\definecolor{orcidlogocol}{HTML}{A6CE39}
\tikzset{orcidlogo/.pic={\fill[orcidlogocol]
svg{M256,128c0,70.7-57.3,128-128,128C57.3,256,0,198.7,0,128C0,57.3,57.3,0,128,
    0C198.7,0,256,57.3,256,128z};
\fill[white] svg{M86.3,186.2H70.9V79.1h15.4v48.4V186.2z}
svg{M108.9,79.1h41.6c39.6,0,57,28.3,57,53.6c0,27.5-21.5,53.6-56.8,
53.6h-41.8V79.1zM124.3,172.4h24.5c34.9,0,42.9-26.5,
42.9-39.7c0-21.5-13.7-39.7-43.7-39.7h-23.7V172.4z}
svg{M88.7,56.8c0,5.5-4.5,10.1-10.1,10.1c-5.6,0-10.1-4.6-10.1-10.1c0-5.6,
4.5-10.1,10.1-10.1C84.2,46.7,88.7,51.3,88.7,56.8z};}}
\newcommand\orcidicon[1]{\href{https://orcid.org/#1}{\mbox{\scalerel*{
\begin{tikzpicture}[yscale=-1,transform shape]
\pic{orcidlogo};
\end{tikzpicture}}{|}}}}
\newcommand{\stkout}[1]{\ifmmode\text{\sout{\ensuremath{#1}}}\else\sout{#1}\fi}
\newtheorem{lemma}{Lemma}[section]
\newtheorem{theorem}{Theorem}[section]
\newtheorem{proposition}{Proposition}[section]
\theoremstyle{definition}
\newtheorem{definition}{Definition}[section]
\newtheorem{assumption}{Assumption}[section]
\newtheorem{hypothesis}{Hypothesis}[section]
\newtheorem{example}{Example}[section]
\theoremstyle{remark}
\newtheorem{remark}{Remark}[section]
\numberwithin{theorem}{section}
\numberwithin{equation}{section}
\crefname{section}{Section}{Sections}
\crefname{subsection}{Section}{Sections}
\crefname{condition}{Condition}{Conditions}
\crefname{hypothesis}{Hypothesis}{Conditions}
\crefname{assumption}{Assumption}{Assumptions}
\crefname{lemma}{Lemma}{Lemmas}
\crefname{fact}{Fact}{Facts}
\Crefname{figure}{Figure}{Figures}
\newcommand{\ttup}[1]{\textup{(}#1\textup{)}}
\newcommand{\vertiii}[1]{{\left\vert\kern-0.25ex\left\vert\kern-0.25ex\left\vert #1
   \right\vert\kern-0.25ex\right\vert\kern-0.25ex\right\vert}}
\newcommand{\lamstr}{\lambda^{\mspace{-2mu}*}}
\newcommand{\cA}{{\mathcal{A}}}  
\newcommand{\bcA}{\boldsymbol{\mathcal{A}}}  
\newcommand{\sB}{{\mathscr{B}}}  
\newcommand{\cC}{{C}}   
\newcommand{\sE}{{\mathscr{E}}}
\newcommand{\sH}{{\mathscr{H}}}  
\newcommand{\sK}{{\mathscr{K}}}  
\newcommand{\Lg}{{\mathcal{L}}}    
\newcommand{\bLg}{\boldsymbol{\mathcal{L}}}    
\newcommand{\Lp}{{L}}            
\newcommand{\cS}{{\mathcal{S}}}
\newcommand{\Lyap}{{\mathcal{V}}}  
\newcommand{\cX}{{\mathcal{X}}}
\newcommand{\RR}{\mathds{R}}
\newcommand{\NN}{\mathds{N}}
\newcommand{\Rd}{{\mathds{R}^{d}}}
\DeclareMathOperator{\Exp}{\mathbb{E}}
\DeclareMathOperator{\Prob}{\mathbb{P}}
\newcommand{\D}{\mathrm{d}}
\newcommand{\E}{\mathrm{e}}
\newcommand{\Ind}{\mathds{1}}   
\newcommand{\Vt}{\Delta}  
\newcommand{\Sob}{{\mathscr W}}    
\newcommand{\Sobl}{{\mathscr W}_{\text{loc}}} 
\newcommand{\Id}{\bm{I}}
\newcommand{\df}{\coloneqq}
\newcommand{\transp}{^{\mathsf{T}}}
\newcommand{\bPsi}{{\boldsymbol\Psi}}
\newcommand{\bPhi}{{\boldsymbol\Phi}}
\DeclareMathOperator*{\diag}{diag}
\DeclareMathOperator*{\trace}{Tr}
\DeclareMathOperator*{\dist}{dist}
\DeclareMathOperator*{\supp}{support}
\DeclareMathOperator{\sign}{sign}
\newcommand{\sorder}{{\mathfrak{o}}}
\newcommand{\grad}{\nabla}
\newcommand{\uuptau}{{\Breve\uptau}}
\newcommand{\abs}[1]{\lvert#1\rvert}
\newcommand{\norm}[1]{\lVert#1\rVert}
\newcommand{\babs}[1]{\bigl\lvert#1\bigr\rvert}
\newcommand{\bnorm}[1]{\bigl\lVert#1\bigr\rVert}
\definecolor{dmagenta}{rgb}{.4,.1,.5}
\definecolor{dblue}{rgb}{.0,.0,.4}
\definecolor{mblue}{rgb}{.0,.0,.7}
\definecolor{ddblue}{rgb}{.0,.0,.4}
\definecolor{dred}{rgb}{.7,.0,.0}
\definecolor{dgreen}{rgb}{.0,.5,.0}
\definecolor{Eeom}{rgb}{.0,.0,.5}
\newcommand{\ttl}{\Large On the monotonicity property of the generalized
eigenvalue\\[5pt] for weakly-coupled cooperative elliptic systems}
\begin{document}
\title[
Eigenvalues of weakly-coupled cooperative elliptic systems]
{\ttl}

\author[Ari Arapostathis]{Ari Arapostathis$^{\dag}$\protect\orcidicon{0000-0003-2207-357X}}
\address{$^{\dag}$Department of ECE,
The University of Texas at Austin,
EER~7.824, Austin, TX~~78712}
\email{ari@utexas.edu}

\author[Anup Biswas]{Anup Biswas$^\ddag$\protect\orcidicon{0000-0003-4796-2752}}
\address{$^\ddag$Department of Mathematics,
Indian Institute of Science Education and Research,
Dr.\ Homi Bhabha Road, Pune 411008, India}
\email{$\lbrace$anup,somnath$\rbrace$@iiserpune.ac.in}


\author[Somnath Pradhan]{Somnath Pradhan$^\ddag$\protect\orcidicon{0000-0002-1470-8240}}

\begin{abstract}
We consider general linear non-degenerate weakly-coupled cooperative elliptic systems and study
certain monotonicity properties of the generalized principal eigenvalue in
$\Rd$ with respect to the potential. It is shown that monotonicity on the right is
equivalent to the recurrence property of the twisted operator which is, in turn,
equivalent to the minimal growth property at infinity of the principal eigenfunctions.
The strict monotonicity property of the principal eigenvalue is shown to be equivalent
with the
exponential stability of the twisted operators. An equivalence between the
monotonicity property on the right and the stochastic representation of the
principal eigenfunction 
is also established.
\end{abstract}
\keywords{Principal eigenvalue, elliptic systems, regime switching diffusions,
monotonicity of eigenvalues, Dirichlet eigenvalue problems}

\subjclass[2000]{Primary 93E20, 60J60}

\maketitle


\section{Introduction}

Regime switching diffusions are heavily used for modelling purposes in applied subjects like
mathematical finance \cite{Merton,BDY09,DKR94,ZY03}, wireless communications \cite{YKI04},
production planning \cite{SZ94}, predictive modelling \cite{GHM10,MH12}. See also the 
introduction
of the book by Yin and Zhu \cite{YZ10} for further motivation in studying regime switching 
diffusions.
Eigenvalue problems for weakly-coupled systems have also received a lot of attention. 
Most of the
existing works in this direction are concerned with
the maximum principle and Dirichlet 
principal eigenvalue
problems in bounded domains, see for instance,
Amann \cite{Amann04}, Birindelli et al.\ \cite{BMS99}, Cantrell and Schmitt 
\cite{Cantrell86}, Cantrell \cite{Cantrell88}, Hess \cite{Hess83}, Sweers \cite{Sweer}.
In this article we
consider the eigenvalue problem in the whole space $\Rd$ and study
its monotonicity properties with respect to 
the potential $\bm{c}$, and provide some sharp characterizations.
Our interest in these problems stems from its applications in risk-sensitive
control problems \cite{ABS19,FM95}. Apart from this, eigenvalue problems are also important 
in understanding
the large deviations behavior \cite{DV76,DVIII,Kaise-06} and
the Fisher-KPP type phenomenon 
\cite{Girardin}. In particular,
given a potential function $\bm{c}$ we consider the 
\textit{exponential-to-integration} 
(or risk-sensitive cost) function
given by
$$\sE(x,i) \,\df\, \limsup_{T\to\infty}\, 
\frac{1}{T}\,\log\Exp_{x,i}\left[\E^{\int_0^T \bm{c}(X_t, S_t)\, \D{t}}\right],$$
where $(X,S)$ represents the regime switching diffusion.
Such functionals are the main object in the study of risk-sensitive controls and large 
deviations phenomena.
It is often important to know under what circumstances we can have $\sE(x,i)=\lamstr$ 
where $\lamstr$ is the
principal eigenvalue of $\bcA=\bLg+\bm{c}$ in $\Rd$ and
$\bLg$ is the extended generator of $(X,S)$. As shown in \cite[Example~3.1]{ABS19} this 
equality does not hold in
general and the concepts of monotonicity (see \cref{D1.3} below)
of the principal eigenvalues were introduced
in \cite{ABS19}
to provide a sufficient condition for this equality to hold. It turns out that the 
concept of monotonicity
is also linked to the criticality of eigenfunctions used in  potential theory (see \cite{ABG19}
and references therein). In this article
we extend the study to weakly-coupled cooperative systems.
The results are
closely related to the works of Ichihara \cite{Ichihara-11,Ichihara-15} where
the eigenvalues of ergodic Hamilton-Jacobi equations are characterized through the 
recurrence/transience
behavior of the diffusion process governed by optimal feedback control.
An important aspect of this paper, is that most of the results are
obtained by analytical methods, instead using probabilistic arguments.
This is made possible by abstracting the notions of regularity, recurrence,
and geometric ergodicity of a diffusion to analogous notions for an operator
(see \cref{D1.2}).
This article
owes much to the work of  Berestycki and Rossi \cite{Berestycki-15} who recently study 
eigenvalue
problems for scalar elliptic equations in unbounded domains and its relation to maximum 
principles. It is also
possible to develop an analogous theory for systems but we do not pursue this direction in 
this article.

\subsection{The model and main results}
Let $\cS\df \{1,2,\dots, N\}$ be a discrete set.
In this paper we consider the generalized eigenvalue problem for weakly-coupled
elliptic systems $\bcA$ on $\Rd\times\cS$ taking the form
\begin{equation}\label{E1.1}
(\bcA\bm{f})_k(x) \,=\, \trace\bigl(a_k(x)\grad^2 f_{k}(x)\bigr)
+ b_k(x)\cdot \grad f_{k}(x) + \sum_{j\in\cS} c_{kj}(x)f_{j}(x)\,,
\quad k\in\{1,2,\dots, N\}\,,
\end{equation}
for $\bm{f} = (f_i)_{i\in\cS}$, with $f_i\in C^2(\Rd)$\,.
Here, $\nabla^2$ denotes the Hessian,
the coefficients $(a_k)_{k\in\cS}$ are continuous, positive definite symmetric matrices.
The coefficients $b_k\colon\Rd\to\Rd$, $k\in\cS$ and
$c_{ij}\colon\Rd\to\RR$, $i,j\in\cS$, are locally bounded and Borel measurable.
We consider cooperative systems, that is, $\bcA$ as above with
$c_{ij}\ge0$ for $i\ne j$. This property is also known as quasi-monotonicity property.
We also impose the following non-degeneracy condition throughout this article:
for each $R>0$, it holds that
\begin{equation*}
\sum_{i,j=1}^{d} a_k^{ij}(x)\zeta_{i}\zeta_{j}
\,\ge\,C^{-1}_{R} \abs{\zeta}^{2} \qquad\forall\, k\in\cS\,, x\in B_{R}\,,
\end{equation*}
and for all $\zeta=(\zeta_{1},\dotsc,\zeta_{d})\transp\in\RR^{d}$.
We prefer to write \cref{E1.1} in a form that has a probabilistic interpretation.
Let $c_k\df \sum_{j\in\cS} c_{kj}$, and the matrix $\bm M= [m_{ij}]_{i,j\in\cS}$
be defined by
\begin{equation*}
m_{ij} \,\df\, c_{ij}\text{\ \ for\ } i\ne j\,,
\quad\text{and\ } m_{ii} \,\df\, -\sum_{j\in\cS,\,j\ne i} c_{ij}\,.
\end{equation*}
Then, for each $x\in\Rd$, $\bm M(x)$ is a stochastic rate matrix of a
finite state Markov process.
We can write \cref{E1.1} in vector form as
\begin{equation*}
\bcA \bm{f} (x) \,=\, \bLg \bm f(x) + \bm{c}\bm{f}(x)\,,\quad x\in\Rd\,,
\end{equation*}
with
\begin{equation}\label{E-bLg}
\bigl(\bLg \bm{f}\bigr)_k (x) \,\df\,
\trace\bigl(a_k(x)\grad^2 f_{k}(x)\bigr)
+ b_k(x)\cdot \grad f_{k}(x) + \sum_{j\in\cS} m_{kj}(x)f_{j}(x)\,,
\quad (x,k)\in\Rd\times\cS\,,
\end{equation}
and
$$(\bm{c}\bm{f})_k \,=\, c_k f_k\,.$$

The operator $\bLg$ in \cref{E-bLg} is the extended generator of
a regime switching diffusion in $\Rd$ (see description in \cref{S-prob}).
One could go one more step further in this representation.
Define the collection $(L_k)_{k\in\cS}$ of elliptic operators on $\Rd$ by
\begin{equation}\label{E-Lk}
L_k g (x) \,\df\, \trace\bigl(a_k(x)\grad^2 g(x)\bigr)
+ b_k(x)\cdot \grad g(x)\,,\quad (x,k)\in\Rd\times\cS\,,\quad g\in C^2(\Rd)\,.
\end{equation}
Then, if we let $\bm{L}\df\diag(L_1,\dotsc,L_N)$, the operator $\bLg$ can be written
in vector notation as
\begin{equation*}
\bLg \bm{f} (x) \,=\, \bm{L}\bm{f}(x) + \bm{M}(x)\bm{f}(x)\,, \quad x\in\Rd\,.
\end{equation*}

Throughout the paper, if $\cX(\Rd)$ is a  space of real-valued functions
on $\Rd$ then we define the corresponding space
$\cX(\Rd\times\cS) \df \bigl(\cX(\Rd)\bigr)^N$, and endow it with the
product topology, if applicable.
Thus, a function $f\in\cX(\Rd\times\cS)$ is identified with the
vector-valued function
\begin{equation}\label{E-vec}
\bm{f} \,\df\, (f_1,\dotsc,f_n)\in\bigl(\cX(\Rd)\bigr)^N\,,\quad\text{where\ }
f_k(\cdot)\df f(\cdot,k)\,,\quad k\in\cS\,.
\end{equation}
With a slight abuse in notation we write $\bm{f}\in\cX(\Rd\times\cS)$.
Naturally, inequalities such as $\bm{f}\ge0$ are meant to hold componentwise.
Also, the product of two functions in $\cX(\Rd\times\cS)$ should be understood
componentwise.

We also identify an irreducibility property of the matrix $\bm{M}$
which is used in many results.

\begin{definition}\label{D1.1}
The matrix $\bm{M}$ is \emph{irreducible} in a bounded domain $D$ if for any non-empty
sets $\cS_1, \cS_2\subset \cS$
satisfying $\cS_1\cap\cS_2=\emptyset$ and $\cS_1\cup\cS_2=\cS$, there exists $i\in\cS_1$
and $j\in\cS_2$ satisfying
\begin{equation}\label{irred}
\babs{\{x\in D\,\colon m_{ij}(x)>0\}}\,>\,0\,.
\end{equation}
We say that $\bm{M}$ is irreducible in an unbounded domain if it is so
in some bounded subdomain.
\end{definition}

Throughout the rest of the paper, with the exception of \cref{T1.1}
we assume that $\bm{M}$ is irreducible on $\Rd$.

For a second order elliptic operator which is the extended generator
of a diffusion process, the ergodic properties of the diffusion
and the corresponding twisted diffusion play a crucial role in the
study of the eigenvalue problem.
This was thoroughly investigated in \cite{AB18,ABS19}.
In this paper, we wish to adopt an analytical approach and avoid, for the most
part, probabilistic arguments.
First, it avoids imposing unnecessary regularity hypotheses on the coefficients
of the operator to ensure the existence of an associated stochastic process.
Second by abstracting probabilistic properties into analytical ones,
the results are not restricted to an operator of the form \cref{E-bLg}
but apply to a much larger class of elliptic operators which satisfies
these properties.
Third, by `translating' the probabilistic arguments into analytical ones,
it makes the arguments more accessible to the pde community, and,
leads to a unified treatment of the problem.

\cref{D1.2} below abstracts the notions of regularity, recurrence,
and geometric ergodicity of a diffusion to analogous notions for an operator.
For nondegenerate diffusions on $\Rd$ the first two abstractions date back to the
work of Hasminskii \cite{Hasminski-60}.
Here, we develop them further.

Before stating this definition, we comment on the Dirichlet problem for the
operator $\bLg$.
Let $\cS_1$ be a nonempty subset of $\cS$, and $D_i$, $i\in\cS_1$, bounded domains
with smooth boundary.
Let
\begin{equation}\label{E-ok}
\Omega\,\df\,\cup_{i\in\cS_1} \bigl(D_i\times\{i\}\bigr)\,,\quad\text{and\ \ }
K\,\df\, \bigl(\overline{\cup_{i\in\cS_1} D_i}\times\cS\bigr)
\setminus \Omega\,.
\end{equation}
If $\bm{g}\colon K\to\RR$ is a continuous function
and $f\in\Lp^p(\Omega)$,
then the Dirichlet problem $\bLg \bm{u} = \bm{f}$ on $\Omega$, and $\bm{f}=\bm{g}$ on $K$ has
a unique solution in
$\bm{u}\in\Sobl^{2,p}(\Omega)\cap C(\Omega\cup K)$.
This is shown in \cref{L2.1} in a slightly different form.

The definition below applies to a general operator $\bLg$ on $C^2(\Rd\times\cS)$,
not necessarily of the form \cref{E-bLg}.
For example, it applies to elliptic operators containing a nonlocal component.

\begin{definition}\label{D1.2}
Let $D\subset\Rd$ be a bounded domain with smooth boundary
and $\cS_1$ a nonempty subset of $\cS$.
We say that $D\times\cS_1$ is \emph{recurrent} (for $\bLg$) if
the Dirichlet problem 
$$\bLg \bm{u} \,=\, 0\quad  \text{in\ } \bigl(\bar{D}\times\cS_1\bigr)^c\,,
\qquad \bm{u} \,=\, \bm{g} \quad\text{in\ }\Bar{D}\times\cS_1$$
for any $\bm{g}\in C(\Bar{D}\times\cS_1)$
has a unique bounded solution
$\bm{u}\in\Sobl^{2,p}\bigl((\bar{D}\times\cS_1)^c\bigr)
\cap C\bigl(\Rd\times\cS)$ for $p>d$.

An operator $\bLg$ on $C^2(\Rd\times\cS)$ is called
\begin{itemize}
\item[(i)]
\emph{regular} if
the equation $\bLg \bm{u} = \bm{C}\, \bm{u}$ has no bounded positive solution for any 
constant vector $\bm{C}>0$.

\item[(ii)]
\emph{recurrent} if every set of the form $B_r(x_0)\times\cS_1$, with
nonempty $\cS_1\subset\cS$, is recurrent.

\item[(iii)]
\emph{exponentially stable} if it is regular
and there exists a $\bm{\Lyap}\in\Sobl^{2,p}(\Rd\times\cS)$, $p>d$, with $\bm{\Lyap}\ge 1$
and positive constants $\kappa_0$ and $\kappa_1$ such that
\begin{equation}\label{ED1.2A}
\bLg\bm\Lyap \,\le\, \kappa_0\Ind_{K\times\cS} -\kappa_1 \bm\Lyap\quad 
\text{in\ } \Rd\times\cS\,,
\end{equation}
for some compact set $K$.
\end{itemize}
\end{definition}

In this article we shall refer to $\bm\Lyap$ as \emph{Lyapunov function}. 
\cref{D1.2}\,(i) is also known as $L^\infty$-Liouville property and is a key 
property in the study of stochastic completeness of Riemannian manifolds \cite{GA99,PRS05}.
The above definitions are motivated from the associated probabilistic model as
can be seen from the  \cref{R1.1} below.
As mentioned earlier, we do not impose any
regularity hypotheses on the coefficients to ensure existence of a stochastic process
corresponding to the extended generator $\bLg$.

We state two versions of the strong maximum principle for the operator $\bLg$ which
we use often.
These do not require irreducibility of $\bm{M}$.

\begin{enumerate}[(P2a)]
\item[\hypertarget{P2a}{(P2a)}]
Suppose that $\bm{u}\in\Sobl^{2,d}(D\times\cS)$ satisfies
$(\bLg\bm{u})_{\Hat\imath} - c u_{\Hat\imath}\le 0$ on a domain
$D$ for some constant $c\ge0$ and $\Hat\imath\in\cS$.
Then $(x,i)\mapsto u(x,i)$ cannot attain a negative minimum in $D\times\{\Hat\imath\}$.

\medskip
\item[\hypertarget{P2b}{(P2b)}]
Let $\Omega$ and $K$ be as in \cref{E-ok}.
Suppose that $\bm{u}\in\Sobl^{2,d}(\Omega)\cap C(\Omega\cup K)$ is nonnegative on $K$ and
satisfies $(\bLg\bm{u})_i - cu_i\le 0$ on $\Omega$ for some constant
$c\ge0$ and $i\in\cS_1$.
Then $u_i$ cannot have a nonpositive local minimum in $D_i$ unless it is
equal to a constant.
\end{enumerate}

\begin{remark}\label{R1.1}
Suppose that $(X,S)$ is a (local) regime switching diffusion corresponding to the generator 
$\bLg$ 
where $X=\{X_t\}_{t\ge0}$ represents the diffusion component and $S=\{S_t\}_{t\ge0}$ represents the finite state
Markov process taking values in $\cS$. In particular, $(X, S)$ solves the associated local 
martingale problem. In fact, if $\bm{a}$ has at most quadratic growth, $\bm{b}$
has at most linear growth and $\bm{M}$ is bounded, then the
corresponding martingale problem is well-posed (this follows combining \cite{XZ18},
\cite[Theorem~5.2]{Komatsu} and \cite[Theorem~5.2]{DWS75}).
Let $B_n$ denote the ball of
radius $n$ around $0$, and $\uptau_n$
the first exit time from $B_n\times\cS$, that is,
$$\uptau_n\,\df\,\inf\,\{t>0\,\colon X_t\notin B_n\}\,.$$
The diffusion $(X, S)$ is said to be regular 
\cite[Definition~2.6]{YZ10} if 
for all $(x,i)\in\Rd\times\cS$ we have $\Prob_{x,i}(\lim_{n\to\infty} 
\uptau_n<\infty)=0$. Using the It\^{o}-Krylov formula \cite[p.~122]{Krylov} it is easily seen that 
$\bm{u}_n(x,i) = \Exp_{x,i}[e^{-C \uptau_n}]$ solves the Dirichlet problem
\begin{equation}\label{E-diraux}
\bLg \bm{u}_n \,=\, C \bm{u}_n\quad \text{in\ } B_n\times\cS\,,
\quad \text{and\ \ } \bm{u}_n =1 \quad \text{on\ } \partial B_n\times\cS\,.
\end{equation}
Suppose that $\bm{v}$ is a bounded positive solution of $\bLg\bm{v}=C\bm{v}$.
Without loss of generality, assume that $\bm{v}\le\bm{1}$.
By the strong maximum principle, $\bm{u}_n\ge\bm{v}$ in $B_n$.
Thus taking limits it follows that $\Exp_{x,i}[e^{-C \uptau_\infty}]>0$,
where $\uptau_\infty=\lim_{n\to\infty}\uptau_n$,
from which it is straightforward to deduce that $\uptau_\infty<\infty$
with positive probability. Thus the diffusion cannot be regular.
Conversely, if the diffusion is not regular, then
using standard elliptic pde estimates it can be easily shown that
$\bm{u} = \lim_{n\to\infty}\,\bm{u}_n$ is a
solution of 
$$\bLg \bm{u} \,=\, C \bm{u}\quad \text{in\ } \Rd\times\cS\,.$$
Since $\Prob(\uptau_\infty<\infty)>0$ by hypothesis,
it follows that $\bm{u}(x,i) = \Exp_{x,i}[e^{-C \uptau_\infty}]>0$,
and hence this solution
is bounded and positive.
Thus the regularity of $(X,S)$ is equivalent to the regularity
of $\bLg$ in \cref{D1.2}\,(i).
The above argument is due to Hasminskii \cite[Lemma~4.1]{Hasminski-60}.

Next, we show that nonexistence of positive bounded  solution to
$\bLg\bm{u}=\bm{u}$ characterizes regularity.

\begin{lemma}
Suppose that, for all large enough $n\in\NN$, the operator $\bLg$ satisfies the strong maximum principle
\hyperlink{P2a}{\rm(P2a)}
on $B_n\times\cS$ and the Dirichlet problem in \cref{E-diraux} has a unique solution
for every $C>0$.
Then $\bLg$ is regular if and only if
the equation $\bLg\bm{u}=\bm{u}$ has no  positive bounded solution.
\end{lemma}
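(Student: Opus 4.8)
The plan is to prove the two implications separately. If $\bLg$ is regular, then taking the constant vector $\bm{C}=\bm{1}$ in \cref{D1.2}\,(i) shows at once that $\bLg\bm{u}=\bm{u}$ has no positive bounded solution. For the converse I argue by contraposition: assuming $\bLg$ is not regular, I construct a positive bounded solution of $\bLg\bm{u}=\bm{u}$ from the auxiliary Dirichlet solutions. For $c>0$ let $\bm{u}^{(c)}_n$ be the unique solution of \cref{E-diraux} with $C=c$. Since $\bm{M}$ has zero row sums and the operators $L_k$ carry no zeroth-order term, $\bLg\bm{1}=\bm{0}$; applying \hyperlink{P2a}{\textup{(P2a)}} to $\bm{u}^{(c)}_n$ and to $\bm{1}-\bm{u}^{(c)}_n$ gives $0\le\bm{u}^{(c)}_n\le\bm{1}$, while comparing $\bm{u}^{(c)}_n$ with $\bm{u}^{(c)}_{n+1}$, and $\bm{u}^{(c_1)}_n$ with $\bm{u}^{(c_2)}_n$, via \hyperlink{P2a}{\textup{(P2a)}} shows that $n\mapsto\bm{u}^{(c)}_n$ and $c\mapsto\bm{u}^{(c)}_n$ are both nonincreasing. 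By the interior $\Sob^{2,p}$ estimates and the Sobolev embedding, the pointwise limit $\bm{u}^{(c)}\df\lim_{n\to\infty}\bm{u}^{(c)}_n$ belongs to $\Sobl^{2,p}(\Rd\times\cS)$, satisfies $0\le\bm{u}^{(c)}\le\bm{1}$ and $\bLg\bm{u}^{(c)}=c\,\bm{u}^{(c)}$ in $\Rd\times\cS$, and $c\mapsto\bm{u}^{(c)}$ remains nonincreasing.

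The next step is a maximality statement. Let $\bm{v}$ be any positive bounded solution of $\bLg\bm{v}=\bm{C}\bm{v}$ with $\bm{C}>0$ a constant vector; scaling, we may take $\bm{v}\le\bm{1}$. Put $c_*\df\min_{k\in\cS}C_k>0$. Then $(\bLg\bm{v})_k-c_*v_k=(C_k-c_*)v_k\ge0$, so $\bm{u}^{(c_*)}_n-\bm{v}$ satisfies $\bigl(\bLg(\bm{u}^{(c_*)}_n-\bm{v})\bigr)_k-c_*\bigl(\bm{u}^{(c_*)}_n-\bm{v}\bigr)_k\le0$ on $B_n\times\cS$ and equals $\bm{1}-\bm{v}\ge0$ on $\partial B_n\times\cS$; by \hyperlink{P2a}{\textup{(P2a)}} it is $\ge0$, whence $\bm{v}\le\bm{u}^{(c_*)}$, and in particular $\bm{u}^{(c_*)}>0$ in $\Rd\times\cS$. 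Conversely, whenever $\bm{u}^{(c)}>0$ it is itself a positive bounded solution of $\bLg\bm{u}=c\,\bm{u}$, and $c\bm{1}$ is a constant vector that is $>0$. Thus, setting $G\df\{c>0:\bm{u}^{(c)}>0\text{ in }\Rd\times\cS\}$, we conclude that $G\ne\emptyset$ if and only if $\bLg$ is not regular, and $1\in G$ if and only if $\bLg\bm{u}=\bm{u}$ has a positive bounded solution.

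It therefore suffices to show that $G$, if nonempty, equals $(0,\infty)$. Since $c\mapsto\bm{u}^{(c)}$ is nonincreasing, $G$ is a down-set, so it is enough to prove $c\in G\Rightarrow 2c\in G$, which I do by establishing $\bm{u}^{(2c)}_n\ge\bigl(\bm{u}^{(c)}_n\bigr)^{2}$ on $B_n\times\cS$ (with the square taken componentwise). Writing $\bm{p}\df\bm{u}^{(c)}_n$, a direct computation gives $L_k(p_k^{2})=2p_kL_kp_k+2(\grad p_k)\transp a_k\grad p_k\ge 2p_kL_kp_k$, and substituting $L_kp_k=cp_k-\sum_{j}m_{kj}p_j$ and using $\sum_{j}m_{kj}=0$ yields
\[
\bigl(\bLg(\bm{p}^{2})\bigr)_k \,\ge\, 2c\,p_k^{2}+\sum_{j\ne k}m_{kj}(p_j-p_k)^{2} \,\ge\, 2c\,p_k^{2}\,,
\]
where the final inequality is exactly where the cooperativity $m_{kj}\ge0$ ($j\ne k$) is used. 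Hence $\bm{p}^{2}$ is a subsolution of $\bLg\bm{w}-2c\,\bm{w}=0$ equal to $\bm{1}$ on $\partial B_n\times\cS$, so $\bm{u}^{(2c)}_n-\bm{p}^{2}$ has the sign required by \hyperlink{P2a}{\textup{(P2a)}} and vanishes on $\partial B_n\times\cS$, giving $\bm{u}^{(2c)}_n\ge\bm{p}^{2}$; letting $n\to\infty$, $\bm{u}^{(2c)}\ge(\bm{u}^{(c)})^{2}$, so $2c\in G$. A nonempty $G$ is thus a down-set stable under doubling, hence $G=(0,\infty)$; combining this with the previous paragraph, $\bLg$ is not regular if and only if $1\in G$ if and only if $\bLg\bm{u}=\bm{u}$ has a positive bounded solution, and taking contrapositives completes the proof.

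I expect the doubling inequality $\bm{u}^{(2c)}_n\ge(\bm{u}^{(c)}_n)^{2}$ to be the crux: it is the only place where the cooperative (quasi-monotone) structure is genuinely needed, through the nonnegativity of $\sum_{j\ne k}m_{kj}(p_j-p_k)^{2}$, and the accompanying algebra must be carried out with care. A secondary point requiring attention throughout is the bookkeeping of signs, since \hyperlink{P2a}{\textup{(P2a)}} controls negative interior minima but not positive maxima, so in each comparison one must apply it to the correctly signed combination; checking that $\bm{p}^{2}\in\Sobl^{2,p}$ so that \hyperlink{P2a}{\textup{(P2a)}} applies, and the elliptic-estimate passage to the limit, are routine.
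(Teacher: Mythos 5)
Your proof is correct, and while its overall architecture (monotone Dirichlet approximants on $B_n\times\cS$, comparisons via \hyperlink{P2a}{(P2a)}, and a doubling induction to climb from some $\kappa>0$ up to $C=1$) matches the paper's, the crux is handled by a genuinely different device. The paper proves the doubling step by contradiction: assuming $\bLg\bm{u}=2\kappa\bm{u}$ has no positive bounded solution, it forces the Dirichlet approximants to tend to zero pointwise and then derives a contradiction with \hyperlink{P2a}{(P2a)} at a near-minimum of $\bm{u}_n-\bm{v}$, where $\bm{v}$ is the given bounded positive subsolution. You instead prove the doubling constructively via the componentwise inequality $\bm{u}^{(2c)}_n\ge(\bm{u}^{(c)}_n)^2$, whose verification $\bigl(\bLg(\bm{p}^2)\bigr)_k\ge 2c\,p_k^2+\sum_{j\ne k}m_{kj}(p_j-p_k)^2$ is exactly right (the cross terms are absorbed using $\sum_j m_{kj}=0$, and the two discarded terms are nonnegative precisely by ellipticity of $a_k$ and cooperativity $m_{kj}\ge0$). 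This buys a quantitative lower bound on $\bm{u}^{(2c)}$ rather than mere nonvanishing, isolates exactly where the structural hypotheses enter, and is the analytic shadow of the Cauchy--Schwarz inequality $\bigl(\Exp_{x,i}[\E^{-c\uptau_n}]\bigr)^2\le\Exp_{x,i}[\E^{-2c\uptau_n}]$ for the probabilistic representation of the Dirichlet solutions; the cost is that it is tied to the specific second-order form \cref{E-bLg}, whereas the paper's contradiction argument uses only \hyperlink{P2a}{(P2a)} and solvability of \cref{E-diraux} and so applies verbatim to the more general operators (e.g.\ with nonlocal components) the authors have in view. Your bookkeeping of the remaining comparisons ($0\le\bm{u}^{(c)}_n\le\bm{1}$, monotonicity in $n$ and $c$, maximality of $\bm{u}^{(c_*)}$ over bounded positive solutions of $\bLg\bm{v}=\bm{C}\bm{v}$ with $c_*=\min_k C_k$) is sound, and the down-set-plus-doubling conclusion $G=(0,\infty)$ closes the argument.
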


\begin{proof}
Necessity follows from \cref{D1.2}\,(i).
To show sufficiency. we argue as follows.
First, suppose that $\bm{v}<\bm{1}$ is a bounded positive subsolution
of $\bLg\bm{v}\ge\kappa\bm{v}$ for some $\kappa>0$.
Let $C\in(0,\kappa]$. The solution $u_n$ of \cref{E-diraux}
satisfies
$$\bLg(\bm{u}_n-\bm{v}) - C (\bm{u}_n-\bm{v}) \,\le\,(C-\kappa)\bm{v}\,\le\,0
\quad\text{in\ } B_n\,,$$
and thus we must have
$\bm{u}_n>\bm{v}$ on $B_n$ by the strong maximum principle.
Taking limits as $n\to\infty$, we deduce that $\bLg\bm{u}=C\bm{u}$ must have
a positive bounded solution for any $C\in(0,\kappa]$.
To finish the proof, it suffices to show that $\bLg\bm{u}\ge2\kappa\bm{u}$ has
a positive bounded solution.
We argue by contradiction. Suppose that this equation has no positive bounded solution.
Consider the Dirichlet problems
$\bLg \bm{u}_n = 2\kappa \bm{u}_n$ in $B_n\times\cS$ with $\bm{u}_n=\norm{\bm{v}}_\infty$ on
$\partial{B}_n\times\cS$.
According to the hypothesis $\bm{u}_n$ must converge pointwise to $0$ as $n\to\infty$.
Let $u_{i,n} \df (\bm{u}_n)_i$ for $i\in\cS$.
Then, for all large enough $n$, we must have that
\begin{equation}\label{PL1.1A}
\inf_{B_n}\; \min_{i\in\cS}\;(u_{i,n}-v_{i})\,\le\, -\tfrac{2}{3} \norm{\bm{v}}_\infty\,.
\end{equation}
Hence if $(x_n,\Hat\imath)\in B_n\times\cS$ is a point where $\min_{i\in\cS}\,(u_{i,n}-v_{i})$
attains its infimum in $B_n$,
it follows from \cref{PL1.1A} that
\begin{equation}\label{PL1.1B}
2u_{n,\Hat\imath}(x_n)-v_{\Hat\imath}(x_n) \,\le\,
v_{\Hat\imath}(x_n) - \frac{4}{3}\norm{\bm{v}}_\infty\,<\,0\,.
\end{equation}
Since $\bigl(\bLg (\bm{u}_n-\bm{v})\bigr)_{\Hat\imath}
\le \kappa (2u_{n,\Hat\imath}-v_{\Hat\imath})$
in $B_n$, \cref{PL1.1B} shows that
$\bigl(\bLg (\bm{u}_n-\bm{v})\bigr)_{\Hat\imath}<0$ in some open neighborhood of
$x_n$ which contradicts \hyperlink{P2a}{\rm(P2a)}.
\end{proof}

We can also relate \cref{D1.2}\,(ii) with the recurrence property of a process
$(X, S)$ associated with $\bLg$.
Recall that a bounded domain $D\times\cS_1$ is said to be recurrent
for the process $(X,S)$ if $\Prob_{x,i}(\uuptau(D\times\cS_1)<\infty)=1$ for all
$(x,i)\in(D\times\cS_1)^c$ where $\uuptau(A)$ denotes the first hitting time 
of the set $A$, that is,
$$\uuptau(A)\,\df\,\inf\{t>0\,\colon (X_t, S_t)\in A\}\,.$$
Also, a process $(X,S)$ is said to be recurrent if
any such domain $D\times\cS_1$ is recurrent.
With the help of It\^{o}-Krylov formula it can be easily checked that 
$\bm{u}(x,i)=\Prob_{x,i} (\uuptau(D\times\cS_1)<\infty)$ is a solution to
$$\bLg \bm{u}=0\quad  \text{in\ } (\bar{D}\times\cS_1)^c,\quad
\bm{u}=1\quad \text{in\ } \Bar{D}\times\cS_1.$$
One can easily relate the recurrence of $(X, S)$ with \cref{D1.2}\,(ii).
This is also discussed in \cite{Hasminski-60}.
We want to note here that the recurrence properties depend on the irreducibility of $\bm{M}$
in \cref{D1.1}.
If $\bm{M}$ is irreducible on $\Rd$, then for the process  $(X,S)$ to be recurrent
it suffices that any bounded domain $D\times\cS_1$ is recurrent.
Also, we have a dichotomy: the process is either recurrent of \emph{transient}.
However, if $\bm{M}$ is reducible on $\Rd$, this dichotomy does not hold.
For a counterexample see \cref{Ex1.1} below.
\end{remark}

The following example shows that exponential stability as in
\cref{D1.2}\,(iii) does not immediately imply recurrence, unless
$\bm{M}$ is irreducible on $\Rd$.

\begin{example}\label{Ex1.1}
Let $\cS=\{1,2\}$, and dynamics given by
\begin{align*}
\D{X}_1(t) &\,=\, \sign(X_{1}(t)) + \sqrt2\, \D{W}_1 (t)\,,\\
\D{X}_2(t) &\,=\, -X_{2}(t) + \sqrt2\, \D{W}_2 (t)\,.
\end{align*}
Here, $X_1$ and $X_2$ are $1$-dimensional It\^o processes.
Suppose that $\bm{M}=0$ on $B_2\times\cS$, while on
$B_2^c\times\cS$ we have
$m_{11}=-\delta$, $m_{12}=\delta$, $m_{21}=-m_{22}=0$, for some $\delta>0$.
It can be seen, by using the Lyapunov function $\Lyap_2(x) = \frac{x^2}{1+x^2}$ and
$\Lyap_1(x) = 3 - \Lyap_2{x}$, the switched diffusion is exponentially stable
according to \cref{D1.2}\,(iii).
However, it is clear that the set $B\times\{1\}$ cannot be recurrent.
The lack of irreducibility of $\bm{M}$ is responsible for this.
On the other hand, the set $B\times\cS$ is recurrent.
If we modify $\bm{M}$, and let $m_{21}=-m_{22}=\epsilon$ in
$B_2^c\times\cS$ for some $\epsilon>0$,
then the Lyapunov equation \cref{ED1.2A} still holds,
and $B\times\{1\}$ is now recurrent.
\end{example}

\Cref{T1.1} below, concerns the relations among (i)--(iii) in \cref{D1.1}.
These are well-known if $\bLg$ is the extended generator of a stochastic
process 
$(X,S)$ and $\bm{M}$ is irreducible on $\Rd$ (cf. \cite{book,Pinsky,YZ10}).
Our objective though is to provide analytical proofs in a very general setting
without using the probabilistic structure.
The results apply to any elliptic operator $\bLg$ satisfying
the strong maximum principle in \hyperlink{P2a}{(P2a)} or
\hyperlink{P2b}{(P2b)} and for which the Dirichlet problem on
a bounded domain $D\times\cS_1$ has a solution.

\begin{theorem}\label{T1.1}
The following hold.
\begin{itemize}
\item[(a)] A recurrent operator $\bLg$ is regular.
\item[(b)] Provided that $\bm{M}$ is irreducible on $\Rd$,
an exponentially stable operator $\bLg$ is recurrent.
\item[(c)] Irrespective of the irreducibility properties of $\bm{M}$,
if $\bLg$ is exponentially stable, then any bounded domain of the
form $D\times\cS$ is recurrent.
\end{itemize}
\end{theorem}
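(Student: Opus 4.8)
The plan is to establish the three assertions in the order \ttup{a}, \ttup{c}, \ttup{b}, since the argument for \ttup{b} will invoke \ttup{c}. For \ttup{a} I would argue by contradiction: suppose $\bLg$ is recurrent but there is a bounded $\bm u>\bm 0$ with $\bLg\bm u=\bm C\bm u$ for a constant vector $\bm C>\bm 0$, and set $c_*\df\min_{k\in\cS}C_k>0$; after scaling, assume $\sup\bm u=1$. Using the solvability of the Dirichlet problems in \cref{E-diraux} and the maximum principle \hyperlink{P2a}{\textup{(P2a)}}, I would build a maximal bounded solution of $\bLg\bm v=c_*\bm v$ on $\Rd\times\cS$ as the decreasing limit of the solutions $\bm v_n$ of $\bLg\bm v_n=c_*\bm v_n$ in $B_n\times\cS$ with $\bm v_n=\bm 1$ on $\partial B_n\times\cS$; one checks via \hyperlink{P2a}{\textup{(P2a)}} that $\bm u\le\bm v_n\le\bm 1$ and $\bm v_{n+1}\le\bm v_n$, and standard interior estimates give a limit $\bm v$ with $\bm 0<\bm u\le\bm v\le\bm 1$ and $\sup\bm v=1$. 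Put $\bm h\df\bm 1-\bm v\ge\bm 0$, so $\bLg\bm h=-c_*\bm v\le\bm 0$ and $\inf_{\Rd\times\cS}\bm h=0$. Recurrence enters exactly here: for any ball $B'$, comparing the bounded supersolution $\bm h$ with the solutions of the exterior Dirichlet problem for $B'\times\cS$ through an annular exhaustion and \hyperlink{P2a}{\textup{(P2a)}}, and using that recurrence forces the relative harmonic measures of $\partial B'$ to increase to $\bm 1$, yields $\bm h\ge\min_{\bar B'\times\cS}\bm h$ on $\Rd\times\cS$; letting $B'$ shrink to a point gives $\inf_{\Rd\times\cS}\bm h=\min_{k}h_k(y)$ for every $y$, so $\bm h$ attains its infimum, say $h_{k_0}(y_0)=0$. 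Then $h_{k_0}$ has a nonpositive local minimum, so \hyperlink{P2b}{\textup{(P2b)}} (with the singleton $\{k_0\}$ on a small ball) forces $h_{k_0}\equiv 0$, hence $v_{k_0}\equiv 1$, there; but then $L_{k_0}v_{k_0}\equiv 0$ and, since $\sum_j m_{k_0j}=0$ with $m_{k_0j}\ge 0$ and $v_j\le 1$ for $j\ne k_0$, $(\bLg\bm v)_{k_0}=\sum_{j\ne k_0}m_{k_0j}(v_j-1)\le 0$, contradicting $(\bLg\bm v)_{k_0}=c_*v_{k_0}=c_*>0$.

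For \ttup{c} I would first reduce to $\bar D\supset K$ by noting that recurrence is monotone under inclusion: if $\bar D_1\subseteq\bar D_2$ and $\bar D_2\times\cS$ is recurrent then so is $\bar D_1\times\cS$. This is the scheme of \ttup{a} again — a bounded $\bLg$-harmonic $\bm\psi$ vanishing on $\bar D_1\times\cS$ attains its supremum on $\bar D_2\times\cS$ (compare with the constant $\sup_{\bar D_2\times\cS}\bm\psi\cdot\bm 1$, using recurrence of $\bar D_2\times\cS$), and then \hyperlink{P2b}{\textup{(P2b)}} together with the connectedness of $(\bar D_1)^c$ propagates $\psi_{k_0}\equiv\sup\bm\psi$ up to $\partial D_1$, forcing $\sup\bm\psi=0$; similarly $\inf\bm\psi=0$. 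So fix $\bar D\supset K$. Existence of a bounded solution of the exterior Dirichlet problem is elementary: since $\bLg\bm 1=\bm 0$, the solutions on $(B_n\setminus\bar D)\times\cS$ with data $\bm g$ on $\partial D$ and $\bm 0$ on $\partial B_n$ are bounded by $\norm{\bm g}_\infty$ and, for $\bm g\ge\bm 0$, increase to a bounded solution; the general case follows by linearity. The content is uniqueness, which amounts to the decay $\bm u_n\to\bm 0$ of the relative harmonic measures ($\bLg\bm u_n=\bm 0$ in $(B_n\setminus\bar D)\times\cS$, $\bm u_n=\bm 0$ on $\partial D$, $\bm u_n=\bm 1$ on $\partial B_n$), because any bounded solution $\bm\psi$ of the homogeneous exterior problem satisfies $\abs{\bm\psi}\le(\sup\abs{\bm\psi})\,\bm u_n$ on the annuli by \hyperlink{P2a}{\textup{(P2a)}}. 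To obtain $\bm u_n\to\bm 0$ I would use the Lyapunov function: the solutions $\bm f_n$ of $\bLg\bm f_n=-\bm 1$ in $(B_n\setminus\bar D)\times\cS$ with zero data obey $\bm 0\le\bm f_n\le\kappa_1^{-1}\bm\Lyap$ by comparison with $\kappa_1^{-1}\bm\Lyap$, which satisfies $\bLg(\kappa_1^{-1}\bm\Lyap)\le-\bm\Lyap\le-\bm 1$ on $K^c\times\cS\supseteq(\bar D)^c\times\cS$; combining this uniform control of the ``return cost'' with regularity of $\bLg$ — which by the lemma in \cref{R1.1} makes the Dirichlet solutions of \cref{E-diraux} vanish pointwise in the limit, so that reaching $\partial B_n$ within any fixed time scale becomes negligible — and splitting $\bm u_n$ over a threshold parameter that is then sent to infinity, one concludes $\bm u_n\to\bm 0$. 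I expect this last step to be the main obstacle: turning the supermartingale/return-time heuristic underlying the Lyapunov criterion into a purely analytic estimate, and making it work even when $\bm\Lyap$ is bounded (as it may be, e.g.\ for Ornstein--Uhlenbeck-type drifts), is the delicate point.

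For \ttup{b}, granting \ttup{c}, let $D'=B_r(x_0)$ and $\emptyset\ne\cS_1\subseteq\cS$; existence of a bounded solution of the exterior problem is elementary as above, so only uniqueness remains. Let $\bm\psi$ be a bounded solution of $\bLg\bm\psi=\bm 0$ on $(\bar D'\times\cS_1)^c$ vanishing on $\bar D'\times\cS_1$, put $M\df\sup\abs{\bm\psi}$, and after a sign change assume $\sup\bm\psi=M$. Choosing a large ball $D\supset\bar D'$, part \ttup{c} gives recurrence of $\bar D\times\cS$, whence (comparison with a constant, as in \ttup{a}) $\sup\bm\psi$ is attained on $\bar D\times\cS$, say at $(x_0,k_0)$. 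If $M>0$, this point lies in $(\bar D'\times\cS_1)^c$, so $M\bm 1-\bm\psi\ge\bm 0$ has a zero there and \hyperlink{P2b}{\textup{(P2b)}} forces $\psi_{k_0}\equiv M$ on a ball; the set $\{\psi_{k_0}=M\}$ is then open and closed in the connected slice on which the equation holds, hence equals it, and using the cooperative structure, the ellipticity of the $L_k$, and the irreducibility of $\bm M$ on $\Rd$ one propagates $\psi_j\equiv M$ to all $j\in\cS$ and all of $\Rd$; since $\bm\psi=\bm 0$ on the nonempty set $\bar D'\times\cS_1$ this gives $M=0$, a contradiction, so $\bm\psi\equiv\bm 0$. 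The propagation step is the only place the irreducibility of $\bm M$ is genuinely used — consistent with \cref{Ex1.1}, where \ttup{b} fails without it even though \ttup{c} holds.
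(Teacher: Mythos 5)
Your part \ttup{a} is essentially the paper's argument: the paper applies the Liouville property of recurrent operators (\cref{P2.1}) directly to $-\bm u$ to conclude that $\bm u$ attains its maximum and that the maximizing component is constant, then derives the same contradiction $0<C_iu_i=\sum_{j\ne i}m_{ij}(u_j-u_i)\le 0$; your detour through the maximal solution $\bm v$ and $\bm h=\bm 1-\bm v$ is a harmless repackaging of the same mechanism. Your ordering \ttup{c}$\Rightarrow$\ttup{b} is also legitimate and close in spirit to the paper, which proves \ttup{b} in three steps (uniqueness for large holes $B\Supset K$ with $\cS_1=\cS$; monotonicity of uniqueness under enlarging the hole; reduction to small balls and singletons $\{i\}$ using irreducibility) and obtains \ttup{c} as the special case not requiring the last step. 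Your propagation argument in \ttup{b} is the paper's Step~3 in sketch form, though you should note that the paper first shrinks the hole so that $\bm M$ is irreducible on its complement, a point your version glosses over.

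The genuine gap is exactly where you flag it: the uniqueness of the bounded solution to the exterior Dirichlet problem for a hole $D\times\cS$ with $\bar D\supset K$. Your proposed route --- bounding the solutions of $\bLg\bm f_n=-\bm 1$ by $\kappa_1^{-1}\bm\Lyap$ and then deducing decay of the harmonic measures $\bm u_n$ by a time-splitting argument --- is a probabilistic supermartingale heuristic that you do not convert into an analytic estimate, and this is the only place in the whole theorem where the Lyapunov inequality \cref{ED1.2A} must actually do work. The paper closes this step much more directly, without ever proving $\bm u_n\to 0$: if $\bm u$ is a bounded solution with zero data and $\sup_j\sup_{B^c}u_j>0$, pick $n$ large so that $\bm\phi\df\bm u-\tfrac1n\bm\Lyap$ is positive at some point; then
\begin{equation*}
\bLg\bm\phi \,=\, -\tfrac1n\,\bLg\bm\Lyap \,\ge\, \tfrac{\kappa_1}{n}\,\bm\Lyap \,>\, \kappa\,\bm\phi
\quad\text{in\ } B^c\times\cS
\end{equation*}
for $\kappa>0$ small, since $\bm\Lyap\ge 1$ and $\bm\phi$ is bounded. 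Comparing $\bm\phi$ with the solutions $\bm w_n$ of $\bLg\bm w_n=\kappa\bm w_n$ in $B_n\times\cS$ with constant boundary data $\lVert\bm\phi^+\rVert_\infty$ (via \cref{L2.1}) and letting $n\to\infty$ produces a bounded, nonnegative, somewhere-positive --- hence, by the strong maximum principle, positive --- solution of $\bLg\bm v=\kappa\bm v$ on $\Rd\times\cS$, contradicting the regularity of $\bLg$, which is part of exponential stability by \cref{D1.2}\,(iii). This argument works whether or not $\bm\Lyap$ is bounded, which is precisely the case you worried about. You should replace your harmonic-measure sketch with this construction; with that substitution the rest of your plan goes through.
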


The proof of \cref{T1.1} is in \cref{S2}.

As already mentioned, this paper is devoted to the study of the
generalized principal eigenvalue $\lamstr $ in $\Rd$ of $\bm\cA$.
Let
\begin{equation*}
\Uppsi^+(\lambda) \,\df\, \bigl\{\bm{f}\in \Sobl^{2,d}(\Rd\times\cS)
\,\colon\, \bm{f}> 0 \,,
\ \bcA\,\bm{f} + \lambda\,\bm{f} \le\, 0 \;\text{\ in\ } \Rd \bigr\}\,,\quad 
\lambda\in\RR\,.
\end{equation*}
The principal eigenvalue $\lamstr$ is defined as
\begin{equation}\label{E1.9}
\lamstr  \,\df\, \sup\,\bigl\{\lambda\in\RR\,\colon\,
\Uppsi^+(\lambda)\ne \varnothing\bigr\}\,.
\end{equation}
We refer to the parameter $\bm{c}$ as the \emph{potential} and, when needed,
we indicate the dependence of $\lamstr$ on $\bm{c}$ explicitly in the notation
by writing $\lamstr(\bm c)$. Some early works on generalized principal eigenvalue
for scalar elliptic equation appeared in Protter--Weinberger \cite{PW66}, Nussbaum 
\cite{N84} and Nussbaum--Pinchover \cite{NP92}.
Generalized eigenvalues and its relation to maximum principles in bounded domains are 
established in the seminal work
of Berestycki--Nirenberg--Varadhan \cite{Berestycki-94}. Later, this was extended to
more general 
operators. Recently, Berestycki--Rossi \cite{Berestycki-15} studied the principal eigenvalue 
problem for scalar elliptic operators
in unbounded domains and established several interesting properties (see also \cite{NP92}).

We say that a constant $\lambda\in\RR$ and a positive $\bPsi\in\Sobl^{2,p}(\Rd\times\cS)$, $p>d$, 
solve the eigenvalue equation for $\bcA$, if
$\bcA\,\bPsi =-\lambda\,\bPsi$.
In such a case we call $\bPsi$ the eigenfunction
and $(\bPsi,\lambda)$ the \emph{eigenpair}.
An eigenfunction is always meant to be a positive function.
The theorem which follows is also a generalization of \cite[Theorem~1.4]{Berestycki-15}.

\begin{theorem}\label{T1.2}
For any $\lambda\le\lamstr$ there exists $\bPsi\in\Sobl^{2,p}(\Rd\times\cS)$,
$\bPsi> 0$, satisfying
$\bm\cA\bPsi = -\lambda\bPsi$ in $\Rd\times\cS$.
\end{theorem}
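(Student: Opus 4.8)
The plan is to prove \cref{T1.2} by the standard exhaustion (Dirichlet-on-expanding-balls) method, exploiting the fact that for $\lambda \le \lamstr$ the set $\Uppsi^+(\lambda)$ is nonempty and hence there is a positive supersolution available as a barrier. First I would fix $\lambda \le \lamstr$; if $\lambda < \lamstr$ then $\Uppsi^+(\lambda) \ne \varnothing$ by definition of $\lamstr$ as a supremum, and if $\lambda = \lamstr$ then $\Uppsi^+(\lambda')\ne\varnothing$ for all $\lambda' < \lamstr$ and one recovers the boundary case in the limit. So I would first treat $\lambda < \lamstr$, pick $\bm{g} \in \Uppsi^+(\lambda)$, i.e.\ $\bm g > 0$ with $\bcA \bm g + \lambda \bm g \le 0$, and normalise, say by $g_1(x_0) = 1$ at a fixed reference point $x_0 \in B_1$.

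Next, for each $n$ consider the principal Dirichlet eigenvalue problem for $\bcA$ on the bounded domain $B_n \times \cS$: by the classical Krein--Rutman / Berestycki--Nirenberg--Varadhan theory for weakly-coupled cooperative systems (using irreducibility of $\bm M$ on $B_n$, which holds since it holds on some bounded subdomain, for $n$ large) there is a principal Dirichlet eigenpair $(\bPhi_n, \lambda_n)$ with $\bPhi_n > 0$ in $B_n\times\cS$, $\bPhi_n = 0$ on $\partial B_n \times \cS$, and $\bcA \bPhi_n = -\lambda_n \bPhi_n$. Standard domain-monotonicity of the Dirichlet principal eigenvalue gives $\lambda_n \downarrow$ and $\lambda_n \ge \lamstr$ for all $n$ (this last inequality is the one point needing the supersolution $\bm g$: comparing $\bm g$ against $\bPhi_n$ on $B_n$ via the strong maximum principle \hyperlink{P2a}{(P2a)} shows $\lambda_n \ge \lambda$ for every $\lambda < \lamstr$, hence $\lambda_n \ge \lamstr$). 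In particular $\lambda_n \ge \lamstr \ge \lambda$, so $\bcA \bPhi_n + \lambda \bPhi_n = (\lambda - \lambda_n)\bPhi_n \le 0$ in $B_n\times\cS$.

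Now normalise $\bPhi_n$ so that $(\bPhi_n)_1(x_0) = 1$. The key a priori bound is a Harnack-type inequality: on any fixed compact $Q \times \cS \subset B_n \times \cS$, the Harnack inequality for cooperative weakly-coupled systems (again using irreducibility of $\bm M$ to couple the components, cf.\ \cref{L2.1} and the maximum principles in the excerpt) gives $\sup_{Q\times\cS}\bPhi_n \le C_Q \inf_{Q\times\cS}\bPhi_n \le C_Q (\bPhi_n)_1(x_0) \cdot C'_Q$, uniformly in $n$. With a uniform $L^\infty_{\mathrm{loc}}$ bound in hand, interior $\Sob^{2,p}$ elliptic estimates applied componentwise to $L_k (\bPhi_n)_k = -\sum_j m_{kj}(\bPhi_n)_j - c_k (\bPhi_n)_k + \lambda_n(\bPhi_n)_k$ give uniform local $\Sob^{2,p}$ bounds; passing to a subsequence, $\bPhi_n \to \bPsi$ in $C^1_{\mathrm{loc}}$ and weakly in $\Sob^{2,p}_{\mathrm{loc}}$, with $\lambda_n \to \lamstr$. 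The limit satisfies $\bcA\bPsi = -\lamstr\bPsi$ with $\bPsi \ge 0$, $(\bPsi)_1(x_0) = 1$, and the strong maximum principle \hyperlink{P2a}{(P2a)} upgrades $\bPsi \ge 0$ to $\bPsi > 0$ on all of $\Rd\times\cS$. This proves the case $\lambda = \lamstr$; for $\lambda < \lamstr$ one repeats the argument with $\lambda_n \equiv \lambda$ replaced by solving $\bcA \bm u_n = -\lambda \bm u_n$ on $B_n$ with boundary data $\bm u_n = 1$, using $\bm g$ as a lower barrier to prevent collapse, or more simply observes that $\lamstr(\bm c + (\lamstr - \lambda)\bm 1) $... — cleaner: just run the exhaustion directly at level $\lambda$, noting $\lambda < \lamstr \le \lambda_n$ so the comparison still works.

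The main obstacle I anticipate is establishing the uniform Harnack inequality for the \emph{system} — i.e.\ controlling $\inf$ of $(\bPhi_n)_k$ in terms of $(\bPhi_n)_1(x_0)$ across all components $k \in \cS$ simultaneously. Scalar Harnack handles each $(\bPhi_n)_k$ on a chain of balls, but transferring the lower bound from component $1$ to component $k$ requires the off-diagonal terms $m_{kj} \ge 0$ together with irreducibility of $\bm M$ on a bounded set, so that the coupling $\sum_{j\ne k} m_{kj}(\bPhi_n)_j$ feeds positivity of one component into positivity of the next; making this quantitative and uniform in $n$ is the crux. I expect this to have been set up in \cref{L2.1} or a companion lemma, so I would cite that and keep the present argument focused on the exhaustion/compactness scheme.
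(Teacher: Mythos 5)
Your overall scheme (exhaustion by Dirichlet problems on balls, normalization at a reference point, the system Harnack inequality, interior $\Sob^{2,p}$ estimates, and the strong maximum principle to upgrade $\bPsi\ge0$ to $\bPsi>0$) is exactly the paper's scheme, and your treatment of the case $\lambda=\lamstr$ coincides with \cref{L2.3}: the Dirichlet principal eigenpairs $(\bPsi_n,\lambda_n)$ on $B_n$ satisfy $\lambda_n\downarrow\lamstr$ (the lower bound $\lambda_n\ge\lamstr$ coming, as you say, from comparison with a global supersolution, which is just the definition \cref{EA.1}), and the normalized eigenfunctions converge to a positive eigenfunction. Your worry about the Harnack inequality for the coupled system is resolved in the paper by citation to Sirakov's Harnack inequality for cooperative systems, not by \cref{L2.1}, so nothing new needs to be proved there.

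Where you diverge, and where your write-up wobbles, is the case $\lambda<\lamstr$. Your final ``cleaner'' suggestion --- ``just run the exhaustion directly at level $\lambda$'' --- does not make sense as stated: the eigenvalue exhaustion produces eigenvalues $\lambda_n\to\lamstr$, so its limit is an eigenfunction at level $\lamstr$, never at a strictly smaller $\lambda$; and the shift $\bm c\mapsto\bm c+(\lamstr-\lambda)$ only translates both $\lambda$ and $\lamstr$ by the same constant, so it cannot reduce the sub-principal case to the principal one (you rightly abandoned that mid-sentence). Your other suggestion --- solve $\bcA\bm u_n+\lambda\bm u_n=0$ in $B_n$ with boundary data $\bm 1$ and renormalize at an interior point --- is viable, since $\lambda<\lamstr\le\lambda_{B_n}$ makes the refined maximum principle available, but it requires solvability and positivity for a Dirichlet problem with nonzero boundary data (equivalently, with a sign-changing right-hand side), which \cref{LA.3} as stated (it assumes $\bm f\lneq0$) does not directly provide. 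The paper's device avoids this: it places a small ball $D_n\Subset B_n\setminus B_{n-1}$ and solves $(\bcA+\lambda)\widetilde{\bPsi}_n=-\Ind_{D_n\times\cS}$ in $B_n\times\cS$ with zero boundary data, which falls squarely under \cref{LA.3} and yields positive $\widetilde\bPsi_n$ immediately; since the support of the forcing escapes to infinity, every compact set eventually sees only the homogeneous equation, and after normalizing at the origin the Harnack--compactness limit is an exact positive solution of $\bcA\bPsi+\lambda\bPsi=0$ in all of $\Rd$. If you adopt that choice of right-hand side, your argument closes; as written, the $\lambda<\lamstr$ case is the one genuine gap.
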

In view of \cref{T1.2}, the following question seems natural.

\medskip\noindent
\textbf{Question.}\ Given an eigenpair $(\bPsi, \lambda)$, when can we identify it as a 
principal eigenpair?

\medskip
A main goal of this article is to answer this question by 
exploiting the `stability' properties of the twisted operators.
Recall the operator $\bLg$ in \cref{E-bLg}.
Corresponding to an eigenpair $(\bPsi, \lambda)$, we let $\bm\psi\df\log\bPsi$ 
componentwise,
and define the twisted operator $\widetilde\bLg^{\bm\psi}$ as follows.
We first define the operators $\bigl\{L_k^{\bm\psi}\bigr\}_{k\in\cS}$, by
\begin{equation*}
L_k^{\bm\psi} g (x) \,\df\, \trace\bigl(a_k(x)\grad^2 g(x)\bigr)
+ \bigl(b_k(x)+ 2a_k(x)\nabla\psi_k(x)\bigr)\cdot \grad g(x)\,,
\quad x\in\Rd\,,\quad g\in C^2(\Rd)\,,
\end{equation*}
and the matrix
$\widetilde{\bm{M}} = \bigl[\widetilde{m}_{ij}\bigr]_{i,j\in\cS}$
by
\begin{equation*}
\widetilde{m}_{ij} \,\df\, m_{ij}\frac{\Psi_j}{\Psi_i}\text{\ \ for\ } i\ne j\,,
\quad\text{and\ }
\widetilde{m}_{ii} \,\df\, -\sum_{j\in\cS,\,j\ne i} \widetilde{m}_{ij}\,.
\end{equation*}
With these definitions, the twisted operator is given by
\begin{equation}\label{E-tbLg}
\widetilde\bLg^{\bm\psi} \bm{f} (x) \,\df\, \bm{L}^{\bm\psi}\bm{f}(x)
+ \widetilde{\bm{M}}(x)\bm{f}(x)\,, \quad x\in\Rd\,.
\end{equation}

Let us first consider the case of bounded domains. Let $D$ be a smooth
bounded domain in $\Rd$, and $(\bPsi_D, \lambda_D)$ be the Dirichlet principal eigenpair of 
$\bcA$ in $D$, that is,
\begin{equation}\label{E1.12}
\begin{split}
\bcA \bPsi_{D} &\,=\, \lambda_D\, \bPsi_{D} \quad \mbox{in\ }
D\times\cS\,,\\
\bPsi_{D} &\,=\, 0\quad \text{on\ } \partial{D}\times\cS\,,\\
\bPsi_{D} &\, > \, 0 \quad \text{in\ } D\times\cS\,,
\end{split}
\end{equation}
and $\bPsi_D\in \cC_{0}(\overline{D}\times\cS)
\cap\Sobl^{2,p}(D\times\cS)$ for $p>d$ (see \cref{TA.1}).
We note that $\lambda_D$ is the only eigenvalue with a positive eigenfunction.
We let $\widetilde{\bLg}_D$ denote the twisted operator corresponding to 
the eigenpair $(\bPsi_D, \lambda_D)$. Then we have the following result.

\begin{theorem}\label{T1.3}
There exists a inf-compact function 
$\bm{\Lyap}_D: D\times \cS\to [1, \infty)$, $\bm\Lyap_D\in \Sobl^{2,p}(D\times\cS)$, 
satisfying
\begin{equation}\label{ET1.3A}
\widetilde{\bLg}_D\bm\Lyap_D\,\le -\delta_1\bm{\Lyap}_D + \delta_2\quad \text{in\ } 
D\times\cS
\end{equation}
for some constants
$\delta_1, \delta_2>0$.
Furthermore, $\widetilde\bLg_D$ is regular when restricted to $D$ in the sense of
\cref{D1.2}\,\ttup{i}\,.
\end{theorem}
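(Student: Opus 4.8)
The plan is to take $\bm\Lyap_D$ to be a negative fractional power of the Dirichlet eigenfunction. Since $\widetilde\bLg_D$ depends on $\bPsi_D$ only through $\grad\log\Psi_{D,k}$ and the ratios $\Psi_{D,j}/\Psi_{D,i}$, it is left unchanged when $\bPsi_D$ is multiplied by a positive constant, so I may assume $\bPsi_D\le\bm1$ on $\bar D$. Fix $\beta\in(0,1)$ and set $\bm\Lyap_D\df\bPsi_D^{-\beta}$ (componentwise). Then $\bm\Lyap_D\ge\bm1$; it lies in $\Sobl^{2,p}(D\times\cS)$ because $\bPsi_D\in\Sobl^{2,p}$ is positive in $D$ and $t\mapsto t^{-\beta}$ is smooth on $(0,\infty)$ (the chain rule being legitimate in $\Sobl^{2,p}$ for $p>d$); and it is inf-compact because $\bPsi_D\in\cC_0(\bar D\times\cS)$ vanishes on $\partial D\times\cS$, so each component of $\bm\Lyap_D$ tends to $\infty$ at $\partial D$.

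The heart of the argument is the drift estimate \cref{ET1.3A}. Writing $\psi_k=\log\Psi_{D,k}$, differentiating, and using $\bcA\bPsi_D=\lambda_D\bPsi_D$ in $D$ to eliminate $L_k\Psi_{D,k}$, one obtains for each $k$ an identity of the form
\begin{equation*}
(\widetilde\bLg_D\bm\Lyap_D)_k \,=\, \Lyap_{D,k}\Bigl[\beta(\beta-1)\,(a_k\grad\psi_k)\cdot\grad\psi_k \,+\, R_k\Bigr]\,,
\end{equation*}
where $R_k$ gathers the contributions of $\bm c$, $\lambda_D$, and the twisted off‑diagonal couplings (these last entering through the factors $(\Psi_{D,j}/\Psi_{D,k})^{-\beta}-1$). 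The Hopf boundary lemma applies to each $\Psi_{D,k}$ — it solves a uniformly elliptic linear equation with bounded coefficients, is positive in $D$, vanishes on the smooth boundary $\partial D$, and, by elliptic regularity, belongs to $W^{2,p}$ up to $\partial D$ — and yields $\partial_\nu\Psi_{D,k}<0$ on $\partial D$. Hence $\Psi_{D,k}\asymp\dist(\cdot,\partial D)$ and $\abs{\grad\Psi_{D,k}}$ is bounded away from $0$ near $\partial D$. Consequently the ratios $\Psi_{D,j}/\Psi_{D,k}$, and therefore $R_k$, stay bounded near $\partial D$, while by non‑degeneracy $(a_k\grad\psi_k)\cdot\grad\psi_k=\Psi_{D,k}^{-2}(a_k\grad\Psi_{D,k})\cdot\grad\Psi_{D,k}\ge c\,\Psi_{D,k}^{-2}$ there for some $c>0$. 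Since $\beta(\beta-1)<0$, the bracket tends to $-\infty$ as $x\to\partial D$: for any prescribed $\delta_1>0$ there is $\eta>0$ with $(\widetilde\bLg_D\bm\Lyap_D)_k\le-\delta_1\Lyap_{D,k}$ on $D\cap\{\dist(\cdot,\partial D)<\eta\}$. On the complementary compact subset of $D$ all coefficients and $\bPsi_D^{-1}$ are bounded, so $(\widetilde\bLg_D\bm\Lyap_D)_k$ is bounded there, and taking $\delta_2>0$ large enough gives \cref{ET1.3A} throughout $D\times\cS$.

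For regularity, observe that $\widetilde\bLg_D\bm1=\bm0$ (the second‑order parts annihilate constants and each row of $\widetilde{\bm M}$ sums to zero). Suppose, for contradiction, that $\bm u\in\Sobl^{2,p}(D\times\cS)$, $p>d$, is bounded and positive with $\widetilde\bLg_D\bm u=\bm C\bm u$ in $D$ for some constant vector $\bm C>0$, and put $C_0\df\min_k C_k>0$. Combining $\widetilde\bLg_D\bm1=\bm0$ with \cref{ET1.3A}, a short computation shows that for every $t>0$ the function $\bm h_t\df\bm u-t\bm\Lyap_D-(t\delta_2/C_0)\bm1$ satisfies $(\widetilde\bLg_D\bm h_t)_k-C_k(\bm h_t)_k>0$ in $D$ for every $k$. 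Since $\bm\Lyap_D$ is inf‑compact and $\bm u$ bounded, each component of $-\bm h_t$ tends to $+\infty$ as $x\to\partial D$ and hence attains its minimum over $D$ at an interior point; by the strong maximum principle \hyperlink{P2a}{\rm(P2a)}, which holds for $\widetilde\bLg_D$ (a cooperative non‑degenerate elliptic operator whose coefficients are locally bounded in $D$), that minimum cannot be negative, so $\bm h_t\le\bm0$, i.e.\ $\bm u\le t\bm\Lyap_D+(t\delta_2/C_0)\bm1$ in $D$. Letting $t\downarrow0$ forces $\bm u\le\bm0$, a contradiction. Hence $\widetilde\bLg_D$ is regular on $D$ in the sense of \cref{D1.2}\,\ttup{i}.

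The main obstacle is the boundary analysis of the second paragraph: one must show that the negative second‑order term (present because $\beta(\beta-1)<0$ and of order $\Psi_{D,k}^{-\beta-2}$) genuinely dominates both the off‑diagonal coupling terms and the a priori unbounded twisted drift as $x\to\partial D$ — precisely the point at which the Hopf lemma and the non‑degeneracy hypothesis are essential. Everything else reduces to the maximum principle.
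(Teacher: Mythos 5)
Your proof is correct, but it takes a genuinely different route from the paper's. The paper builds the Lyapunov function as a \emph{ratio of two Dirichlet eigenfunctions}: it perturbs the potential to $\bm c-\Ind_K$ and enlarges the domain to a smooth $D_1\Supset\Bar D$, uses the strict monotonicity and continuity of $\lambda_D$ with respect to the potential and the domain (\cref{TA.2,TA.3,TA.4}) to get $\lambda_1\df\lambda_{D_1}(\bm c-\Ind_K)>\lambda_D$, and sets $(\bm\Lyap_D)_k=(\bPsi_{D_1})_k/(\bPsi_D)_k$; the ratio identity of \cref{L2.2} then gives \cref{ET1.3A} in one line, with the sharper right-hand side $\delta_2\Ind_K$, and with no boundary analysis whatsoever. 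You instead take $\bm\Lyap_D=\bPsi_D^{-\beta}$ and verify the drift inequality by direct computation; this avoids the appendix machinery and the extension to a larger domain, but it transfers the burden to boundary regularity — global $W^{2,p}$ estimates up to $\partial D$ and the Hopf lemma are needed to control $\grad\psi_k$ and the ratios $\Psi_{D,j}/\Psi_{D,k}$, whereas the paper needs neither. Your computation is sound: after substituting the eigenvalue equation the second derivatives of $\bPsi_D$ drop out, the surviving singular term $\beta(\beta-1)(a_k\grad\psi_k)\cdot\grad\psi_k$ is negative of order $\dist(\cdot,\partial D)^{-2}$ by Hopf and non-degeneracy, and it dominates the bounded remainder $R_k$. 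The regularity arguments also differ: the paper maps a putative bounded positive solution of $\widetilde\bLg_D\bm u=\bm C\bm u$ back to a supersolution for $\bcA$ via \cref{L2.2} and contradicts the refined maximum principle \eqref{ELA.3A}, while you use the Lyapunov function itself as a barrier together with \hyperlink{P2a}{(P2a)} and let $t\downarrow0$; your version has the mild virtue of deriving regularity directly from \cref{ET1.3A} (so it would apply to any operator admitting such a Lyapunov function), and both are complete.
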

The function $\bm\Lyap_D$ in \eqref{ET1.3A} is commonly known as Lyapunov function. It 
turns out that the existence of a Lyapunov function in a bounded domain follows from the 
monotonicity property (cf. \cref{TA.2,TA.3}) of the
principal eigenvalue; this does not always hold in $\Rd$.

\begin{remark}
The process associated with $\widetilde{\bLg}_D$ in \cref{T1.3}
is confined in the domain $D$, and is known
in the literature as the \emph{$Q$-process}.
There is an extensive literature on the $Q$-process covering
various classes of Markov processes.
We cite here \cite{Pinsky-85,Champagnat-16,Champagnat-17}.
\end{remark}

\begin{remark}
Lyapunov functions play a central role in the study of exponential ergodicity of
regime switching diffusions.
In fact, finding sufficient condition for the existence of Lyapunov function is an 
important issue.
See, for instance, \cite{CH15,HMS11,YZ10,XZ06,TM16} and references therein.
In particular, by \cite[Theorem~5.3]{XZ06}, if $\bm\Lyap$ is inf-compact,
$\bm{M}$ is bounded and $\bm{a}$ is 
uniformly elliptic,
we get $\bm\Lyap$-geometric ergodicity for the regime switching diffusion.
In this paper,
we only concentrate on the existence of Lyapunov functions and do not address the delicate 
issue of exponential ergodicity.
\end{remark}

The existence of a Lyapunov function for $\widetilde\bLg$ in $\Rd$ is related to a certain 
monotonicity property of the principal eigenvalue in $\Rd$,
which we state next. By $\sB_0^+(\Rd\times\cS)$ we 
denote the class of all nontrivial,
nonnegative bounded measurable functions
$\bm h\colon\Rd\times\cS\to [0,\infty)$ that vanish at infinity.

\begin{definition}\label{D1.3}
We say $\lamstr$ is \textit{monotone at $\bm c$ on the right}, if
$\lamstr(\bm c) > \lamstr(\bm{c} + \bm{h})$
for all $\bm h \in\sB^+_0(\Rd\times\cS)$, and
we say $\lamstr$ is \emph{strictly monotone at $\bm c$}, if
$\lamstr(\bm{c} - \bm{h}) > \lamstr(\bm c)$
for some $\bm h \in \sB^+_0(\Rd\times\cS)$.
\end{definition}

\begin{remark}
It is 
shown later in
\cref{T1.6} that strict monotonicity implies $\lamstr(\bm{c} - \bm{h}) > \lamstr(\bm c)$
for all $\bm h \in \sB^+_0(\Rd\times\cS)$. Furthermore, since 
$\bm{c}\mapsto\lamstr(\bm{c})$ 
is concave
by \cref{L2.3}, strict monotonicity implies monotonicity on the right.
\end{remark}

Throughout the rest of the paper, we always assume that the principal eigenvalue
is finite:

\begin{hypothesis}\label{H1.1}
$\lamstr=\lamstr(\bm{c})<\infty$.
\end{hypothesis}

The next result shows that twisted operators corresponding to the lower eigenvalues are 
not recurrent. This should be compared with 
\cite[Proposition~3.3]{Kaise-06} and \cite[Theorem~2.1]{ABS19}.

\begin{theorem}\label{T1.4}
For any $\lambda < \lamstr$, the twisted operator $\widetilde{\bLg}$
corresponding to $(\bPsi,\lambda)$ is not recurrent.
\end{theorem}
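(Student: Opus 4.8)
The plan is to prove \cref{T1.4} by contradiction, producing a positive function that is a \emph{strict} supersolution of the twisted operator $\widetilde{\bLg}$ and showing that, if $\widetilde{\bLg}$ were recurrent, every positive supersolution would have to be constant. First I would invoke \cref{T1.2} to fix an eigenfunction $\bPsi'>0$ in $\Sobl^{2,p}(\Rd\times\cS)$ with $\bcA\bPsi'=-\lamstr\bPsi'$, and set $g_k\df\Psi'_k/\Psi_k$ for $k\in\cS$. The computation I would rely on is the Doob-type transform identity $\widetilde{\bLg}(\bm v/\bPsi)=(\bcA\bm v+\lambda\bm v)/\bPsi$ (all operations componentwise), valid for $\bm v\in\Sobl^{2,p}(\Rd\times\cS)$, which one verifies by expanding $L_k(\Psi_k g_k)$, using $m_{kj}\Psi_j=\widetilde{m}_{kj}\Psi_k$ for $j\ne k$, and substituting the eigenvalue equation $\bcA\bPsi=-\lambda\bPsi$. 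Taking $\bm v=\bPsi'$ it gives $\widetilde{\bLg}\bm g=(\lambda-\lamstr)\bm g=-\mu\bm g$ with $\mu\df\lamstr-\lambda>0$, so $\bm g>0$, $\bm g\in\Sobl^{2,p}(\Rd\times\cS)$, $\widetilde{\bLg}\bm g\le0$ and $\widetilde{\bLg}\bm g\not\equiv0$; also $\widetilde{\bLg}\bm1=0$. Since $\widetilde{\bLg}$ is again an elliptic operator with the cooperative, zero-row-sum structure of \cref{E-tbLg}, the maximum principles \hyperlink{P2a}{(P2a)}--\hyperlink{P2b}{(P2b)} apply to it.

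Next, assuming for contradiction that $\widetilde{\bLg}$ is recurrent, I would show that $\bm g$ attains a strictly positive minimum. Let $c_0\df\min_{\bar B_1\times\cS}\bm g>0$. For $R>1$, let $\bm u_R$ solve $\widetilde{\bLg}\bm u_R=0$ on $(B_R\setminus\bar B_1)\times\cS$ with $\bm u_R=\bm1$ on $\partial B_1\times\cS$ and $\bm u_R=\bm0$ on $\partial B_R\times\cS$ (solvable by \cref{L2.1}). By \hyperlink{P2b}{(P2b)} one has $\bm0\le\bm u_R\le\bm1$ and $R\mapsto\bm u_R$ nondecreasing, so by interior elliptic estimates (and a boundary barrier at $\partial B_1$) $\bm u_R\uparrow\bm u_\infty$, where $\bm u_\infty$, extended by $\bm1$ on $\bar B_1\times\cS$, is a bounded solution of the exterior Dirichlet problem of \cref{D1.2}\,\ttup{ii} for $B_1\times\cS$ with data $\bm1$; recurrence (its uniqueness clause) forces $\bm u_\infty=\bm1$. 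Then $\bm g-c_0\bm u_R$ is a $\widetilde{\bLg}$-supersolution on the annulus that is nonnegative on its boundary, so \hyperlink{P2b}{(P2b)}, used as a weak maximum principle, gives $\bm g\ge c_0\bm u_R$ there; letting $R\to\infty$ yields $\bm g\ge c_0$ on $\Rd\times\cS$. Hence $I\df\inf_{\Rd\times\cS}\bm g=c_0>0$, attained at the point of $\bar B_1\times\cS$ realising $c_0$.

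Finally I would derive the contradiction. Put $\bm w\df\bm g-I\bm1\ge0$; then $\widetilde{\bLg}\bm w=\widetilde{\bLg}\bm g=-\mu\bm g\le0$, $\bm w$ attains the value $0$, and $\bm w\not\equiv0$ (otherwise $\bm g\equiv I$ and $\widetilde{\bLg}\bm g=0\ne-\mu I\bm1$). If $w_{k_0}(x_0)=0$ for some $(x_0,k_0)$, then $w_{k_0}\ge0$ has a nonpositive local minimum at the interior point $x_0$, so applying \hyperlink{P2b}{(P2b)} with $\cS_1=\{k_0\}$, $D_{k_0}=B_R(x_0)$ and zeroth-order constant $0$ forces $w_{k_0}$ to be constant on every such ball, hence $w_{k_0}\equiv0$ on $\Rd$; thus $\cS'\df\{k\in\cS:w_k\equiv0\}$ is nonempty. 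If $\cS'\ne\cS$, the irreducibility of $\bm M$ — equivalently of $\widetilde{\bm M}$, since $\widetilde m_{ij}$ and $m_{ij}$ share the same sign for $i\ne j$ — in the bounded subdomain $D$ on which it holds produces $i\in\cS'$ and $j\notin\cS'$ with $\widetilde m_{ij}>0$ on a positive-measure subset of $D$; since $w_i\equiv0$, the inequality $(\widetilde{\bLg}\bm w)_i\le0$ collapses to $\sum_{l\ne i}\widetilde m_{il}w_l\le0$, all terms nonnegative, forcing $\widetilde m_{ij}w_j\equiv0$ and so $w_j(x_1)=0$ for some $x_1\in D$; then $w_j\equiv0$ by the previous step, contradicting $j\notin\cS'$. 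Hence $\cS'=\cS$, i.e.\ $\bm w\equiv0$, contradicting $\bm w\not\equiv0$. Therefore $\widetilde{\bLg}$ is not recurrent.

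The step I expect to be the main obstacle is the second one — extracting from recurrence of $\widetilde{\bLg}$ that $\inf\bm g$ is positive and attained. This is the analytic incarnation of the probabilistic fact that a recurrent process carries no non-constant positive superharmonic function, and it is exactly here that the hypotheses enter essentially: the exhaustion $\bm u_R\uparrow\bm1$ uses the \emph{uniqueness} part of the definition of recurrence, and the componentwise comparison on the annulus uses the cooperative structure through \hyperlink{P2b}{(P2b)}. A minor additional care is that $\bm M$ is irreducible only on \emph{some} bounded subdomain, so the vanishing of one component of $\bm w$ must be propagated to all of $\Rd$ before irreducibility can be used to reach the remaining components.
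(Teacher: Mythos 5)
Your proposal is correct and follows essentially the same route as the paper: the Doob-transform identity you compute is exactly the paper's \cref{L2.2}, and your second and third steps reproduce, inline, the paper's \cref{P2.1} (a recurrent operator admits no non-constant supersolution bounded below), which the paper simply cites to conclude that $\bPsi^*/\bPsi$ is constant and hence $\lambda=\lamstr$. The only cosmetic difference is that the paper's Liouville argument places the inner boundary data on a single component $\{i\}$ rather than on all of $\cS$, which does not change the substance.
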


This of course, brings us to the question what happens to the twisted operator 
corresponding to
the principal eigenpair. 
As the following example suggests, the twisted operator may be
non-recurrent for the principal eigenpair even when $\bm{M}$ is irreducible.

\begin{example}
Let $N=2$ and $a=\Id$, $b=0$, $c=0$, and $m_{12}=m_{21}=1$.
Then the constant functions are the principal eigenfunctions and $\lamstr=0$.
Note that $\bigl(\sin(n^{-1}\pi x), \sin(n^{-1}\pi x)\bigr)$ are the eigenfunctions
in $B_n\times\cS$ with eigenvalue $\frac{\pi^2}{n^2}$. Therefore, 
$\lamstr=\lim_{n\to\infty}\frac{\pi^2}{n^2}=0$.
Since constants are principal eigenfunctions, the corresponding twisted operator is
the same as $\bLg$.
Therefore, the twisted operator is not recurrent for $d\ge 3$. Indeed,
setting $u_i(x)=1-|x|^{2-d}$ and $B=B_1$ we see that
$$\bLg \bm{u} \,=\, 0 \quad \text{in\ } (B\times\cS)^c, \quad \text{and}
\quad \bm{u} \,=\, 0\quad \text{on\ } \partial B\times \cS.$$
\end{example}

It turns out that the recurrence of the twisted operator is equivalent to the monotonicity 
of $\lamstr$ on the right.

\begin{theorem}\label{T1.5}
The following are equivalent.
\begin{itemize}
\item[(a)] The twisted operator corresponding to $(\bPsi^*, \lamstr)$ is recurrent.
\item[(b)] $\lamstr$ is monotone on the right at $\bm c$.
\end{itemize}
In addition, under either \ttup{a} or \ttup{b}, $\lamstr$ is a simple eigenvalue.
\end{theorem}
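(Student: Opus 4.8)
The plan is to deduce the equivalence from the ground-state transform, which reduces both implications to Liouville-type properties of the twisted operator $\widetilde{\bLg}$ associated with the principal eigenpair $(\bPsi^*,\lamstr)$. Writing $\bm\psi=\log\bPsi^*$ componentwise, a direct computation using $\bcA\bPsi^*=-\lamstr\bPsi^*$ yields the intertwining identity
\begin{equation*}
\widetilde{\bLg}\,\bm g\;=\;\frac{1}{\bPsi^*}\,\bigl(\bcA+\lamstr\bigr)\bigl(\bPsi^*\bm g\bigr)\,,\qquad\bm g\in\Sobl^{2,p}(\Rd\times\cS)\,.
\end{equation*}
Two elementary facts are used throughout. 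First, since each $\bm h\in\sB^+_0(\Rd\times\cS)$ is diagonal and nonnegative, a positive $\bm f$ with $(\bcA+\bm h)\bm f+\lambda\bm f\le0$ also satisfies $\bcA\bm f+\lambda\bm f\le0$; hence $\lamstr(\bm c+\bm h)\le\lamstr(\bm c)$, so $\lamstr$ fails to be monotone on the right at $\bm c$ exactly when $\lamstr(\bm c+\bm h)=\lamstr$ for some nontrivial $\bm h\in\sB^+_0(\Rd\times\cS)$. Second, by \cref{T1.2} applied to the potential $\bm c+\bm h$ at the level $\lamstr=\lamstr(\bm c+\bm h)$, this is equivalent to the existence of a nontrivial $\bm h\in\sB^+_0(\Rd\times\cS)$ and a positive $\bm\Phi\in\Sobl^{2,p}(\Rd\times\cS)$ with $(\bcA+\bm h)\bm\Phi=-\lamstr\bm\Phi$, i.e.\ $(\bcA+\lamstr)\bm\Phi=-\bm h\bm\Phi\le0$ with $\bm h\bm\Phi\not\equiv0$. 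Finally, the off-diagonal entries of $\widetilde{\bm M}$ are positive multiples of those of $\bm M$, so $\widetilde{\bm M}$ is irreducible on $\Rd$ and the recurrence dichotomy of \cref{R1.1} is available for $\widetilde{\bLg}$.

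\emph{Proof of \textup{(a)}$\Rightarrow$\textup{(b)}, by contraposition.} Assume $\lamstr$ is not monotone on the right and pick $\bm h,\bm\Phi$ as above. Then $\bm g\df\bm\Phi/\bPsi^*>0$ satisfies $\widetilde{\bLg}\bm g=-\bm h\bm g\le0$ with $\widetilde{\bLg}\bm g\not\equiv0$ by the intertwining identity, so $\bm g$ is a nonconstant nonnegative supersolution of $\widetilde{\bLg}$. Now a recurrent operator has no such supersolution: on the complement of a ball $B\times\cS$ one compares $\bm g-\inf_{\Rd\times\cS}\bm g$ with scalar multiples of the exterior $\widetilde{\bLg}$-harmonic functions with boundary value $\bm1$ — which increase to $\bm1$ precisely because $B\times\cS$ is recurrent in the sense of \cref{D1.2}\,(ii) — and, invoking the strong maximum principle \hyperlink{P2a}{(P2a)} and the irreducibility of $\widetilde{\bm M}$, concludes that $\bm g$ is constant. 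But then $\bm h\bm g\equiv0$, forcing $\bm h\equiv0$, a contradiction. Hence $\widetilde{\bLg}$ is not recurrent.

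\emph{Proof of \textup{(b)}$\Rightarrow$\textup{(a)}, by contraposition.} Assume $\widetilde{\bLg}$ is not recurrent. By the irreducibility of $\widetilde{\bm M}$ and the dichotomy of \cref{R1.1} there is a ball $B$ for which $B\times\cS$ is not recurrent, i.e.\ $\widetilde{\bLg}$ is transient off $B$. Fix a nontrivial bounded $\bm\varphi\ge0$ supported in $B$, and let $\bm u_n\in\Sobl^{2,p}(B_n\times\cS)$ solve $\widetilde{\bLg}\bm u_n=-\bm\varphi$ in $B_n\times\cS$ with $\bm u_n=0$ on $\partial B_n\times\cS$. The maximum principle gives $0\le\bm u_n\le\bm u_{n+1}$, and transience guarantees that $\bm u\df\lim_n\bm u_n$ is finite, so $\bm u\in\Sobl^{2,p}(\Rd\times\cS)$ solves $\widetilde{\bLg}\bm u=-\bm\varphi$ on $\Rd\times\cS$; moreover $\bm u>0$ everywhere, since a zero of some component at an interior point would propagate to $\bm u\equiv0$ via \hyperlink{P2a}{(P2a)} and irreducibility, contradicting $\bm\varphi\not\equiv0$. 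Put $\bm h\df\bm\varphi/\bm u$: this is nonnegative, supported in $\overline{B}$, and bounded, because $\bm u$ is bounded below by a positive constant on the compact set $\overline{B}\times\cS$; thus $\bm h\in\sB^+_0(\Rd\times\cS)$ and $\widetilde{\bLg}\bm u=-\bm h\bm u$. Setting $\bm\Phi\df\bPsi^*\bm u>0$, the intertwining identity gives $(\bcA+\lamstr)\bm\Phi=\bPsi^*\widetilde{\bLg}\bm u=-\bm h\bm\Phi\le0$, so $\lamstr(\bm c+\bm h)\ge\lamstr=\lamstr(\bm c)$; combined with $\lamstr(\bm c+\bm h)\le\lamstr(\bm c)$ this gives $\lamstr(\bm c+\bm h)=\lamstr(\bm c)$, so $\lamstr$ is not monotone on the right.

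Finally, for the simplicity claim, assume \textup{(a)} (equivalently \textup{(b)}): if $\bm\Phi>0$ solves $\bcA\bm\Phi=-\lamstr\bm\Phi$, then $\bm g\df\bm\Phi/\bPsi^*>0$ satisfies $\widetilde{\bLg}\bm g=0$ by the intertwining identity, hence is constant by the Liouville property of the recurrent operator $\widetilde{\bLg}$ used above; so $\bm\Phi$ is a positive scalar multiple of $\bPsi^*$. The step I expect to be the main obstacle is the self-contained, operator-theoretic proof of the two recurrence/transience facts invoked above — that a recurrent operator admits no nonconstant nonnegative supersolution, and that a transient operator admits, off a ball, a finite positive ``Green potential'' of a bounded compactly supported source — which are the analytic counterparts of classical probabilistic statements and must be deduced from the exterior Dirichlet problem of \cref{D1.2}\,(ii), the strong maximum principles \hyperlink{P2a}{(P2a)}--\hyperlink{P2b}{(P2b)}, the irreducibility of $\widetilde{\bm M}$, and interior elliptic estimates for the relevant monotone limits, rather than from a probabilistic representation.
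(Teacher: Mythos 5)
Your direction (a)$\Rightarrow$(b) and your simplicity argument are essentially the paper's: the quotient of two eigenfunctions is, via the ground-state (intertwining) identity of \cref{L2.2}, a positive supersolution of the twisted operator, and the Liouville property of recurrent operators that you invoke is exactly \cref{P2.1}, proved there from the exterior Dirichlet problem, \hyperlink{P2a}{(P2a)} and the irreducibility of $\widetilde{\bm M}$. For (b)$\Rightarrow$(a) you take a genuinely different route. The paper never builds a Green potential: it first establishes \cref{T1.8} (monotonicity on the right is equivalent to minimal growth at infinity of $\bPsi^*$), then, assuming non-recurrence, extracts a nonzero bounded solution $\bm u$ of an exterior problem with zero boundary data, normalized so that $\sup u =1$ is not attained, and observes that $\bPhi=\bPsi^*(\bm 1-\bm u)$ is a positive supersolution of $\bcA+\lamstr$ off a compact set which dominates no $\kappa\bPsi^*$ --- contradicting minimal growth. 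Your perturbation $\bm h=\bm\varphi/\bm u$ built from a Green potential is instead the analytic transcription of the probabilistic argument of \cite{ABS19}, and is in fact close in spirit to the paper's own proof of \cref{T1.9}. Your route is more self-contained (it does not pass through \cref{T1.8}) and your simplicity proof via the Liouville property is cleaner than the paper's detour through \cref{T1.7}; the paper's route buys the additional equivalence with minimal growth, which it needs anyway.

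The step you flag is a genuine gap as written: you assert, but do not prove, that non-recurrence forces the monotone limit $\bm u=\lim_n\bm u_n$ of the approximate Green potentials to be finite. It is fillable with the tools already in the paper. First, non-recurrence of $\widetilde\bLg$ gives, by the contrapositive of \cref{P2.2} (not merely the dichotomy of \cref{R1.1}), failure of uniqueness for the exterior problem relative to \emph{every} $B\times\cS$, and the construction in Step~2 of the proof of \cref{T1.1} upgrades this to a nonnegative, nonzero, bounded $\bm w$ with $\widetilde\bLg\bm w=0$ in $(\bar B\times\cS)^c$, $\bm w=0$ on $\bar B\times\cS$, $0\le\bm w\le 1$; Harnack and irreducibility give $\delta\df\min_j\min_{\partial B_{2r}}w_j>0$ where $B=B_r$. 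Writing $M_n\df\max_j\max_{\partial B}(u_n)_j$, the comparison principle on the annulus (note the row sums of $\widetilde{\bm M}$ vanish, so \cref{L2.1} applies with zero potential) yields $\bm u_n\le M_n(\bm 1-\bm w)$ there, hence $\bm u_n\le(1-\delta)M_n$ on $\partial B_{2r}\times\cS$; the ABP bound of \cref{L2.1} on $B_{2r}$ then gives $M_n\le(1-\delta)M_n+C\norm{\bm\varphi}_{L^d}$, so $M_n\le C\norm{\bm\varphi}_{L^d}\delta^{-1}$ uniformly in $n$, and the rest of your argument goes through. With this lemma supplied, your proof is correct.
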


The next result characterizes the strict monotonicity property.
\begin{theorem}\label{T1.6}
The following are equivalent.
\begin{itemize}
\item[(a)] The twisted operator corresponding to $(\bPsi^*, \lamstr)$ is exponentially 
stable.
\item[(b)] $\lamstr$ is strictly monotone at $\bm c$.
\item[(c)] For any $\bm h\in\sB^+_0(\Rd\times\cS)$, we have $\lamstr(\bm c - \bm h) > 
\lamstr(\bm c)$.
\end{itemize}
\end{theorem}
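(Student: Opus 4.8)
The plan is to establish the cycle (a)$\Rightarrow$(c)$\Rightarrow$(b)$\Rightarrow$(a), using \cref{T1.5} as the key bridge between monotonicity on the right and recurrence of the twisted operator, and the concavity of $\bm{c}\mapsto\lamstr(\bm c)$ from \cref{L2.3}. Note that (c)$\Rightarrow$(b) is trivial by definition of strict monotonicity, so the real content is (a)$\Rightarrow$(c) and (b)$\Rightarrow$(a).

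For (a)$\Rightarrow$(c): assume the twisted operator $\widetilde\bLg^{\bm\psi^*}$ corresponding to $(\bPsi^*,\lamstr)$ is exponentially stable, with Lyapunov function $\bm\Lyap\ge1$ and constants $\kappa_0,\kappa_1>0$ as in \cref{ED1.2A}. Fix $\bm h\in\sB_0^+(\Rd\times\cS)$. The standard trick is to relate $\lamstr(\bm c-\bm h)$ to the twisted dynamics: writing the eigenvalue equation for $\bcA-\bm h$ in terms of $\bPsi^*$, one checks by the Doob transform that if $(\bm g,\mu)$ is an eigenpair for $\bcA-\bm h$ with $\mu\ge\lamstr$, then $\bm\phi\df\bm g/\bPsi^*$ is a positive (sub/super)solution of $\widetilde\bLg^{\bm\psi^*}\bm\phi=(\mu-\lamstr+\bm h)\bm\phi$ — more precisely, $\widetilde\bLg^{\bm\psi^*}\bm\phi+\bm h\bm\phi=(\mu-\lamstr)\bm\phi$ up to the componentwise manipulations inherent in the twisting. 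Since $\lamstr(\bm c-\bm h)\ge\lamstr(\bm c)$ always holds (as $\bm h\ge0$ and $\bcA-\bm h\le\bcA$, using the definition \cref{E1.9} of $\lamstr$ via $\Uppsi^+$), it suffices to rule out equality. Suppose $\lamstr(\bm c-\bm h)=\lamstr$. By \cref{T1.2} there is a positive eigenfunction, giving a positive $\bm\phi$ with $\widetilde\bLg^{\bm\psi^*}\bm\phi=-\bm h\bm\phi\le0$, i.e. $\bm\phi$ is a bounded-from-below positive superharmonic function for $\widetilde\bLg^{\bm\psi^*}$ that is \emph{strictly} superharmonic on the support of $\bm h$. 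But exponential stability forces recurrence of compact sets (\cref{T1.1}(c)), and together with the minimum principle \hyperlink{P2a}{(P2a)} applied on large balls, a nonconstant nonnegative superharmonic function with a region of strict superharmonicity cannot exist for a recurrent operator — one normalizes $\inf\bm\phi=1$ and derives a contradiction by comparing with the recurrent-set harmonic function, or alternatively integrates against the Lyapunov inequality. Hence $\lamstr(\bm c-\bm h)>\lamstr(\bm c)$, which is (c).

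For (b)$\Rightarrow$(a): assume $\lamstr(\bm c-\bm h_0)>\lamstr(\bm c)$ for some $\bm h_0\in\sB_0^+(\Rd\times\cS)$. By concavity of $\bm c\mapsto\lamstr(\bm c)$ (\cref{L2.3}), strict monotonicity propagates and in particular $\lamstr$ is monotone on the right at $\bm c$ (as noted in the remark following \cref{D1.3}); hence by \cref{T1.5} the twisted operator $\widetilde\bLg^{\bm\psi^*}$ is recurrent. It remains to upgrade recurrence to exponential stability, i.e. to construct the Lyapunov function satisfying \cref{ED1.2A}. The natural candidate is built from the eigenfunction of the perturbed problem: let $\bm g_0>0$ solve $(\bcA-\bm h_0)\bm g_0=-\lamstr(\bm c-\bm h_0)\bm g_0$ and set $\bm\Lyap\df \bm g_0/\bPsi^*$ (componentwise), normalized so that $\bm\Lyap\ge1$ — this uses that $\bm\Lyap$ is bounded below away from zero, which follows from Harnack estimates and the recurrence already established. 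A direct computation with the Doob/ground-state transform then gives
\[
\widetilde\bLg^{\bm\psi^*}\bm\Lyap \,=\, -\bigl(\lamstr(\bm c-\bm h_0)-\lamstr\bigr)\bm\Lyap - \bm h_0\bm\Lyap \,\le\, -\kappa_1\bm\Lyap + \kappa_0\Ind_{K\times\cS},
\]
with $\kappa_1\df\lamstr(\bm c-\bm h_0)-\lamstr>0$ and the $\bm h_0$ term absorbed into the compactly-supported remainder $\kappa_0\Ind_{K\times\cS}$ for $K\supset\supp\bm h_0$ (on $K^c$, $\bm h_0=0$ and the inequality is exact). Regularity of $\widetilde\bLg^{\bm\psi^*}$ follows from recurrence via \cref{T1.1}(a). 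This yields (a).

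The main obstacle is the step (a)$\Rightarrow$(c): ruling out the equality $\lamstr(\bm c-\bm h)=\lamstr(\bm c)$ requires a Liouville-type argument showing that a recurrent (indeed exponentially stable) twisted operator admits no nonconstant nonnegative superharmonic function that is strictly superharmonic somewhere — the delicate point being the passage between the componentwise/vector structure of the weakly-coupled system and the scalar maximum-principle tools \hyperlink{P2a}{(P2a)}–\hyperlink{P2b}{(P2b)}, where irreducibility of $\bm{M}$ is what allows one to conclude that strict superharmonicity in one component forces a contradiction across all components. The secondary technical nuisance is verifying the lower bound $\bm\Lyap\ge c>0$ in the (b)$\Rightarrow$(a) construction, which rests on interior Harnack inequalities for cooperative systems together with the recurrence already in hand.
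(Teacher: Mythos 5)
Your overall cycle (a)$\Rightarrow$(c)$\Rightarrow$(b)$\Rightarrow$(a) is a sensible plan, and the use of \cref{T1.5} plus concavity (\cref{L2.3}) as the bridge is exactly what the paper does. However there are two concrete problems, one fatal.

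\textbf{A sign error in (a)$\Rightarrow$(c) that invalidates the Liouville argument.}
Suppose $\lamstr(\bm c-\bm h)=\lamstr$ and let $\bm g>0$ solve $(\bcA-\bm h)\bm g=-\lamstr\bm g$. Set $\bm\phi=\bm g/\bPsi^*$. In \cref{L2.2} one takes $(\bPsi,\bm c,\lambda)=(\bPsi^*,\bm c,\lamstr)$ and $(\widehat\bPsi,\widehat{\bm c},\widehat\lambda)=(\bm g,\bm c-\bm h,\lamstr)$, which gives $\widehat{\bm c}-\bm c-(\lambda-\widehat\lambda)=-\bm h$, so
\[
\widetilde\bLg^{\bm\psi^*}\bm\phi \,=\, \bm h\,\bm\phi \,\ge\, 0\,,
\]
i.e.\ $\bm\phi$ is \emph{sub}harmonic, not superharmonic as you claim. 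A recurrent (even exponentially stable) operator can certainly admit positive nonconstant subharmonic functions, so the Liouville-type argument for bounded-below superharmonic functions (\cref{P2.1}) does not apply. This is precisely why the paper's proof of this implication is delicate: it does not know a priori that $\bm\phi$ is bounded, and instead bounds the ratios $\bm{u}_n$ from the Dirichlet approximations by $\bm\Lyap^\alpha$ (using a H\"older-inequality computation to show $\widetilde\bLg^{\bm\psi^*}\bm\Lyap^\alpha\le-\alpha\kappa_1\bm\Lyap^\alpha$ off $K$), then lets $\alpha\to0$ to conclude $\bm{u}\le1$; only then does the strong maximum principle for a \emph{bounded subharmonic} function that attains its maximum deliver the contradiction. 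Your sketch skips the entire boundedness step, which is the real content.

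\textbf{The lower bound $\bm\Lyap\ge1$ in (b)$\Rightarrow$(a) is not free.}
You define $\bm\Lyap=\bm g_0/\bPsi^*$ for an arbitrary principal eigenfunction $\bPsi^*$ and assert this ratio can be normalized to be $\ge1$ ``from Harnack estimates and the recurrence already established.'' Harnack gives only local comparability; it says nothing about whether $\inf_{\Rd\times\cS}\bm g_0/\bPsi^*>0$, and a priori this ratio could decay to zero along some sequence going to infinity. The paper resolves this by \emph{constructing} the eigenfunction $\bPsi^*$ as a limit of scaled Dirichlet eigenfunctions $\kappa_n\bPsi_n$ that touch $\bPsi_{\bm h}$ from below, using a maximum-principle argument to force the touching point into a fixed ball $\bar\sB$; this yields $\bPsi^*\le\bPsi_{\bm h}$ globally and hence $\bm\Lyap\ge1$ by construction. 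Some argument of this type is indispensable. (Once simplicity is known via \cref{T1.5}, this pins down $\bPsi^*$ up to scaling, but the construction is still what delivers the global comparison.) A smaller point: $\bm h_0\in\sB_0^+$ is only assumed to vanish at infinity, not to have compact support, so the statement ``on $K^c$, $\bm h_0=0$'' is unjustified; the fix, as in the paper, is to pick $\sB$ so that $\bm h_0\le\delta$ on $\sB^c$ where $2\delta=\lamstr(\bm c-\bm h_0)-\lamstr$, giving $\bm h_0-2\delta\le-\delta$ off $\sB$.

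In short: (c)$\Rightarrow$(b) and the use of \cref{T1.5} are fine, but (a)$\Rightarrow$(c) rests on a wrong sign and a Liouville property that does not hold for subharmonic functions, and (b)$\Rightarrow$(a) hand-waves the global lower bound on $\bm\Lyap$ that the Lyapunov inequality requires.
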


It is interesting to observe from \cref{T1.5,T1.6} that monotonicity of the
principal eigenvalue implies simplicity of the eigenvalue. Another criterion that is 
often used to ensure simplicity of principal eigenvalue is
\emph{Agmon's minimal growth at infinity} introduced by Agmon in \cite{Agmon83} (see also 
\cite[Definition~8.2]{Berestycki-15} and \cite{Pinchover-88,Pinchover-89}).

\begin{definition}[Minimal growth at infinity]
An eigenpair $(\bPsi,\lambda)$ is said to have minimal growth at infinity,
if for any compact set $K\times\cS_1\subset\Rd\times\cS$ and 
for any $\bPhi\in\Sobl^{2,p}((K\times\cS_1)^c)$, $p>d$,
continuous and positive in $\Rd\times\cS$,
satisfying
\begin{equation*}\bLg\bPhi + (\bm{c}+\lambda)\bPhi \,\le\,  0 \quad \text{in\ } 
(K\times\cS_1)^c\,,
\end{equation*}
we have $\bPhi\ge \kappa \bPsi$ in $\Rd\times\cS$, for some $\kappa>0$.
\end{definition}

Then the following result is immediate.

\begin{theorem}\label{T1.7}
Suppose that $(\bPsi,\lambda)$ has the minimal growth at infinity.
Then $(\bPsi,\lambda)$ is a principal eigenpair and $\lambda$ is simple.
\end{theorem}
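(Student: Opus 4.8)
The plan is to prove \cref{T1.7} by a short argument that leverages \cref{T1.5}, \cref{T1.6}, or the abstract recurrence/regularity machinery, together with the observation that minimal growth at infinity is precisely the comparison property that forces an eigenpair to sit at the top of the spectrum. First I would show that $(\bPsi,\lambda)$ is a principal eigenpair, i.e.\ $\lambda = \lamstr$. Since $(\bPsi,\lambda)$ is by hypothesis an eigenpair, $\bcA\bPsi = -\lambda\bPsi$ in $\Rd\times\cS$ with $\bPsi>0$, so $\bPsi\in\Uppsi^+(\lambda)$ and hence $\lambda\le\lamstr$ by the definition \cref{E1.9}. For the reverse inequality, suppose toward a contradiction that $\lambda<\lamstr$. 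By \cref{T1.2}, there exists a positive $\bPhi\in\Sobl^{2,p}(\Rd\times\cS)$ with $\bcA\bPhi = -\lamstr\bPhi \le -\lambda\bPhi$, so in particular $\bLg\bPhi + (\bm c + \lambda)\bPhi \le 0$ on all of $\Rd\times\cS$, a fortiori on $(K\times\cS_1)^c$ for any compact $K\times\cS_1$. The minimal growth property then gives $\bPhi\ge\kappa\bPsi$ in $\Rd\times\cS$ for some $\kappa>0$.

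Next I would extract a contradiction from the reversed comparison. Set $\bm w\df\bPhi - \kappa\bPsi\ge0$; then $\bcA\bm w = -\lamstr\bPhi + \kappa\lambda\bPsi = -\lamstr\bm w - \kappa(\lamstr-\lambda)\bPsi$, so $(\bcA + \lamstr)\bm w = -\kappa(\lamstr-\lambda)\bPsi < 0$ strictly in $\Rd\times\cS$. Writing this as $(\bLg + \bm c + \lamstr)\bm w \le 0$ with $\bm w\ge0$, and using that $c_i + \lamstr$ may fail to be nonpositive, I would instead argue locally: on any bounded domain where $\bm w$ attains an interior zero, the strong maximum principle \hyperlink{P2a}{(P2a)} (applied after adding a large constant times $\bm w$ to both sides to make the zeroth-order coefficient nonpositive in that component, exactly as in the proof of the Lemma following \cref{R1.1}) forces $\bm w\equiv 0$ on the connected component; but $\bm w\equiv0$ contradicts $(\bcA+\lamstr)\bm w<0$. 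Hence $\bm w>0$ everywhere, and then one can improve the constant: $\kappa^* \df \sup\{\kappa>0\colon \bPhi\ge\kappa\bPsi\}$ is finite (else $\bPsi\le0$), and at $\kappa^*$ the function $\bPhi - \kappa^*\bPsi$ is nonnegative, touches zero somewhere (or at infinity), yet satisfies the same strict inequality — the same maximum-principle argument again yields a contradiction. Therefore $\lambda=\lamstr$, i.e.\ $(\bPsi,\lambda)$ is a principal eigenpair.

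For simplicity, suppose $(\bPhi,\lamstr)$ is any other eigenpair (so $\bPhi>0$, $\bcA\bPhi=-\lamstr\bPhi$). Then $\bPhi$ satisfies the minimal-growth hypothesis's defining inequality with equality, so $\bPhi\ge\kappa\bPsi$ for some $\kappa>0$; by symmetry of the roles — $\bPsi$ also has minimal growth, so the same argument with $\bPsi,\bPhi$ interchanged gives $\bPsi\ge\kappa'\bPhi$ — we get $\bPsi$ and $\bPhi$ comparable from both sides. Setting $\bm w\df\bPhi-\kappa^*\bPsi\ge0$ with $\kappa^*$ the optimal constant, one has $(\bcA+\lamstr)\bm w = 0$ and $\bm w\ge0$; if $\bm w\not\equiv0$ then $\bm w>0$ in $\Rd\times\cS$ by irreducibility of $\bm M$ and \hyperlink{P2a}{(P2a)}, so $\kappa^*$ was not maximal — contradiction. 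Hence $\bm w\equiv0$, i.e.\ $\bPhi=\kappa^*\bPsi$, proving simplicity.

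The main obstacle I anticipate is the careful handling of the sign of the zeroth-order coefficient: the operator $\bcA=\bLg+\bm c$ has potential $\bm c$ (equivalently $c_i+\lamstr$ after the shift) that is not assumed nonpositive, so the strong maximum principle \hyperlink{P2a}{(P2a)} cannot be applied verbatim to $\bm w$. The resolution — subtracting $\kappa\bPsi$ or $\kappa^*\bPsi$ and then, on each bounded piece where a nonpositive minimum would occur, adding a sufficiently large multiple of $\bm w$ to restore a nonpositive zeroth-order coefficient in the relevant component, as done in the Lemma after \cref{R1.1} — is routine but must be written with care so that the strict inequality $(\bcA+\lamstr)\bm w<0$ (resp.\ $=0$) is genuinely incompatible with a nonpositive interior minimum or with $\bm w\equiv0$. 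A secondary subtlety is that the minimal-growth property is stated for subsolutions defined merely on a neighborhood of infinity $(K\times\cS_1)^c$, so I must make sure the eigenfunctions produced by \cref{T1.2}, which are global, indeed qualify as test functions in the definition; this is immediate since global subsolutions restrict to subsolutions on $(K\times\cS_1)^c$.
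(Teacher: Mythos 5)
Your overall architecture is the same as the paper's — show $\lambda\le\lamstr$ from the definition, apply the minimal growth property of $(\bPsi,\lambda)$ to a principal eigenfunction $\bPhi$ produced by \cref{T1.2}, and then push to the extremal constant $\kappa^*=\sup\{\kappa>0\colon\bPhi\ge\kappa\bPsi\}$. The gap is in how you close the argument at $\kappa^*$. You write that $\bPhi-\kappa^*\bPsi$ ``touches zero somewhere (or at infinity), yet satisfies the same strict inequality --- the same maximum-principle argument again yields a contradiction.'' The strong maximum principle is a local statement: it requires the minimum to be attained at an interior point. If $\bPhi-\kappa^*\bPsi>0$ everywhere but $\inf_k\inf_{\Rd}(\Phi_k-\kappa^*\Psi_k)/\Psi_k=0$ (the ``touching at infinity'' case that you explicitly acknowledge can happen), no maximum-principle argument applies, and the proof stalls exactly there. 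The paper's fix --- and it is the entire point of the theorem --- is to invoke the minimal growth property \emph{a second time}: if $\bPhi-\kappa^*\bPsi>0$ on all of $\Rd\times\cS$, it is again a positive subsolution of $\bLg\cdot+(\bm c+\lambda)\cdot\le0$, so minimal growth gives $\bPhi-\kappa^*\bPsi\ge\kappa_2\bPsi$ for some $\kappa_2>0$, contradicting the maximality of $\kappa^*$. This is the non-local comparison that substitutes for a maximum principle when the infimum is not attained; omitting it leaves the theorem unproved.

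Two secondary issues. First, in your simplicity argument you invoke ``symmetry of the roles'' to deduce $\bPsi\ge\kappa'\bPhi$. That would require $\bPhi$ to have the minimal growth property, which is not part of the hypothesis and has not been established; the hypothesis only grants minimal growth to $(\bPsi,\lambda)$. Fortunately this step is unnecessary: the one-sided comparison $\bPhi\ge\kappa\bPsi$ together with the extremal-$\kappa$ argument (completed, as above, by re-applying minimal growth in the everywhere-positive case) already forces $\bPhi=\kappa^*\bPsi$. Second, once you know $\bPsi^*=\kappa^*\bPsi$ you do not need a separate simplicity argument at all — the paper derives $\lambda=\lamstr$ and uniqueness simultaneously from the identity $\bPsi^*=\kappa_1\bPsi$, which is a cleaner organization worth adopting.
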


\begin{proof}
Let $(\bPsi^*,\lamstr)$ be a principal eigenpair.
Then it follows from \cref{T1.2} that $\lamstr\ge \lambda$, and 
\begin{equation*}
\bLg\bPsi^* + (\bm{c}+\lambda)\bPsi^* \,\le\, 
\bLg\bPsi^* + (\bm{c}+\lamstr)\bPsi^*\,=\,  0 \quad \text{in\ } \Rd\,.
\end{equation*}
Thus, the minimal growth at infinity of
$(\bPsi,\lambda)$ implies that $\bPsi^*> \kappa \bPsi$ for some $\kappa>0$. Let 
\begin{equation*}
\kappa_1 \,\df\, \min_{k\in\cS}\, \inf_{\Rd}\,\frac{\Psi^*_k}{\Psi_k}\,.
\end{equation*}
We claim that $\bPsi^*- \kappa_1\bPsi\ge 0$, and that at least one of the components
must vanish in $\Rd$. If not, then we get $\bPhi=\bPsi^*- \kappa_1\bPsi> 0$ and
\begin{equation*}
\bLg\bPhi + (\bm{c}+\lambda)\bPhi \,\le\,  0\,, \quad \text{in\ } \Rd\,,
\end{equation*}
which implies that $\bPhi> \kappa_2\bPsi$ for some $\kappa_2>0$.
But this contradicts the definition of $\kappa_1$. Thus, one
of the components of $\bPhi$ must vanish in $\Rd$.
The strong maximum principle then implies that $\bPhi=0$.
Hence $\lambda=\lamstr$ and
$\bPsi^*=\kappa_1\bPsi$ in $\Rd\times\cS$. This completes the proof.
\end{proof}

Continuing, we show that minimal growth at infinity is equivalent to
monotonicity of the principal eigenvalue on the the right. For
the scalar equation an analogous result is established in
 \cite{ABG19} using probabilistic methods.
In contrast, the proof of \cref{T1.8} is
 analytical, and thus more general in nature.

\begin{theorem}\label{T1.8}
The following are equivalent.
\begin{itemize}
\item[(a)]
$\lamstr$ is monotone on the right at $c$.
\item[(b)]
$(\Psi^*, \lamstr)$ has minimal growth at infinity.
\end{itemize}
\end{theorem}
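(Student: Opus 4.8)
The plan is to route both statements through the recurrence of the twisted operator $\widetilde\bLg$ attached to $(\bPsi^*,\lamstr)$, using \cref{T1.5} as the bridge, and to exploit the conjugation identity
\begin{equation*}
\widetilde\bLg\bigl(\bPhi/\bPsi^*\bigr) \,=\, \frac{1}{\bPsi^*}\,\bigl(\bLg\bPhi + (\bm c + \lamstr)\bPhi\bigr)\,,
\end{equation*}
understood componentwise, which is a direct computation from the definition of $\widetilde\bLg$ in \cref{E-tbLg} together with $\bcA\bPsi^*=-\lamstr\bPsi^*$; in particular $\widetilde\bLg\bm1=0$, so the constant functions are $\widetilde\bLg$-harmonic. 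Consequently, a positive $\bPhi$ satisfies $\bLg\bPhi+(\bm c+\lamstr)\bPhi\le0$ on an open subset of $\Rd\times\cS$ precisely when $\bPhi/\bPsi^*$ is a nonnegative $\widetilde\bLg$-supersolution there.

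For \ttup{a}$\Rightarrow$\ttup{b} I would first invoke \cref{T1.5} to get that $\widetilde\bLg$ is recurrent. Given a compact $K\times\cS_1$ and a $\bPhi$ as in the definition of minimal growth, fix a ball $B_R\supset K$ (so the supersolution inequality holds on $(\overline{B_R}\times\cS)^c\subset(K\times\cS_1)^c$), set $\bm\phi\df\bPhi/\bPsi^*$ and $\delta\df\min_{\overline{B_R}\times\cS}\bm\phi>0$, and observe that $\bm\phi\wedge\delta$ is a bounded $\widetilde\bLg$-supersolution equal to $\delta$ on $\overline{B_R}\times\cS$ (here cooperativity is used, so that the componentwise minimum of two supersolutions is again a supersolution). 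Since $B_R\times\cS$ is recurrent for $\widetilde\bLg$, the only bounded solution of $\widetilde\bLg\bm v=0$ in $(\overline{B_R}\times\cS)^c$ with $\bm v=\delta$ on $\overline{B_R}\times\cS$ is the constant $\delta$; an exterior-domain comparison (exhaust $\Rd$ by balls, solve the annular Dirichlet problems, and pass to the limit using this uniqueness) then forces $\bm\phi\wedge\delta\equiv\delta$, i.e. $\bPhi\ge\delta\,\bPsi^*$ on $\Rd\times\cS$. Hence $(\bPsi^*,\lamstr)$ has minimal growth at infinity.

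For \ttup{b}$\Rightarrow$\ttup{a}, suppose minimal growth holds and fix $\bm h\in\sB^+_0(\Rd\times\cS)$. Adding the nonnegative diagonal term $\bm h$ can only shrink $\Uppsi^+(\lambda)$, so $\lamstr(\bm c+\bm h)\le\lamstr(\bm c)=\lamstr$, and I argue by contradiction assuming equality. Since $\bm h$ is bounded and leaves $\bLg$, the matrix $\bm M$ (hence its irreducibility) and \cref{H1.1} untouched, \cref{T1.2} applied to the potential $\bm c+\bm h$ yields $\bPhi>0$ in $\Sobl^{2,p}(\Rd\times\cS)$ with $\bLg\bPhi+(\bm c+\lamstr)\bPhi=-\bm h\bPhi\le0$ and $\bm h\bPhi\not\equiv0$. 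By minimal growth $\bPhi\ge\kappa\bPsi^*$ for some $\kappa>0$, so $\kappa_1\df\min_{k\in\cS}\inf_{\Rd}\Phi_k/\Psi^*_k>0$ and $\bm w\df\bPhi-\kappa_1\bPsi^*\ge0$ satisfies $\bLg\bm w+(\bm c+\lamstr)\bm w=-\bm h\bPhi\le0$. If $\bm w>0$ everywhere, minimal growth applied to $\bm w$ gives $\bm w\ge\kappa_2\bPsi^*$ for some $\kappa_2>0$, contradicting the minimality of $\kappa_1$; otherwise some component $w_{\hat\imath}$ vanishes at a point, the strong maximum principle \hyperlink{P2b}{\rm(P2b)} (after the usual shift of the zeroth-order term on a ball, using local boundedness of $c_{\hat\imath}$) forces $w_{\hat\imath}\equiv0$, the $\hat\imath$-th equation then reads $\sum_{j\ne\hat\imath}m_{\hat\imath j}w_j=-h_{\hat\imath}\Phi_{\hat\imath}$, and cooperativity ($m_{\hat\imath j}\ge0$, $\bm w\ge0$, $\bPhi>0$) forces both sides to vanish, i.e. $h_{\hat\imath}\equiv0$ and $m_{\hat\imath j}w_j\equiv0$ for all $j$. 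Feeding this into the irreducibility of $\bm M$ (partition $\cS$ into the indices on which $\bm w$ vanishes identically and its complement) propagates $w_j\equiv0$ — again by the scalar strong maximum principle — and $h_j\equiv0$ to all of $\cS$, so $\bm h\equiv0$, contradicting the nontriviality of $\bm h$.

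The main obstacle, in both directions, is the maximum-principle bookkeeping forced by the coupling: in \ttup{a}$\Rightarrow$\ttup{b} one must extract an exterior-domain minimum principle for the \emph{system} $\widetilde\bLg$ from the recurrence notion of \cref{D1.2}\,\ttup{ii} (the annular-exhaustion step), and in \ttup{b}$\Rightarrow$\ttup{a} one must propagate the vanishing of a single component of $\bm w$ across $\cS$, which is precisely where quasi-monotonicity ($m_{ij}\ge0$) and the irreducibility of $\bm M$ enter in an essential way.
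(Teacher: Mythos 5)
Your direction \ttup{b}$\Rightarrow$\ttup{a} is correct and is essentially the paper's argument: assuming $\lamstr(\bm c+\bm h)=\lamstr(\bm c)$, the principal eigenfunction $\widehat\bPsi$ for the potential $\bm c+\bm h$ is a positive supersolution at level $\lamstr$ for the potential $\bm c$, and minimal growth together with the strong maximum principle and the irreducibility of $\bm M$ (exactly the mechanism in the proof of \cref{T1.7}) forces $\widehat\bPsi=\kappa\bPsi^*$ and hence $\bm h\widehat\bPsi=0$, a contradiction. Your componentwise propagation of the vanishing of $\bm w$ is a correct, slightly more explicit version of what the paper does.

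The direction \ttup{a}$\Rightarrow$\ttup{b}, however, has a genuine logical gap: you invoke \cref{T1.5} in the direction ``monotone on the right $\Rightarrow$ $\widetilde\bLg^{\bm\psi^*}$ is recurrent,'' but in the paper that implication is itself deduced from \cref{T1.8}\,\ttup{a}$\Rightarrow$\ttup{b} --- the proof of that direction of \cref{T1.5} begins by citing \cref{T1.8} to replace monotonicity by minimal growth. The only direction of \cref{T1.5} proved independently of \cref{T1.8} is the converse one (recurrence $\Rightarrow$ monotonicity, via the Liouville property of \cref{P2.1}). So your argument is circular unless you supply an independent proof that monotonicity on the right implies recurrence of the twisted operator, and none is sketched. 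The paper's own proof of \ttup{a}$\Rightarrow$\ttup{b} bypasses the twisted operator: it solves $\bLg\bPsi_n+(\bm c+\lamstr)\bPsi_n=-\Ind_{B_1\times\cS}$ on $B_n$ with zero boundary data, shows that the normalizations $\max_j\max_{\Bar B_1}(\bPsi_n)_j$ must diverge (otherwise a limit would produce an $\bm h\in\sB^+_0(\Rd\times\cS)$ with $\lamstr(\bm c+\bm h)\ge\lamstr$, contradicting \ttup{a}), and then obtains a principal eigenfunction as a renormalized limit whose minimal growth follows from a direct comparison of the $\bm v_n$ with any exterior supersolution $\bPhi$ on the balls $B_n$. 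Two smaller remarks: your conditional reduction ``recurrence $\Rightarrow$ minimal growth'' is in itself a sound and clean observation (it just cannot be fed by \cref{T1.5} here); and in it the truncation $\bm\phi\wedge\delta$ is both delicate (the minimum of $\Sobl^{2,p}$ functions is only Lipschitz, so its supersolution property needs a viscosity-type justification) and unnecessary --- since $\bm\phi\ge\delta$ on $\Bar B_R\times\cS$, one can compare $\bm\phi$ directly with the annular solutions having data $\delta$ on $\partial B_R\times\cS$ and $0$ on $\partial B_n\times\cS$ and pass to the limit.
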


Next we relate the monotonicity property of $\lamstr$ on the right with the stochastic
representation of the principal eigenfunction $\bPsi^*$. To do so we need to impose
mild restrictions on the coefficients of $\bLg$ to ensure the existence of a
strong solution.
\subsubsection{Description of the probabilistic model}\label{S-prob}
We introduce the regime switching diffusion process.
This is a process $(X_{t}, S_{t})$ in $\Rd\times \cS$
governed by the following stochastic differential equations:
\begin{equation}\label{E1.14}
\begin{aligned}
\D X_t &\,=\, b(X_t,S_t) \D t + \upsigma(X_t,S_t)\, \D W_t\,,\\
\D S_t &\,=\, \int_{\RR} h(X_t,S_{t^-},z)\wp(\D t, \D z)\,,
\end{aligned}
\end{equation}
for $t\ge 0$. Here
\begin{itemize}
\item[(i)]
$S_{0}$ is a prescribed $\cS=\{1,2,\dotsc,N\}$ valued random variable;
\item[(ii)]
$X_{0}$ is a prescribed $\Rd$ valued random variable;
\item[(iii)]
$W$ is a $d$-dimensional standard Wiener process;
\item[(iv)]
$\wp(\D t,\D z)$ is a Poisson random measure on $\RR_{+}\times\RR$ with intensity
$\D t\times \mu(\D z)$, where $\mu$ is the Lebesgue measure on $\RR$;
\item[(v)]
$\wp(\cdot,\cdot)$, $W(\cdot)$, $X_{0}$, and $S_{0}$ are independent;
\item[(vi)]
The function $h\colon\Rd\times\cS \times \RR \to \RR$ is defined by
\begin{equation*}
h(x,i,z)\,\df\,\begin{cases}
j - i & \text{if}\,\, z\in \Vt_{ij}(x),\\[1mm]
0 & \text{otherwise},
\end{cases}
\end{equation*}
where for $i,j\in\cS$ and fixed $x$, $\Vt_{ij}(x)$ are left closed right open
disjoint intervals of $\RR$ having length $m_{ij}(x)$.
\end{itemize}

Note that $\bm{M}(x)$ can be interpreted as the rate matrix of the
Markov chain $S_{t}$ given that $X_t=x$. In other words, 
\begin{equation*}
\Prob(S_{t+h}=j\, |\, X_t, S_t) \,=\, \begin{cases}
m_{S_t j}(X_t)h + \sorder(h) & \text{if\ } S_t\neq j\,,
\\[2mm]
1+ m_{S_t j}(X_t)h + \sorder(h) & \text{if\ } S_t = j\,,
\end{cases}
\end{equation*}
and $X$ behaves like an ordinary diffusion process governed by \eqref{E1.14} between two consecutive 
jumps of $S$. In addition to \eqref{irred},
we impose the following assumptions to guarantee existence of solution of \cref{E1.14}.

\begin{itemize}
\item[\hypertarget{A1}{{(A1)}}]
\emph{Local Lipschitz continuity:\/}
The function
$\upsigma=\bigl[\upsigma^{ij}\bigr]\colon\RR^{d}\times\cS\to\RR^{d\times d}$
is continuous and locally Lipschitz in $x$ with a Lipschitz constant $C_{R}>0$
depending on $R>0$.
In other words, with $\norm{\upsigma}\df\sqrt{\trace(\upsigma\upsigma\transp)}$,
we have
\begin{equation*}
\norm{\upsigma(x,k) - \upsigma(y,k)}^2
\,\le\, C_{R}\,\abs{x-y}^2 \qquad\forall\,x,y\in B_R\,,\ \forall\,k\in\cS\,.
\end{equation*}
The function $b\colon\Rd\times\cS\to\Rd$
is assumed to be Borel measurable and locally bounded.

\medskip
\item[\hypertarget{A2}{{(A2)}}]
\emph{Affine growth condition:\/}
$b(x,k)$ and $\upsigma(x,k)$ satisfy a global growth condition of the form
\begin{equation*}
\langle b(x,k),x\rangle^{+} + \norm{\upsigma(x,k)}^{2} \,\le\,C_0
\bigl(1 + \abs{x}^{2}\bigr) \qquad \forall\, (x,k)\in\Rd\times\cS\,,
\end{equation*}
for some constant $C_0>0$.

\medskip
\item[\hypertarget{A3}{{(A3)}}]
\emph{Nondegeneracy:\/}
For each $R>0$, it holds that
\begin{equation*}
\sum_{i,j=1}^{d} a_k^{ij}(x)\zeta_{i}\zeta_{j}
\,\ge\,C^{-1}_{R} \abs{\zeta}^{2} \qquad\forall\, (x,k)\in B_R\times\cS\,,
\end{equation*}
and for all $\zeta=(\zeta_{1},\dotsc,\zeta_{d})\transp\in\RR^{d}$,
where, $a\df \frac{1}{2}\upsigma \upsigma\transp$.
\end{itemize}

It is well known that under hypotheses
\hyperlink{A1}{{(A1)}}--\hyperlink{A3}{{(A3)}},
\cref{E1.1} has a unique strong solution with $X\in C(\RR_{+};\Rd)$,
and $S_{t}\in \mathcal{D}(\RR_{+};\cS)$, where $\mathcal{D}(\RR_{+};\cS)$
denotes the space of all right continuous functions from $\RR_{+}$ to $\cS$
having left limit \cite{AGM93} (cf.\ \cite[Remark~5.1.2]{book}).
Moreover, the solution $(X_t,S_t)$ is a  Feller process
(see \cite[Theorem~2.1]{AGM93} and \cite[Remark~5.1.6]{book}) and therefore,
a strong Markov process. Also, the ergodic behavior of $Y_t \df (X_t,S_t)$ depends 
heavily on the coupling coefficients
$\{m_{ij}\}$ (cf.\ \cite{book}, \cite[Chapter~2]{YZ10}).

For a ball $\sB$, centered at $0$, we denote by
$\uuptau$ the first hitting time to $\sB\times\cS$, that is,
$$\uuptau \,\df\, \inf\{t>0\,\colon X_t\in \sB\}\,.$$
\Cref{T1.9}, which follows, asserts the equivalence between monotonicity on the right and
a stochastic representation of $\bPsi^*$.

\begin{theorem}\label{T1.9}
The following are equivalent.
\begin{itemize}
\item[(a)]
$\lamstr$ is monotone on the right at $c$.
\item[(b)]
For some ball $\sB$ we have
\begin{equation}\label{ET2.7A}
\Psi^*_k(x) \,=\,  \Exp_{x, k}
\left[\E^{\int_0^{\uuptau}(\bm{c}(X_t, S_t)-\lamstr)\, \D{t}}
\bPsi^*(X_{\uuptau}, S_{\uuptau})\Ind_{\{\uuptau<\infty\}}\right],
\quad (x,k)\in\sB^c\times\cS\,.
\end{equation}
\end{itemize}
\end{theorem}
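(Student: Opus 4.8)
The plan is to invoke \cref{T1.5} to replace (a) by the recurrence of the twisted operator $\widetilde\bLg$ associated with $(\bPsi^*,\lamstr)$, and then to read both sides of \eqref{ET2.7A} through the ground-state (Doob $h$-) transform encoded in the definition \eqref{E-tbLg} of $\widetilde\bLg$. Throughout, hypotheses \hyperlink{A1}{(A1)}--\hyperlink{A3}{(A3)} are assumed, so $(X,S)$ is a well-defined nonexplosive strong Markov process, and the exit times $\uptau_n$ from $B_n$ increase to $\infty$ almost surely.

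Fix a ball $\sB$, let $\bm{w}$ be the right-hand side of \eqref{ET2.7A} on $\sB^c\times\cS$, and put $\Lambda_t\df\int_0^t\bigl(\bm{c}(Y_s)-\lamstr\bigr)\,\D s$. Since $\bPsi^*\in\Sobl^{2,p}$, $p>d$, solves the eigenvalue equation $\bigl(\bLg+\bm{c}-\lamstr\bigr)\bPsi^*=\bm{0}$ in $\Rd\times\cS$, the It\^o--Krylov formula gives that $t\mapsto\E^{\Lambda_t}\Psi^*_{S_t}(X_t)$ is a nonnegative local martingale, hence a supermartingale; optional stopping at $\uuptau\wedge\uptau_n$ together with Fatou's lemma then yield that $\bm{w}$ is well defined with $\bm{0}\le\bm{w}\le\bPsi^*$ on $\sB^c\times\cS$. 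Passing to the limit $n\to\infty$ in the Dirichlet problems on $(B_n\setminus\overline\sB)\times\cS$ (interior $W^{2,p}$ estimates together with a barrier at the smooth $\partial\sB$) shows, moreover, that away from $\sB$ the function $\bm{w}$ lies in $\Sobl^{2,p}$ and solves the same linear equation as $\bPsi^*$, that $\bm{w}$ is continuous up to $\partial\sB$ with $\bm{w}=\bPsi^*$ there, and that $\bm{w}$ is the \emph{minimal} nonnegative solution of that Dirichlet problem. This Feynman--Kac verification is the technical core of the argument and the step I expect to be the most delicate, since $\bPsi^*$ is only a Sobolev solution and the coefficients are merely locally bounded beyond \hyperlink{A1}{(A1)}--\hyperlink{A3}{(A3)}.

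The ground-state transform is next. As $\bPsi^*>0$, the componentwise quotient $\bm{v}\df\bm{w}/\bPsi^*$ belongs to $\Sobl^{2,p}$, and a direct computation from \eqref{E-tbLg} gives $\widetilde\bLg\bigl(\bm{\phi}/\bPsi^*\bigr)=\tfrac{1}{\bPsi^*}\bigl(\bLg+\bm{c}-\lamstr\bigr)\bm{\phi}$ for every such $\bm{\phi}$; hence $\bm{v}$ is the minimal nonnegative solution of
\begin{equation*}
\widetilde\bLg\bm{v}=\bm{0}\ \text{ in }\ \sB^c\times\cS\,,\qquad \bm{v}=\bm{1}\ \text{ on }\ \partial\sB\times\cS\,,
\end{equation*}
with $\bm{0}\le\bm{v}\le\bm{1}$. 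Consequently \eqref{ET2.7A} holds precisely when $\bm{w}=\bPsi^*$, that is, when $\bm{v}\equiv\bm{1}$. (Equivalently, removing the weight $\E^{\Lambda_t}$ by a change of measure rewrites $w_k(x)=\Psi^*_k(x)\,\widetilde q_k(x)$, where $\widetilde{\bm q}$ is the probability that the twisted process, started off $\sB$, reaches $\sB$ before a possible explosion; so $\bm{v}=\widetilde{\bm q}$.)

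Finally I would identify ``$\bm{v}\equiv\bm{1}$'' with (a). Since $\widetilde{\bm M}$ is a stochastic rate matrix, $\widetilde\bLg\bm{1}=\bm{0}$, so $\bm{1}$ always solves the displayed problem; and $\widetilde{\bm M}$ is irreducible on $\Rd$ because $\widetilde m_{ij}>0\iff m_{ij}>0$. If $\bm{v}\not\equiv\bm{1}$, the strong maximum principle for cooperative systems (using this irreducibility; cf.\ \hyperlink{P2a}{\rm(P2a)}--\hyperlink{P2b}{\rm(P2b)}) forces $\bm{1}-\bm{v}>\bm{0}$ throughout $(\overline\sB)^c\times\cS$, so the twisted Dirichlet problem with zero data has a nontrivial bounded solution and $\sB\times\cS$ is not recurrent for $\widetilde\bLg$; conversely, if $\sB\times\cS$ is recurrent for $\widetilde\bLg$, then uniqueness of bounded solutions together with $\widetilde\bLg\bm{1}=\bm{0}$ forces $\bm{v}=\bm{1}$. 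Hence \eqref{ET2.7A} is equivalent to the recurrence of $\sB\times\cS$ for $\widetilde\bLg$, which by the recurrence/transience dichotomy for irreducible $\bm{M}$ (see \cref{R1.1}; cf.\ \cref{S2}) is equivalent to $\widetilde\bLg$ being recurrent, and by \cref{T1.5} this is precisely (a). In particular, (a) yields the representation \eqref{ET2.7A} for \emph{every} ball $\sB$.
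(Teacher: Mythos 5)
Your route is genuinely different from the paper's. The paper proves (a)\,$\Rightarrow$\,(b) by a regularization: with $\bm F_\alpha = \bm c+\lamstr-\alpha$, it shows (using \cref{L2.4} and recurrence of the twisted process) that the normalizing constants $\Gamma_\alpha$ are finite for $\alpha>0$ and diverge as $\alpha\downarrow0$, builds the auxiliary Dirichlet solutions $\bPhi^\alpha_n$, and passes to the limit twice to land on \cref{ET2.7A}; it proves (b)\,$\Rightarrow$\,(a) directly by a maximum-principle argument pitting the stochastic representation of $\bPsi^*$ against the supersolution inequality for a competing eigenfunction. You instead express both directions through a single object: the minimal nonnegative solution $\bm w$ of the exterior problem, its $h$-transform $\bm v=\bm w/\bPsi^*$, and the resulting equivalence between \cref{ET2.7A} and recurrence of $\sB\times\cS$ for $\widetilde\bLg$, closed off by \cref{T1.5} and \cref{P2.2}. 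Your reformulation is more symmetric and conceptually cleaner; the paper's $\Gamma_\alpha$ device is arguably more robust for the Feynman--Kac verification (it avoids any delicate behavior exactly at $\lamstr$), and it sidesteps the need to prove that the probabilistic right-hand side of \cref{ET2.7A} is itself a $\Sobl^{2,p}$ solution with the correct boundary trace, which you flag as the delicate step.

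There is, however, a logical gap in the final paragraph. You assert that \cref{ET2.7A} ($\bm v\equiv\bm1$) is \emph{equivalent} to recurrence of $\sB\times\cS$, but both of your displayed implications prove the same direction: ``$\bm v\not\equiv\bm1\Rightarrow$ not recurrent'' is the contrapositive of ``recurrent\,$\Rightarrow$\,$\bm v\equiv\bm1$,'' which is what your ``conversely\dots'' sentence also proves. What is missing, and what the (b)\,$\Rightarrow$\,(a) direction actually requires, is ``$\bm v\equiv\bm1\Rightarrow$ recurrent.'' This does hold, but you must argue it---for instance, from the minimality of $\bm w$ that you established: if $\bm u$ is a bounded solution of $\widetilde\bLg\bm u=0$ in $(\overline\sB\times\cS)^c$ with $\bm u=0$ on $\overline\sB\times\cS$ and $\abs{\bm u}\le1$, then $\bm 1\pm\bm u$ are nonnegative solutions of the same exterior problem with boundary data $\bm1$, so minimality forces $\bm 1\pm\bm u\ge\bm v=\bm1$, i.e.\ $\bm u=\bm0$; existence follows as in the proof of \cref{T1.1}, so $\sB\times\cS$ is recurrent in the sense of \cref{D1.2}, and then \cref{P2.2} upgrades this to recurrence of $\widetilde\bLg$. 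Alternatively one can invoke the probabilistic recurrence/transience dichotomy directly, as you gesture at, but that argument should be made explicit since it is the only thing carrying (b)\,$\Rightarrow$\,(a) in your plan.
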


Before we conclude this section, let us compare the contribution
of this paper with the existing work. 
The notion of monotonicity was introduced in the \cite{ABS19}, and results analogous to 
\cref{T1.4,T1.5,T1.6} were proved in for a scalar operator using probabilistic 
methods. In particular, $a$ was assumed to be locally Lipschitz, $b$ was assumed to satisfy 
\hyperlink{A2}{{(A2)}} and $c$ was assumed to be bounded from below.
In this article we do not impose such restrictions.
So the arguments in \cite{ABS19} do not work for us in this article.
As can be seen, \cref{T1.9} is the only result that relies
on the probabilistic model,
but the proof does not use the results in \cite{ABS19}.
This is because of 
the nonavailability of a suitable Girsanov transformation for a general
regime switching diffusion.
Instead, we study the parabolic system (see \cref{L2.4}) to find a
substitute for Girsanov's transformation for this model.
In this manner, we obtain an explicit form for the twisted operator
in \cref{E-tbLg} for elliptic systems.

\subsection{Notation}\label{S-not}
$\bm F > \kappa$ would mean $F_{k} > \kappa\,$ for all $k\in\cS$ 
and $\bm F \ge 0$ means $F_{k} \ge 0$ for all $k\in\cS$. $\bm F \gneq 0$ means $F_{k} \ge 
0$ for all $k\in\cS$
and $\sum_{k\in\cS} F_k > 0$ on a set of positive Lebesgue measure in $\Rd$. By
$B_r(x)$ we denote the ball of radius $r$ around $x$ and for $x=0$ we simply denote it by 
$B_r$.

If $\cX(Q)$ is a topological space of real-valued functions on a domain $Q\subset\Rd$,
we denote by $\cX(Q\times\cS)$ the space $\bigl(\cX(Q)\bigr)^N$ endowed with
the product topology inherited from $\cX(Q)$.
As already explained in \cref{E-vec},
if $f$ is a real valued function on $Q\times\cS$, we let
$f_k(\cdot)\df f(\cdot,k)$, and identify $f$ with $\bm{f} \df (f_1,\dotsc,f_N)$,
which is viewed as a vector-valued function on $Q$.
If $\cX(Q)$ is endowed with a norm $\norm{\cdot}_{\cX(Q)}$, we let
$\norm{f}_{\cX(Q\times\cS)}\df \sum_{k\in\cS} \norm{f_k}_{\cX(Q)}$
for $f\in\cX(Q\times\cS)$.


\section{Proofs of main results}\label{S2}
In this section we present the proofs of the main results. 
The proof of \cref{T1.1} requires the following
Liouville property.

\begin{proposition}\label{P2.1}
Suppose that $\bLg$ is recurrent. Then any $\bm{V}\in\Sobl^{2, d}(\Rd\times\cS)$
which is bounded from below in $\Rd\times\cS$ and satisfies $\bLg \bm{V}\le 0$ in 
$\Rd\times\cS$ must be equal to a constant, that is, $\bm{V}=(c,c,\ldots,c)$.
\end{proposition}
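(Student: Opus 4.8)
The plan is to normalize $\bm{V}$, use the strong maximum principle to reduce to the case of an everywhere positive $\bm{V}$ with infimum $0$, and then invoke recurrence to rule that case out. Since $\bm{M}$ has zero row sums, $\bLg$ annihilates constant vectors, so with $c_0\df\inf_{\Rd\times\cS}\bm{V}\in\RR$ we may replace $\bm{V}$ by $\bm{V}-c_0\bm{1}$ and assume henceforth that $\bm{V}\ge0$, $\inf_{\Rd\times\cS}\bm{V}=0$, and $\bLg\bm{V}\le0$ in $\Rd\times\cS$; it then suffices to prove that $\bm{V}\equiv0$.

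First I would show that $\bm{V}$ cannot vanish anywhere unless $\bm{V}\equiv0$. Suppose $V_{i_0}(x_0)=0$ for some $(x_0,i_0)$. Since $(\bLg\bm{V})_{i_0}\le0$, $\bm{V}\ge0$ on $\partial B_R\times\cS$, and $V_{i_0}$ attains the nonpositive minimum $0$ at the interior point $x_0$, the strong maximum principle \hyperlink{P2b}{\rm(P2b)} (applied on balls $B_R\ni x_0$ with $c=0$) forces $V_{i_0}\equiv0$ on $\Rd$. Substituting this back into $(\bLg\bm{V})_{i_0}\le0$ and using $\bm{V}\ge0$ together with quasi-monotonicity $m_{i_0j}\ge0$ for $j\ne i_0$, we get $\sum_{j\ne i_0}m_{i_0j}V_j\le0$, hence $m_{i_0j}V_j\equiv0$ for every $j$ since each summand is nonnegative. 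Now the irreducibility of $\bm{M}$ (\cref{D1.1}) propagates the vanishing: putting $\cS_0\df\{k\in\cS\colon V_k\equiv0\}$, if $\varnothing\ne\cS_0\subsetneq\cS$ there exist $i\in\cS_0$ and $j\notin\cS_0$ with $\abs{\{m_{ij}>0\}}>0$; then $V_j$ vanishes on a set of positive measure, hence --- being continuous and nonnegative --- at some point, and the previous step yields $V_j\equiv0$, contradicting $j\notin\cS_0$. Thus $\cS_0=\cS$ and $\bm{V}\equiv0$. From now on I may therefore assume $\bm{V}>0$ on all of $\Rd\times\cS$, and I aim for a contradiction with $\inf_{\Rd\times\cS}\bm{V}=0$.

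The heart of the argument is to show that recurrence forces $\inf_{\Rd\times\cS}\bm{V}>0$. Fix $r>0$ and set $m_r\df\min_{\bar{B}_r\times\cS}\bm{V}>0$. For $n>r$, let $\bm{u}_n$ be the solution, provided by the Dirichlet theory of \cref{L2.1}, of $\bLg\bm{u}_n=0$ in $(B_n\setminus\bar{B}_r)\times\cS$ with $\bm{u}_n=m_r\bm{1}$ on $\partial B_r\times\cS$ and $\bm{u}_n=0$ on $\partial B_n\times\cS$. The comparison principle on bounded domains --- which is immediate from \hyperlink{P2a}{\rm(P2a)} and $\bLg(m_r\bm{1})=0$ --- gives $0\le\bm{u}_n\le m_r\bm{1}$, monotonicity $\bm{u}_n\le\bm{u}_{n+1}$ on the smaller annulus, and, crucially, $\bm{V}\ge\bm{u}_n$ on $(B_n\setminus\bar{B}_r)\times\cS$, since $\bLg(\bm{V}-\bm{u}_n)=\bLg\bm{V}\le0$ there while $\bm{V}-\bm{u}_n\ge0$ on both boundary spheres. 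Interior elliptic estimates and monotone convergence produce a bounded limit $\bm{u}_\infty\df\lim_n\bm{u}_n$ with $0\le\bm{u}_\infty\le m_r\bm{1}$ and $\bLg\bm{u}_\infty=0$ in $(\bar{B}_r\times\cS)^c$, and squeezing $\bm{u}_\infty$ between a fixed $\bm{u}_{n_0}$ and $m_r\bm{1}$ shows $\bm{u}_\infty$ extends continuously by $m_r\bm{1}$ across $\partial B_r\times\cS$. Hence $\bm{u}_\infty$ is a \emph{bounded} solution of the Dirichlet problem $\bLg\bm{u}=0$ in $(\bar{B}_r\times\cS)^c$, $\bm{u}=m_r\bm{1}$ on $\bar{B}_r\times\cS$; since $\bLg$ is recurrent this problem has a unique bounded solution, and the constant function $m_r\bm{1}$ is one such, so $\bm{u}_\infty\equiv m_r\bm{1}$. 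Letting $n\to\infty$ in $\bm{V}\ge\bm{u}_n$ then gives $\bm{V}\ge m_r\bm{1}$ on $(\bar{B}_r\times\cS)^c$, and together with $\bm{V}\ge m_r\bm{1}$ on $\bar{B}_r\times\cS$ we conclude $\inf_{\Rd\times\cS}\bm{V}\ge m_r>0$, the desired contradiction. Therefore $\bm{V}\equiv0$, i.e.\ the original $\bm{V}$ is the constant vector $(c_0,\dots,c_0)$.

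The step I expect to be the main obstacle is precisely the passage to the limit in the annular problems and the verification that $\bm{u}_\infty$ genuinely qualifies as a \emph{bounded} solution of the Dirichlet problem in the definition of recurrence --- the continuity up to $\partial B_r\times\cS$ and the global bound on $\bm{u}_\infty$ --- because it is exactly this that licenses the uniqueness clause in \cref{D1.2}\,(ii) and pins $\bm{u}_\infty$ down to the constant $m_r\bm{1}$. Morally this is the analytic counterpart of the probabilistic fact that a nonnegative supermartingale evaluated along a recurrent process converges almost surely and hence is constant along trajectories; the remaining ingredients --- the reduction to a single component via quasi-monotonicity, the comparison principles on bounded domains, and the irreducibility bootstrap --- are routine given \hyperlink{P2a}{\rm(P2a)}--\hyperlink{P2b}{\rm(P2b)}.
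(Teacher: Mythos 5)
Your proof is correct and follows essentially the same route as the paper's: approximate by annular Dirichlet problems, compare the solutions with $\bm{V}$, use the uniqueness of the bounded exterior solution (i.e.\ recurrence) to identify the limit with a constant and so bound $\bm{V}$ from below, and then propagate constancy across components via the strong maximum principle and the irreducibility of $\bm{M}$. The only packaging difference is that you argue by contradiction using recurrence of a full set $B_r\times\cS$, whereas the paper argues directly, showing $\inf_{\Rd\times\cS}\bm{V}\ge\min_{j}V(z,j)$ for every $z$ by invoking recurrence of small single-component sets $B_\varepsilon(z)\times\{i\}$.
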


\begin{proof}
With no loss of generality we may assume that $\bm{V}>0$.
We pick some $z\in \Rd$ and then we show that 
$\inf_{j\in\cS}\inf_{\Rd} V_j\ge \min_{j\in \cS}V(z, j)$. 
Let $i\in\cS$ be such that $\min_{j\in \cS}V(z, j)= V(z, i)$.
Given $\delta\in (0, V(z, i))$, fix $\varepsilon>0$ small enough so that
$V(x, j)> V(z, i)-\delta$ for all $x\in B_\varepsilon(z)$ and $j\in\cS$.
Consider the sequence of solutions
$\bm{w}_n$ satisfying
\begin{align*}
\bLg \bm{w}_n &\,=\,0 \quad \text{in\ } 
(B_n(z)\setminus B_\varepsilon(z))\times\cS,
\\
\bm{w}_n &\,=\,0 \quad \text{on\ } \partial B_n(z)\times\cS,
\\
\bm{w}_n&\,=\, V(z, i)-\delta\quad \text{on\ } \partial B(z, \epsilon)\times \{i\}\,.
\end{align*}
Applying the maximum principle \cite[Theorem~3]{Sirakov}, it is easy to see that
\begin{equation}\label{EP2.1A}
0\le \bm{w}_n(x, j)\le \min\,\Bigl\{\max_j\,\max_{B_\varepsilon(z)}\,\bm{V}, V(x, j)\Bigr\}
\quad 
\text{in\ }
\bigl(B_n(z)\setminus B_\varepsilon(z)\bigr)\times\cS.
\end{equation}
Letting $n\to\infty$ and using standard elliptic estimates we find a bounded solution
$\bm{w}$ of 
\begin{align*}
\bLg \bm{w} =0 \quad \text{in\ } \Bar{B}^c_\varepsilon(z)\times\cS,\quad
\bm{w} = V(z, i)-\delta\quad \text{on\ } \partial B(z, \epsilon)\times \{i\}\,.
\end{align*}
Using the recurrence of $\bLg$ it is evident that $\bm{w}=V(z, i)-\delta$
and then, using \eqref{EP2.1A} we obtain $\bm{V}(z, i)-\delta\le \bm{V}(x, j)$
for all $x\in B^c_\varepsilon(z)$ and $j\in\cS$. Now letting $\delta, \varepsilon\to 0$
gives us
$$\inf_{j\in\cS}\,\inf_{\Rd}\, V_j \,\ge\, V(z, i) \,=\, \min_{j\in \cS}\,V(z, j)\,.$$
This of course, implies that $\bm{V}$ attains its minimum in $\Rd\times\cS$.
Let $\xi(x, j)=V(x, j)-V(z, i)$. Then $\bm{\xi}\ge 0$ and
$$\trace\bigl(a_i(x)\grad^2 \xi_{i}(x)\bigr)
+ b_i(x)\cdot \grad \xi_{i}(x) + m_{ii}(x)\xi_{i}(x)
\le (\bLg\bm{\xi})_i \,\le\, 0\quad \text{in\ } \Rd\,.$$
Since $m_{ii}\le 0$, by the strong maximum principle, this implies $\xi_i=0$ in $\Rd$.
This also implies that
\begin{equation}\label{EP2.1B}
0\,=\, (\bLg\bm{\xi})_j \,=\, \sum_{k\neq j} m_{jk}(x) \xi_k(x)\quad \text{in\ } \Rd\,.
\end{equation}
Using \eqref{irred} we find $k\in\cS\setminus\{i\}$ so that $m_{ik}(y)>0$ for some 
$y\in\Rd$. Hence from \eqref{EP2.1B} we get $\xi_k(y)=0$. Then repeating the above 
argument once again we have $\xi_k=0$ in $\Rd$. Now we can repeat the same argument
with the help of \eqref{irred} to arrive at $\bm{\xi}=0$ in $\Rd\times\cS$. This completes 
the proof.
\end{proof}

We also need a maximum principle which is a mild extension of \cite[Theorem~1]{Sirakov}. 
Consider a collection of smooth bounded domains $\{D_i\}$ with the property that $D_i\subset 
D$ for $1\le i\le N$. Let $g_i:\overline{D}\setminus D_i\to \RR$ be 
given continuous functions for $1\le i\le N$, and
$$G\,\df\,\max_i\, \max_{\bar{D}\setminus D_i}\, g^+_i\,.$$

\begin{lemma}\label{L2.1}
Let $D_i\subset D\subset\Rd$, $i\in\cS$, be bounded domains, and $\bm{c}\le 0$.
Suppose that $u_i\in \Sobl^{2,d}(D_i)\cap \cC(\overline{D})$
satisfy 
$$(\bLg \bm{u})_i + c_i u_i\,\ge\, -f_i^+\quad \text{in\ } D_i, \quad
u_i=g_i\quad \text{in\ } D\setminus D_i,\quad \text{for all\ }i\in\cS\,,$$
with $\bm{c}\le 0$
Then for some constant $C$, not dependent on $\bm{u}, \bm{f}$ and $\bm{g}$, we have
\begin{equation}\label{EL2.1A}
\max_{i}\, \sup_{D_i} u^+_i \, \le\, \left(G + C\,\sum_{i=1}^d 
\norm{f^+_i}_{L^d(D_i)}\right)\,.
\end{equation}
Furthermore, if $f_i\in L^d(D_i)$ for $1\le i\le N$, then there exists a unique solution 
to 
\begin{equation}\label{EL2.1B}
(\bLg \bm{u})_i + c_i u_i= f_i\quad \text{in\ } D_i, \quad
u_i=g_i\quad \text{in\ } D\setminus D_i,\quad \text{for all\ } i\in\cS\,.
\end{equation}
\end{lemma}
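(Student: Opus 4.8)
The plan is to establish the a priori bound \eqref{EL2.1A} first; uniqueness in \eqref{EL2.1B} then drops out immediately, and existence follows by the method of continuity. For \eqref{EL2.1A} I would adapt the Aleksandrov--Bakelman--Pucci argument behind \cite[Theorem~1]{Sirakov}, the only genuinely new point being the bookkeeping needed for the distinct domains $D_i$. Extend each $u_i$ to $\overline D$ by setting $u_i\df g_i$ on $\overline D\setminus D_i$ (consistent since $u_i\in\cC(\overline D)$), put $v\df\max_{i\in\cS}u_i$ and $M\df\max_{i\in\cS}\sup_{\overline D}u_i^+$, and assume $M>G$ (otherwise there is nothing to prove). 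Note that $c_{ij}\ge0$ for $i\ne j$ together with $c_i=\sum_{j\in\cS}c_{ij}\le0$ forces $c_{ii}\le0$, and that $\{x\in\overline D:v(x)>G\}\subset\cup_i D_i\subset D$, since $u_i=g_i\le G$ on $\overline D\setminus D_i$. Now run the concave-envelope/contact-set argument for $v$: at a point $x$ of the upper contact set with $v(x)>G$, choosing $k$ with $u_k(x)=v(x)$ forces $x\in D_k$, so the differential inequality $(\bLg\bm u)_k+c_ku_k\ge-f_k^+$ is available at $x$; since $u_j(x)\le u_k(x)$ for every $j$ and $c_{kj}\ge0$ for $j\ne k$,
\begin{equation*}
\sum_{j\in\cS}c_{kj}(x)u_j(x)\,\le\,\Bigl(\sum_{j\in\cS}c_{kj}(x)\Bigr)u_k(x)\,=\,c_k(x)u_k(x)\,\le\,0\,,
\end{equation*}
using $c_k\le0$ and $u_k(x)>G\ge0$ in the last step, and therefore
\begin{equation*}
\trace\bigl(a_k(x)\grad^2 u_k(x)\bigr)+b_k(x)\cdot\grad u_k(x)\,\ge\,-f_k^+(x)\,.
\end{equation*}
This is exactly the pointwise inequality that feeds the scalar ABP estimate for the operators $L_k$ (whose drifts $b_k$ are bounded, hence in $L^d$, on the bounded sets $D_k$), and integrating over the contact set --- which lies in $\cup_i D_i$ --- yields $M\le G+C\sum_{i\in\cS}\norm{f_i^+}_{L^d(D_i)}$, with $C$ depending only on $d$, the ellipticity constants on $D$, $\diam D$, and $\max_i\norm{b_i}_{L^d(D_i)}$; this is \eqref{EL2.1A}.

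For \emph{uniqueness}, if $\bm u$ and $\bm u'$ both solve \eqref{EL2.1B}, then $\bm v\df\bm u-\bm u'$ satisfies $(\bLg\bm v)_i+c_iv_i=0$ in $D_i$ and $v_i=0$ on $\overline D\setminus D_i$, so applying the bound just proved to $\bm v$ and to $-\bm v$ (with $\bm f=0$ and $G=0$) gives $v_i\le0$ and $-v_i\le0$ in $D_i$, i.e.\ $\bm v\equiv0$. For \emph{existence}, assuming $f_i\in L^d(D_i)$, I would use the method of continuity along the family $(\bcA_t\bm f)_k\df t(\bcA\bm f)_k+(1-t)(\Delta f_k-f_k)$, $t\in[0,1]$. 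Each $\bcA_t$ is again a weakly-coupled cooperative system, uniformly elliptic on $D$, with nonpositive row sums $tc_k-(1-t)\le0$, so \eqref{EL2.1A} holds for $\bcA_t$ uniformly in $t$; together with standard interior $\Sobl^{2,d}$ estimates and boundary barrier arguments this provides a uniform a priori bound (in $\Sobl^{2,d}(D_i)\cap\cC(\overline D)$) for solutions of the corresponding Dirichlet problem. At $t=0$ the system decouples into the scalar problems $\Delta u_i-u_i=f_i$ in $D_i$, $u_i=g_i$ on $\partial D_i$, which are uniquely solvable by classical theory since $\partial D_i$ is smooth. Passing through the usual approximation of continuous boundary data by smooth ones, the method of continuity then propagates solvability to $t=1$, and uniqueness is as above.

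The main obstacle is the a priori bound \eqref{EL2.1A}: one must verify that Sirakov's contact-set/ABP argument still closes when the components are posed on different subdomains $D_i$. The displayed computation is what settles this --- at every contact point where $v$ exceeds $G$ the maximizing index $k$ satisfies $x\in D_k$, so the equation is available precisely where the estimate consumes it, and the cooperative structure together with $c_k\le0$ annihilates the zeroth-order coupling there, reducing the matter to the scalar ABP estimate. Everything else --- uniqueness via \eqref{EL2.1A}, existence via the continuity method --- is then routine.
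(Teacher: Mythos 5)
Your argument is correct, but both halves take a genuinely different route from the paper. For the a priori bound \eqref{EL2.1A}, the paper does \emph{not} run the contact-set/ABP argument on $\max_i u_i$: it isolates the extremal component $j$, uses the cooperative structure to replace the coupling by the crude bound $-\sum_{k\ne j}m_{jk}u_k\ge Mc_{jj}$ (with $c_{jj}=c_j+m_{jj}$), and then compares $u_j$ with the scalar barrier $h=w+Mv$, where $v$ solves $L_jv+c_{jj}v=c_{jj}$ and Sirakov's Lemma~2.1 supplies the quantitative gap $v\le 1-\delta$ that lets one absorb $M(1-\delta)$; this forces a separate (decoupled, scalar ABP) treatment of the degenerate case $c_{jj}\equiv0$. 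Your direct approach --- observing that a supporting plane of $v=\max_iu_i$ at a point with $v>G$ also supports the maximizing component $u_k$ there, that such a point necessarily lies in $D_k$ so the equation is available, and that cooperativity plus $c_k\le0$ kills the zeroth-order terms at that point --- avoids the case distinction and the appeal to Sirakov's Lemma~2.1 entirely, and yields a constant independent of $\bm M$; the price is the (standard but nontrivial) measure-theoretic bookkeeping for the normal map of the concave envelope of a maximum of $\Sob^{2,d}$ functions, which you correctly identify as the crux and which does close since each $\Gamma_k^+\subset D_k$. For existence, the paper decouples the system and runs a Schauder fixed-point argument on the map $\bm v\mapsto T\bm v$ defined by solving the scalar problems with the coupling terms frozen, using \cite[Theorem~9.15, Corollary~9.29]{GilTru}; your method of continuity from $\diag(\Delta-1)$ is an equally valid alternative, though you should note that it requires upgrading \eqref{EL2.1A} to a full $\Sob^{2,d}$ a priori bound via the global estimates on the smooth $D_i$ after the reduction to zero boundary data (the paper's fixed-point route only needs compactness of $T$, which comes for free from interior H\"older estimates). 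Both of your choices are sound; neither introduces a gap.
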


\begin{proof}
To establish \eqref{EL2.1A} we follow the idea of \cite{Sirakov}. Let $j$ be such that
$\max_{i}\, \sup_{D_i} u^+_i = \sup_{D_j} u^+_j$. Replacing $u_i$ by 
$u_i-G$ we may assume that $G=0$.
Let $c_{jj} \df c_j + m_{jj}$. Since the 
equations are cooperative we have $c_{jj}\le -\sum_{k\neq j} m_{jk}\le 0$ in $D_j$.
Therefore, if $c_{jj}=0$ in $D_j$, then $\sum_{k\neq j} m_{jk}=0$ in $D_j$ which in turn,
makes the $j$ equation a scalar equation. Then we can apply the standard ABP estimate
to obtain \eqref{EL2.1A}. Thus, we assume that $c_{jj}\lneq 0$ in $D_j$.
Recall from \cref{E-Lk} that
$$L_j g \,=\, \trace\bigl(a_j(x)\grad^2 g(x)\bigr)
+ b_j(x)\cdot \grad g(x).$$
Let $v, w\in \Sobl^{2, d}(D_j)\cap \cC(\overline{D}_j)$ be such that
$$ L_j w \,=\, - f^+_j \quad \text{in\ } D_j, \quad w=0\quad \text{on\ } \partial D_j\,,$$
and
$$L_j v + c_{jj} v \,=\, c_{jj}  \quad \text{in\ } D_j, \quad v = 0
\quad \text{on\ } \partial D_j\,.$$
Applying \cite[Lemma~2.1]{Sirakov} we find $\delta>0$, dependent on $D_j$ and
the coefficients of $\bLg$, satisfying
$$0 \,\le\, v \,\le\, 1-\delta \quad \text{in\ } D_j\,.$$
Let $M=\max_i \sup_{D} u^+_i=\max_i \sup_{D_i} u^+_i$ (otherwise, there is nothing to prove).
We observe that 
\begin{align*}
L_j u_j + c_{jj} u_j &\,\ge\, - f^+_j - \sum_{k\neq j} m_{jk} u_k
\\
&\,\ge\, - f^+_j - \sum_{k\neq j} m_{jk} u^+_k
\\
&\,\ge\, - f^+_j + M c_{jj} \quad \text{in\ } D_j\,.
\end{align*}
Again, for $h=w + M v$, we have $L_j h + c_{jj} h \le -f^+_j + M c_{jj}$ in $D_j$.
Thus, by the strong maximum principle, we get $u_j\le h$ in $D_j$ giving us
$$M \,=\, \sup_{D_j}\, u_j\,\le\, \sup_{D_j}\, h \,\le\, 
C\norm{f^+_j}_{L^d(D_j)} + M (1-\delta)\,,$$
where we used the ABP estimate for $w$. This gives us \eqref{EL2.1A}.

Now that we have established the maximum principle, existence of a unique solution follows 
from a fixed point theorem. In particular, it is enough to prove existence
of a solution assuming $g_i\in \cC^2(\overline{D})$ for all $i$. For continuous $g$ we can
use a standard approximation argument. Now, replacing $u_i$ by $u_i-g_i$ we may assume that 
$g_i=0$ for all $i\in\cS$. Consider the set
$$\sH\,\df\,
\bigl\{\bm{v}\in\cC(\bar{D}\times\cS)\,\colon v_i=0 \text{\ in\ } \bar{D}\setminus D_i\,,
\text{\ for\ } i\in\cS\bigr\}\,.$$
For $\bm{v}\in\sH$ we define the map $\bm{w}=T\bm{v}\in \sH$ as follows:
$w_i\in\Sob^{2, d}(D_i)\cap\Sob^{1,d}_0(D_i)$ solves
$$ L_i w_i + c_{ii} w_i = -f_i - \sum_{k\neq i} m_{ki} v_k \quad \text{in\ } D_i\,,
\quad\text{and}
\quad w_i=0\quad \text{on\ } \partial D_i.$$
In view of \cite[Theorem~9.15]{GilTru}, $T$ is well defined. Further more $T$ is 
linear, continuous and compact. Now setting $\sK\subset \sH$ as the collection of
$\beta$-H\"{o}lder continuous functions for some small $\beta$, we note from
\cite[Corollary~9.29]{GilTru} that $T:\sK\to\sK$. Since $\sK$ is a compact, convex
subset of $\sH$, using Schauder fixed point theorem we get a fixed point $\bm{u}$ of $T$. 
It is easily seen that $\bm{u}$ is the solution of \eqref{EL2.1B}.
\end{proof}

We are now ready to prove \cref{T1.1}

\begin{proof}[{\bf Proof of Theorem~\ref{T1.1}}]
First we consider (a). Suppose, on the contrary, that there exists a bounded, positive
$\bm{u}\in\Sobl^{2, d}(\Rd\times\cS)$ solving
$\bLg \bm{u} = \bm{C}\, \bm{u}$ for some constant vector $\bm{C}>0$. Since $\bLg \bm{u}\ge 
0$ in $\Rd\times\cS$. 
From the proof of \cref{P2.1} it follows that $\bm{u}$ attains its maximum in $\Rd\times\cS$,
say in the component $u_i$, and $u_i$ is constant in $\Rd$. Then
$$0 \,<\, C_i u_i \,=\, (\bLg \bm{u})_i \,=\, \sum_{k\neq i} m_{ik} (u_j-u_i) \,\le\, 0\,,$$
which is a contradiction.
This proves (a).

Next we consider (b). Fix a ball $B\subset\Rd$ and $\cS_1\subset \cS$. Given 
a continuous function $\bm{g}: \Bar{B}\times \cS_1\to \RR$, we first prove
existence of solution to 
\begin{equation}\label{ET1.1A}
\bLg \bm{u} \,=\, 0 \quad \text{in\ } (\Bar{B}\times\cS_1)^c, 
\quad \bm{u}\,=\, \bm{g} \quad \text{on\ } \Bar{B}\times\cS_1.
\end{equation}
Applying \cref{L2.1}, we consider a sequence of solutions satisfying
\begin{equation}\label{ET1.1B}
\begin{split}
\bLg \bm{u}_n &\,=\,0 \quad \text{in\ } (B_n\times\cS)\setminus (B\times \cS_1),
\\
\bm{u}_n &\,=\,\bm{g} \quad \text{on\ } \bar{B}\times\cS_1\,,
\\
\bm{u}_n &\,=\,0 \quad \text{on\ } \partial{B_n}\times\cS\,.
\end{split}
\end{equation}
One more application of \cref{L2.1} gives 
$$-\max_i\,\max_{\Bar B}\,|g_i| \,\le\, u_n(x,i) \,\le\, \max_i\,\max_{\Bar B}\,|g_i|\,,
\quad (x,i)\in (B_n\times\cS)\setminus (B\times \cS_1).$$
Thus, using standard elliptic estimates we can pass to the limit in \eqref{ET1.1B}
to find a solution $\bm{u}$ of \eqref{ET1.1A}.

Next we divide the proof of uniqueness in three steps.

\noindent{\bf Step 1.} Let $K\Subset B$ and $\cS_1=\cS$ where
$K$ is from \eqref{ED1.2A}. 
We claim that for a solution $\bm{u}$ of \eqref{ET1.1A} we have
\begin{equation}\label{ET1.1C}
\sup_j\,\sup_{B^c}\, u_j\,=\, \max_{j}\, \max_{\partial{B}}\, u_j\,.
\end{equation}
Replacing $u_i$ by $u_i- \max_{j}\, \max_{\partial{B}} u_j$ we may assume that the
rhs of \eqref{ET1.1C} is $0$.
Suppose, on the contrary, that the claim \eqref{ET1.1C} is not true. 
Then we must have 
$$0< \max_{j}\,\sup_{B^c}\, u_j\,<\, \infty\,.$$
Since $\bm\Lyap\ge 1$ is bounded below, without any loss of generality we may assume 
$\bm\Lyap>\bm{u}$. Otherwise, multiply $\bm{u}$ with a suitable positive constant.
Now we choose $n$ large enough
so that $(\bm{u}-\frac{1}{n}\bm\Lyap)(x_0, j_0)>0$ for some $(x_0, j_0)\in B^c\times\cS$. 
Again, we choose $\kappa>0$ small enough so that
for $\bm\phi=(\bm{u}-\frac{1}{n}\bm\Lyap)$ we have
$$\bLg\bm\phi\,=\,-\frac{1}{n}\bLg\bm\Lyap \,\ge\, \frac{\kappa_1}{n}\bm\Lyap
>\kappa \bm\phi \quad \text{in\ } B^c\times\cS\,,$$
where the inequality follows from \eqref{ED1.2A}.
Let $\bm{w}_n$ be the solution of 
\begin{equation}\label{ET1.1D}
\bLg \bm{w}_n=\kappa \bm{w}_n\quad \text{in\ } B_n\times\cS, \quad \text{and}\quad  
\bm{w}_n=\norm{\bm\phi^+}_{L^\infty}\quad 
\text{on\ } \partial B_n\times\cS\,.
\end{equation}
Using the scalar maximum principle 
we have $\bm{w}_n>0$ in $B_n\times\cS$. Indeed, 
if $\min_{j}\min_{\Bar B_n} (\bm{w}_n)_j = (\bm{w}_n)_i(z)\le 0$, then using \eqref{E-Lk} we write
\begin{align*}
L_i \bigl((\bm{w}_n)_i-(\bm{w}_n)_i(z)\bigr) &+ (m_{ii}-\kappa)
\bigl((\bm{w}_n)_i-(\bm{w}_n)_i(z)\bigr)\\
&\,\le\, \Bigl(\bLg \bigl(\bm{w}_n-(\bm{w}_n)_i(z)\bigr)\Bigr)_i
-\kappa \bigl((\bm{w}_n)_i-(\bm{w}_n)_i(z)\bigr)
\,\le\, 0\,,
\end{align*}
and therefore, by the strong maximum principle, we must have $(\bm{w}_n)_i=(\bm{w}_n)_i(z)$ in $B_n$ which is not possible since $(\bm{w}_n)_i>0$ on $\partial B_n$.
Since  $\bLg\bm\phi -\kappa \bm\phi\ge 0$ in $(B_n\cap B^c)\times\cS$ and $\bm\phi\le 0$ in $\partial B\times\cS$, 
using \cref{L2.1} it follows that $\bm{w}_n\ge \bm\phi$ in $(B_n\cap B^c)\times\cS$.
Furthermore, since $\bm{w}_n$ attains its maximum at the boundary,
we have $\sup_j\sup_{B_n}(\bm{w}_n)_j\le \norm{\bm{\phi}^+}$. 
Thus, passing to the limit in \eqref{ET1.1D}, we find a solution $\bm{v}$ of 
$\bLg\bm{v}=\kappa \bm{v}$ which is bounded and non-negative.
Again, $\bm{v}(x_0, j_0)\ge \bm\phi(x_0, j_0)$ implies that 
$\bm{v}$ is positive, due to the maximum principle. This of course, contradicts regularity
of $\bLg$.
Hence, we must have 
$$\sup_j\,\sup_{B^c}\, u_j \,\le\, 0\,,$$
which establishes the claim \eqref{ET1.1C}. Now using \eqref{ET1.1C} we can easily obtain
uniqueness of solution \eqref{ET1.1A} when $B\Supset K$ and $\cS_1=\cS$.

\noindent{\bf Step 2.} We show that if the exterior problem \eqref{ET1.1A}
with respect to a set $B'\times\cS'$  has a unique bounded
solution with boundary data $0$, then the same is the case for the 
exterior problem with respect to any domain 
$B^{\prime\prime}\times\cS^{\prime\prime}$ where $B'\subset B^{\prime\prime}$
and $\cS'\subset\cS^{\prime\prime}\subset\cS$. 
Suppose, on the contrary, that there exists 
$B^{\prime\prime}\times\cS^{\prime\prime}$ such that \eqref{ET1.1A} has a non-zero
bounded solution 
$\bm{v}$ with 
boundary data $0$ given in $\Bar{B}^{\prime\prime}\times\cS^{\prime\prime}$. With no loss 
of generality, we may 
assume that $\bm{v}^+\gneq 0$.
Now consider the sequence of solutions $\bm{v}_n$ of
\begin{equation}\label{ET1.1E}
\begin{split}
\bLg \bm{v}_n &\,=\,0 \quad \text{in\ }  (B_n\times\cS)\setminus (B'\times\cS')\,,
\\
\bm{v}_n &\,=\,0\quad \text{on\ } \Bar{B}'\times\cS',
\quad \bm{v}_n\,=\,\norm{\bm{v}^+}_{L^\infty}\quad \text{on\ } \partial B_n\times\cS\,. 
\end{split}
\end{equation}
Furthermore, $0\le \bm{v}_n\le \norm{\bm{v}^+}_{L^\infty}$, 
by an argument similar to \eqref{ET1.1D}.
Again, by the comparison
principle in \cref{L2.1}, we get $\bm{v}^+\le \bm{v}_n\le \norm{\bm{v}^+}_{L^\infty}$
in $(B^{\prime\prime}\times\cS^{\prime\prime})^c$.
Therefore, using standard elliptic pde estimates we can pass to
the limit in \eqref{ET1.1E}, as $n\to\infty$, to obtain a solution $\bm{u}$ satisfying
$$\bLg\bm{u} \,=\, 0 \quad \text{in\ }  (B'\times\cS')^c,
\quad \bm{u} = 0\quad \text{on\ } \Bar{B}'\times\cS',$$
and $\bm{u}\ge \bm{v}^+$. But this contradicts the uniqueness hypothesis with respect to 
the domain $B'\times\cS'$.

\noindent{\bf Step 3.} In view of Step 2, it is enough to prove uniqueness of 
\eqref{ET1.1A} with respect to domains of the form $B\times\{i\}$. Again, we may choose
$|B|$ small enough so that $\bm{M}$ is irreducible (see \eqref{irred}) in 
$\Rd\setminus B$. Now consider a solution $\bm{u}$ of the problem
$$\bLg\bm{u} \,=\, 0 \quad \text{in\ } (B\times\{i\})^c,
\quad \bm{u} = 0\quad \text{on\ } \Bar{B}\times\{i\}\,.$$
We have to show that $\bm{u}=0$. Choose $\widehat{B}\Supset K\cap B$. Then, from 
\eqref{ET1.1C}, we get 
\begin{equation*}
\sup_j\,\sup_{\widehat{B}^c}\, u_j\,=\, \max_{j}\, \max_{\partial{\widehat B}}\, u_j\,.
\end{equation*}
This of course, implies that $\bm{u}$ attends its maximum in $\widehat{B}\times\cS$.
Using irreducibility, it is now easy to show that $\bm{u}\le 0$ (see the argument in
\cref{P2.1}). Likewise, we can also show that $\bm{u}\ge 0$.

Part (c) can be treated as a special case, using the arguments in part (b).
This completes the proof.
\end{proof}

Let us also include the following useful characterization of recurrence.
A similar result can be found in \cite[Theorem~3.12]{YZ10}
in a more restrictive setting.

\begin{proposition}\label{P2.2}
Suppose that for some ball $B_r(x_0)$
and $\cS_1\subset\cS$ the exterior
Dirichlet problem 
$$\bLg \bm{u} \,=\, 0\quad  \text{in\ } 
\bigl(\Bar{B}_r(x_0)\times\cS_1\bigr)^c\,,$$
with given continuous
boundary values on $\Bar{B}_r(x_0)\times\cS_1$ has a unique
bounded solution.
Then $\bLg$ is recurrent.
\end{proposition}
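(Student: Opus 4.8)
The statement is the uniqueness half of \cref{T1.1}\,(b) with the hypothesis in place of exponential stability, and the plan is to follow that argument, noting that the one ingredient supplied there by the Lyapunov function can be supplied instead by the given recurrence. First, recall that by the approximation construction in the proof of \cref{T1.1}\,(b) — solving the exterior problem on expanding balls $B_n$ and passing to the limit via \cref{L2.1} and standard elliptic estimates — the exterior Dirichlet problem for any bounded $D\times\cS_1$ always has a bounded solution; hence, by linearity, $D\times\cS_1$ is recurrent exactly when the only bounded solution of $\bLg\bm u=0$ in $(\bar D\times\cS_1)^c$ with $\bm u=0$ on $\bar D\times\cS_1$ is $\bm u\equiv0$. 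Moreover, Step~2 of the proof of \cref{T1.1} (which uses only the comparison principle of \cref{L2.1}) shows that recurrence of $B'\times\cS'$ propagates to recurrence of $D\times\cS''$ whenever $\overline{B'}\subset D$ and $\cS'\subset\cS''\subset\cS$; applied to the hypothesis, this yields that $\hat B\times\cS$ is recurrent for every ball $\hat B\supset\overline{B_r(x_0)}$.

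The new point is that this forces the supremum norm of a bounded exterior solution to be attained. Fix a bounded $D$ and nonempty $\cS_2\subset\cS$, let $\bm u$ be a bounded solution of $\bLg\bm u=0$ in $(\bar D\times\cS_2)^c$ with $\bm u=0$ on $\bar D\times\cS_2$, and choose a ball $\hat B\supset\overline{B_r(x_0)}\cup\bar D$, so that $\bLg\bm u=0$ in $(\bar{\hat B})^{c}\times\cS$. Solving the annular problems $\bLg\bm u_n=0$ in $(B_n\setminus\bar{\hat B})\times\cS$ with $\bm u_n=\bm u$ on $\bar{\hat B}\times\cS$ and $\bm u_n=0$ on $\partial B_n\times\cS$, \cref{L2.1} gives $\norm{\bm u_n}_{\infty}\le\norm{\bm u}_{L^\infty(\bar{\hat B}\times\cS)}$, and letting $n\to\infty$ produces a bounded solution of the exterior problem for $\hat B\times\cS$ with data $\bm u|_{\bar{\hat B}\times\cS}$, which by the recurrence of $\hat B\times\cS$ must agree with $\bm u$ on $(\bar{\hat B})^{c}\times\cS$. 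Therefore $\norm{\bm u}_{L^\infty(\Rd\times\cS)}=\norm{\bm u}_{L^\infty(\bar{\hat B}\times\cS)}=:M'$, and $M'$ is attained on the compact set $\bar{\hat B}\times\cS$.

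To conclude, suppose $M'>0$; replacing $\bm u$ by $-\bm u$ if necessary, $M'=\max_{\Rd\times\cS}u_j$ is attained at some $(x^*,k^*)$, which cannot lie in $\bar D\times\cS_2$ since $\bm u$ vanishes there. If $k^*\in\cS_2$ then $x^*\notin\bar D$, and on $(\Rd\setminus\bar D)\times\{k^*\}$ we have $(\bLg\bm u)_{k^*}=0$; using $u_l\le M'$, $m_{k^*l}\ge0$ for $l\ne k^*$ and $\sum_l m_{k^*l}=0$ this gives $L_{k^*}(u_{k^*}-M')+m_{k^*k^*}(u_{k^*}-M')\ge0$ with $m_{k^*k^*}\le0$, so the scalar strong maximum principle forces $u_{k^*}\equiv M'$ on the connected component of $\Rd\setminus\bar D$ containing $x^*$, which contradicts $u_{k^*}=0$ on $\partial D$. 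Hence $k^*\notin\cS_2$; then $(\bLg\bm u)_{k^*}=0$ on $\Rd$, the same computation and strong maximum principle give $u_{k^*}\equiv M'$ on $\Rd$, and then $0=(\bLg\bm u)_{k^*}=\sum_{l\ne k^*}m_{k^*l}(u_l-M')$ is a sum of nonpositive terms, so $m_{k^*l}(x)\,(u_l(x)-M')=0$ for all $x$ and $l\ne k^*$. Exactly as at the end of the proof of \cref{P2.1}, the irreducibility of $\bm M$ on $\Rd$ then propagates $u_l\equiv M'$ along a strictly growing subset of $\cS$; since, by irreducibility, this subset must eventually reach into $\cS_2$, one obtains $u_l=M'$ on a set of positive measure for some $l\in\cS_2$, which is impossible by the same dichotomy used above for $k^*\in\cS_2$ (according to whether that set meets $\bar D$). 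Thus $M'=0$, i.e.\ $\bm u\equiv0$, so $D\times\cS_2$ is recurrent; as $D$ and $\cS_2$ were arbitrary, $\bLg$ is recurrent.

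I expect the only real obstacle to be the observation in the second paragraph: that the recurrence of a single ball, through the enlargement step, delivers the recurrence of all large $\hat B\times\cS$ and hence the attainment of $M'$ — this is precisely what the Lyapunov function provided in \cref{T1.1}\,(b), and without a maximizing point there is nothing on which to run the strong maximum principle. The enlargement step and the irreducibility propagation are the arguments already carried out in the proofs of \cref{T1.1} and \cref{P2.1}, so I expect them to carry over with only cosmetic changes.
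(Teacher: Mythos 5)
Your proof is correct and takes essentially the same route as the paper's: existence via the exhaustion of $\Rd$, propagation of uniqueness through Step~2 of the proof of \cref{T1.1} to sets of the form $\hat B\times\cS$, deduction from that uniqueness that the supremum of a bounded exterior solution is attained on a compact set (the paper's \eqref{EP2.2B}), and then the strong-maximum-principle/irreducibility propagation of Step~3 and \cref{P2.1}. The paper compresses the last two steps into references to its earlier arguments, whereas you spell them out, but the substance is identical.
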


\begin{proof}
Without any loss of generality, we may assume that $B_r(x_0)\times\cS_1=B_1(0)\times\cS_1$.
For some ball $B$ consider the set $B\times\cS_2$. As shown in the 
proof of \cref{T1.1}, given a function
$\bm{g}\colon \Bar{B}\times \cS_2\to \RR$, there exists
a bounded solution to 
\begin{equation}\label{EP2.2A}
\bLg \bm{u} \,=\, 0 \quad \text{in\ } (\Bar{B}\times\cS_2)^c, 
\quad \bm{u}\,=\, \bm{g} \quad \text{on\ } \Bar{B}\times\cS_2.
\end{equation}
Thus we only need to establish the uniqueness of \eqref{EP2.2A},
in other words, that $\bm{g}=0$ implies
$\bm{u}=0$. 

First, consider the case when $B_1(0)\times\cS_1\subset B\times\cS_2$. Suppose, on the contrary, that $\bm{u}\neq 0$. Then repeating the argument of step 2 in \cref{T1.1} we can construct a solution $\bm{v}$ to
$$\bLg\bm{v} \,=\, 0 \quad \text{in\ } (B_1(0)\times\cS_2)^c,
\quad \bm{u} \,=\, 0\quad \text{on\ } \Bar{B}_1(0)\times\cS_2,$$
and $\bm{v}\ge \bm{u}^+$. This clearly, contradicts the hypothesis
of the proposition. Thus 
$\bm{u}= 0$.

Next, we examine the case where 
$B\times\cS_2\subset B_1(0)\times\cS_1$.
We claim that for any solution $\bm{v}$ of 
$$\bLg\bm{v} \,=\, 0 \quad \text{in\ } (B\times\cS)^c,$$
we have
\begin{equation}\label{EP2.2B}
\max_j\, \max_{\Bar B}\, v_j\,=\, \max_j \sup_{B^c}\, v_j\,.
\end{equation}
\eqref{EP2.2B} follows from the uniqueness of solution, comparison principle in bounded domains
and approximation of $v$  by a sequence of solution as done in \eqref{ET1.1B}.
Now using \eqref{EP2.2B} we can complete the proof of uniqueness repeating an argument similar to
step 3 of \cref{T1.1}.
\end{proof}

The following observation is used in several places.

\begin{lemma}\label{L2.2}
Suppose that $(\bPsi, \bm c, \lambda)$ and
$(\widehat\bPsi, \widehat{\bm c}, \widehat\lambda)$ be two tuples satisfying
\begin{equation*}
\bLg\bPsi + \bm{c}\bPsi \,=\, -\lambda \bPsi\,,\qquad\text{and\ \ }
\bLg\widehat\bPsi + \widehat{\bm{c}}\widehat\bPsi \,=\, -\widehat\lambda \widehat\bPsi
\end{equation*}
on $\Rd$.
Define ${\Phi}_k(x)\df\frac{\widehat\Psi_k}{\Psi_k}(x)$,
and $\bPhi \df (\Phi_1,\dotsc,\Phi_N)$.
Then, the following identity holds:
\begin{equation*}
\widetilde{\bLg}^{\bm\psi} \boldsymbol\Phi
+ \bigl(\widehat{\bm{c}} - \bm{c} - (\lambda-\widehat\lambda)\bigr)\bPhi\,=\,0\,.
\end{equation*}
Moreover, for $\widehat\Psi_k=\Phi_k\Psi_k$ we have
\begin{equation*}
(\bLg\widehat\bPsi)_k \,=\, \Phi_k (\bLg\bPsi)_k + \Psi_k(\widetilde\bLg^{\bm\psi}\bPhi)_k
\quad\forall\,k\in\cS\,.
\end{equation*}
\end{lemma}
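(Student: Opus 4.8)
The plan is to verify the identity by direct computation, expanding the twisted operator $\widetilde{\bLg}^{\bm\psi}$ applied to $\bPhi$ using the explicit form in \cref{E-tbLg}, and reconciling terms with the two eigenvalue equations for $\bPsi$ and $\widehat\bPsi$. The two assertions of the lemma are really the same identity viewed two ways: the second display is the ``product rule'' underlying the first, so I would prove the second display first and then derive the first by substituting the eigenvalue equations.

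\textbf{Step 1: The diffusion part.}\ Fix $k\in\cS$ and write $\widehat\Psi_k = \Phi_k\Psi_k$. For the second-order operator $L_k$ from \cref{E-Lk}, a direct Leibniz expansion gives
\begin{equation*}
L_k(\Phi_k\Psi_k) \,=\, \Phi_k L_k\Psi_k + \Psi_k L_k\Phi_k
 + 2\,\trace\bigl(a_k\,\grad\Phi_k\otimes\grad\Psi_k\bigr)\,.
\end{equation*}
Now $\grad\Psi_k = \Psi_k\grad\psi_k$ since $\psi_k = \log\Psi_k$, so the cross term equals $2\Psi_k\,(a_k\grad\psi_k)\cdot\grad\Phi_k$. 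Adding $\Psi_k\,b_k\cdot\grad\Phi_k$ from $\Phi_k L_k\Psi_k$ being incomplete is not needed; rather, grouping $\Psi_k L_k\Phi_k$ with the cross term yields exactly $\Psi_k\bigl(\trace(a_k\grad^2\Phi_k) + (b_k + 2a_k\grad\psi_k)\cdot\grad\Phi_k\bigr) = \Psi_k\,L_k^{\bm\psi}\Phi_k$, modulo carrying the $b_k\cdot\grad$ term correctly, which I would double-check by writing $L_k(\Phi_k\Psi_k)$ out in full. So the diffusion part contributes $\Phi_k L_k\Psi_k + \Psi_k L_k^{\bm\psi}\Phi_k$.

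\textbf{Step 2: The coupling part.}\ The coupling term in $(\bLg\widehat\bPsi)_k$ is $\sum_j m_{kj}\widehat\Psi_j = \sum_j m_{kj}\Phi_j\Psi_j$. By definition of $\widetilde m_{kj} = m_{kj}\Psi_j/\Psi_k$ for $j\ne k$ and $\widetilde m_{kk} = -\sum_{j\ne k}\widetilde m_{kj}$, I would split off the diagonal: $\sum_j m_{kj}\Phi_j\Psi_j = m_{kk}\Phi_k\Psi_k + \sum_{j\ne k}m_{kj}\Psi_j\Phi_j = m_{kk}\Phi_k\Psi_k + \Psi_k\sum_{j\ne k}\widetilde m_{kj}\Phi_j$. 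Writing $\sum_{j\ne k}\widetilde m_{kj}\Phi_j = (\widetilde{\bm M}\bPhi)_k - \widetilde m_{kk}\Phi_k = (\widetilde{\bm M}\bPhi)_k + \bigl(\sum_{j\ne k}\widetilde m_{kj}\bigr)\Phi_k$, and noting $\Psi_k\sum_{j\ne k}\widetilde m_{kj} = \sum_{j\ne k}m_{kj}\Psi_j$, one gets $\sum_j m_{kj}\widehat\Psi_j = \Phi_k\bigl(m_{kk}\Psi_k + \sum_{j\ne k}m_{kj}\Psi_j\bigr) + \Psi_k(\widetilde{\bm M}\bPhi)_k = \Phi_k(\bm M\bPsi)_k + \Psi_k(\widetilde{\bm M}\bPhi)_k$. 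Combining with Step 1 gives $(\bLg\widehat\bPsi)_k = \Phi_k(\bLg\bPsi)_k + \Psi_k(\widetilde\bLg^{\bm\psi}\bPhi)_k$, which is the second display.

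\textbf{Step 3: Deriving the first identity.}\ Now substitute $(\bLg\bPsi)_k = -(\bm c + \lambda)_k\Psi_k = -(c_k+\lambda)\Psi_k$ and $(\bLg\widehat\bPsi)_k = -(\widehat c_k + \widehat\lambda)\widehat\Psi_k = -(\widehat c_k+\widehat\lambda)\Phi_k\Psi_k$ into the second display. This gives $-(\widehat c_k+\widehat\lambda)\Phi_k\Psi_k = -\Phi_k(c_k+\lambda)\Psi_k + \Psi_k(\widetilde\bLg^{\bm\psi}\bPhi)_k$; dividing by $\Psi_k>0$ and rearranging yields $(\widetilde\bLg^{\bm\psi}\bPhi)_k = \bigl(c_k+\lambda - \widehat c_k - \widehat\lambda\bigr)\Phi_k = -\bigl(\widehat c_k - c_k - (\lambda - \widehat\lambda)\bigr)\Phi_k$, which is the first display componentwise. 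Since the argument holds for every $k\in\cS$, the vector identity follows.

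The computation is entirely routine; the only point requiring care is the bookkeeping in Step 2, making sure the diagonal entries of $\bm M$ and $\widetilde{\bm M}$ are handled consistently with their sign conventions, and in Step 1 keeping track of the first-order term so that exactly the drift $b_k + 2a_k\grad\psi_k$ of $L_k^{\bm\psi}$ emerges. I expect no genuine obstacle here; this is a lemma recording an algebraic identity for later use.
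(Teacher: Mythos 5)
Your proposal is correct and follows essentially the same route as the paper: a direct Leibniz expansion establishing the product identity $(\bLg\widehat\bPsi)_k = \Phi_k (\bLg\bPsi)_k + \Psi_k(\widetilde\bLg^{\bm\psi}\bPhi)_k$ first, with the first display then obtained by substituting the two eigenvalue equations and dividing by $\Psi_k>0$. The bookkeeping in your Steps 1 and 2 checks out against the definitions of $L_k^{\bm\psi}$ and $\widetilde{\bm M}$, so there is nothing to add.
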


\begin{proof}
Note that the first identity follows from the second one,
which can be shown as follows:
\begin{align*}
(\bLg\widehat\bPsi)_k
&\,=\, \trace\bigl(a_k(x)\grad^2 \widehat\Psi_{k}(x)\bigr)
+ b_k(x)\cdot \grad \widehat\Psi_{k}(x) + \sum_{j\neq k} 
m_{kj}(x)(\widehat\Psi_{j}(x)-\widehat\Psi_k(x))
\\
&\,=\, \Phi_k(x) \Bigl[\trace\bigl(a_k(x)\grad^2 \Psi_{k}(x)\bigr)
+ b_k(x)\cdot \grad \Psi_{k}(x) + \sum_{j\neq k} m_{kj}(x)(\Psi_{j}(x)-\Psi_k(x)) \Bigr]
\\
&\mspace{50mu} + \Psi_k(x) \Bigl[\trace\bigl(a_k(x)\grad^2 \Phi_{k}(x)\bigr)
+ (b_k(x) + 2 a_k(x)\grad\psi_k(x))\cdot\grad \Phi_{k}(x) \Bigr]
\\
&\mspace{100mu} + \sum_{j\neq k} m_{kj}(x)(\Phi_{j}(x)-\Phi_k(x))\Psi_j(x)
\\
&\,=\, \Phi_k(x) (\bLg\bPsi)_k + \Psi_k(x) L^{\bm\psi}_k \Phi_k + \Psi_k(x)
\sum_{j\neq k} \widetilde{m}_{kj}(x)(\Phi_{j}(x)-\Phi_k(x))
\\
&\,=\, \Phi_k (\bLg\bPsi)_k + \Psi_k(\widetilde\bLg^{\bm\psi}\bPhi)_k\,.
\end{align*}
This completes the proof.
\end{proof}

Next we prove \cref{T1.3}.

\begin{proof}[{\bf Proof of Theorem~\ref{T1.3}}]
Let $(\bPsi_D, \lambda_D)$ be the Dirichlet principal eigenpair solving \eqref{E1.12}. 
Consider a 
closed ball $\sB\Subset D$ and let $K=\sB\times \cS$. Applying \cref{TA.2} it follows that
$\lambda_D(\bm{c}-\Ind_K)>\lambda_D(\bm{c})=\lambda_D$.
Again, using \cref{TA.3,TA.4} we can find
a smooth domain $D_1$ which contains $\Bar{D}$ and 
$\lambda_1\df\lambda_{D_1}(\bm{c}-\Ind_K)>\lambda_D$.

Let $\bPsi_{D_1}$ be the Dirichlet principal eigenfunction corresponding to the eigenvalue
$\lambda_{D_1}(\bm{c}-\Ind_K)$ in the domain $D_1$. That is,
$\bPsi_{D_1}\in \cC_{0}(\overline{D}_1\times\cS)
\cap\Sobl^{2,p}(D_1\times\cS),\ p>d$, and
\begin{equation}\label{ET1.3B}
\begin{split}
\bLg \bPsi_{D_1} + (\bm{c}-\Ind_{K})\bPsi_{D_1} &\,=\, -\lambda_1\, \bPsi_{D_1} \quad 
\mbox{in\ }
D_1\times\cS\,,\\
\bPsi_{D_1} &\,=\, 0\quad \text{on\ } \partial{D_1}\times\cS\,,\\
\bPsi_{D_1} &\, > \, 0 \quad \text{in\ } D_1\times\cS\,.
\end{split}
\end{equation}
Now define
$$(\bm\Lyap_D)_k \,\df\,
\frac{(\bPsi_{D_1})_k}{(\bPsi_D)_k}\quad \text{for\ } k\in\cS.$$
Since $\bPsi_D=0$ on $\partial{D}\times\cS$, it is evident that $\bm\Lyap_D$ is inf-compact 
and
$\bm\Lyap_D\in \Sobl^{2,p}(D\times\cS)$ for any $p>d$. Using \cref{L2.2} and \eqref{ET1.3B} 
we then obtain
\begin{equation}\label{ET1.2C}
\widetilde\bLg_D\bm\Lyap_D \,=\, (\lambda_D-\lambda_1+\Ind_{K})\bm\Lyap_D
\,\le\, -\delta_1 \bm\Lyap_D + \delta_2 \Ind_{K}\,,
\end{equation}
where $\delta_1=\lambda_1-\lambda_D$ and $\delta_2=\max_k\max_{\sB}(\bm\Lyap_D)_k$. This proves 
\eqref{ET1.3A}.

Now we show that the twisted operator $\widetilde{\bLg}_D$ is regular. Suppose,
on the contrary, that it is not. Then we can find a positive vector
$\bm{C}$ and a bounded, positive solution $\bm{u}$ of
$$\widetilde{\bLg}_D\bm{u}\,=\, \bm{C}\bm{u}\quad \text{in\ } D\times\cS.$$
Define $\Phi_k\df(\bPsi_D)_k u_k$. Using \cref{L2.2}, we then have
$$\bLg\bPhi \,=\, \bm{u}\bLg\bPsi_D + \bPsi_D\widetilde\bLg_D\bm{u}
\,=\, -\lambda_D\bPhi + \bm{C} \bPhi\,\ge\, (-\lambda_D + C_0) \bPhi$$
in $D\times\cS$, where $C_0=\min\{C_1, \ldots, C_N\}>0$. But this is not possible due to
\eqref{ELA.3A}. Hence $\widetilde{\bLg}_D$ is regular, completing the proof.
\end{proof}

Next we consider the eigenvalue problem for $\bcA$ in $\Rd$.
Using \eqref{irred}, we can find a ball $B_{n_0}$ such that the matrix
$\bm{M}$ is irreducible in $B_{n_0}$. Then, by \cref{TA.1}, there exists
$n_0\in\NN$, and
a unique pair 
$(\bPsi_n,\lambda_n)\in 
\cC_{0}(\Bar{B}_{n}\times\cS)\cap\Sobl^{2,p}(B_{n}\times\cS)\times\RR$,
$p>d$,
satisfying
\begin{equation}\label{E2.12}
\begin{split}
\bm\cA \bPsi_{n} &\,=\, -\lambda_n\, \bPsi_{n} \quad \mbox{in\ } B_n\times\cS\,,\\
\bPsi_{n} &\,=\, 0\quad \text{on\ } \partial{B_n}\times\cS\,,\\
\bPsi_{n} &\, > \, 0 \quad \text{in\ } B_n\times\cS\,,
\end{split}
\end{equation}
for all $n\ge n_0$. The uniqueness of $\bPsi_n$ holds up to a multiplicative constant.
Furthermore, by \cref{TA.3} and \eqref{EA.1}, we have
$\lambda_n>\lambda_{n+1}\ge \lamstr$.
Hence, it suffices show that the Dirichlet principal eigenvalues $\{\lambda_n\}$
form a monotone sequence that tends to the principal value $\lamstr$ as $n\to\infty$.
But this is a simple generalization of \cite[Lemma~2.2]{ABS19} to systems.

\begin{lemma}\label{L2.3}
Suppose that $\Tilde{\lambda} = \lim_{n\to\infty}\lambda_n> -\infty$.
Then the following hold:
\begin{itemize}
\item[(a)]
There exists a function
$\bPsi^*\in \Sobl^{2,p}(\Rd\times\cS)$, $\bPsi^* > 0$, satisfying
\begin{equation}\label{EL2.2A}
\bm\cA\bPsi^* \,=\, -\Tilde{\lambda} \bPsi^*\quad \text{in\ } \Rd\times\cS\,.
\end{equation}

\item[(b)]
It holds that $\Tilde{\lambda} = \lamstr$.

\item[(c)]
$\lamstr$ is concave in $c$.
\end{itemize}
\end{lemma}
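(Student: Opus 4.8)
The plan is to follow the standard Berestycki--Nirenberg--Varadhan exhaustion scheme, adapted to the weakly-coupled cooperative setting, using the Harnack inequality for cooperative systems to pass to the limit. First I would normalize the Dirichlet principal eigenfunctions by fixing a point $x_0$ in $B_{n_0}$ and setting, say, $\sum_{k\in\cS}\Psi_{n,k}(x_0)=1$ for every $n\ge n_0$. Since $\{\lambda_n\}$ is nonincreasing and bounded below by hypothesis, $\lambda_n\to\tilde\lambda\in\RR$. On any fixed ball $B_m\times\cS$ with $m<n$, the equation $\bm\cA\bPsi_n=-\lambda_n\bPsi_n$ together with the positivity of $\bPsi_n$ in $B_n$ and the Harnack inequality for cooperative elliptic systems gives a uniform-in-$n$ bound $\sup_{B_m}\Psi_{n,k}\le C_m\inf_{B_m}\Psi_{n,k}$ for each $k$; combined with the normalization at $x_0$ this yields uniform positive upper and lower bounds for $\bPsi_n$ on $B_m$. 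Then interior $\Sob^{2,p}$ estimates (Lemma~2.1-type, or the standard a priori estimates for the diagonal operators $L_k$ with the coupling moved to the right-hand side) give uniform $\Sob^{2,p}(B_m\times\cS)$ bounds. A diagonal argument over an increasing sequence $m\to\infty$, passing to a subsequence, produces $\bPsi^*\in\Sobl^{2,p}(\Rd\times\cS)$ with $\bPsi^*\ge 0$, $\bm\cA\bPsi^*=-\tilde\lambda\bPsi^*$; since $\Psi^*_k(x_0)>0$ for at least one $k$ (the normalization is preserved in the limit) and $\bm M$ is irreducible, the strong maximum principle (P2a)--(P2b) forces $\bPsi^*>0$ everywhere. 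This proves (a).

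For (b), one inequality is immediate: $\bPsi^*>0$ satisfies $\bm\cA\bPsi^*+\tilde\lambda\bPsi^*=0\le 0$, so $\bPsi^*\in\Uppsi^+(\tilde\lambda)$ and hence $\tilde\lambda\le\lamstr$. For the reverse inequality $\lamstr\le\tilde\lambda$, I would take any $\lambda$ with $\Uppsi^+(\lambda)\ne\varnothing$, so there is $\bm f>0$ with $\bm\cA\bm f+\lambda\bm f\le 0$ in $\Rd$, and compare $\bm f$ with $\bPsi_n$ on $B_n$. The function $\bm f$ is a positive supersolution of $\bm\cA\cdot+\lambda\cdot$ on $B_n$, while $\bPsi_n$ vanishes on $\partial B_n$; a comparison/sub-supersolution argument for the Dirichlet problem in $B_n$ — exactly the strong maximum principle (P2b) applied to a suitable scalar multiple $t\bm f-\bPsi_n$, choosing $t$ minimal so that $t\bm f\ge\bPsi_n$ in $\overline{B_n}$ — shows that at the contact point the inequality between $\lambda$ and $\lambda_n$ must go the right way, i.e. $\lambda\le\lambda_n$. (This is the usual argument: if $\lambda>\lambda_n$ then $t\bm f-\bPsi_n$ is a positive supersolution of $\bm\cA\cdot+\lambda_n\cdot\le\bm\cA\cdot+\lambda\cdot$ touching zero, contradicting (P2b) and irreducibility.) Letting $n\to\infty$ gives $\lambda\le\tilde\lambda$, and taking the supremum over such $\lambda$ gives $\lamstr\le\tilde\lambda$. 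Hence $\tilde\lambda=\lamstr$.

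For (c), concavity of $\bm c\mapsto\lamstr(\bm c)$ follows directly from the variational-type characterization \eqref{E1.9}: given $\bm c_0,\bm c_1$ and $\theta\in[0,1]$, if $\bm f_0\in\Uppsi^+_{\bm c_0}(\lambda_0)$ and $\bm f_1\in\Uppsi^+_{\bm c_1}(\lambda_1)$, I would form $\bm g$ with $\log g_k=\theta\log f_{0,k}+(1-\theta)\log f_{1,k}$ and use concavity of the map $\bm v\mapsto \trace(a_k\grad^2 v_k)+b_k\cdot\grad v_k+\ldots$ acting on logarithms — more precisely, the elementary inequality $\bm\cA^{\theta\bm c_0+(1-\theta)\bm c_1}\bm g\le\ldots$ obtained by writing $\bm\cA\bm g$ in terms of $\bm\psi=\log\bm g$ and exploiting that the ``carré du champ'' term $a_k|\grad\psi_k|^2$ is convex in $\grad\psi_k$, together with the linearity of the coupling after the substitution. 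This yields $\bm g\in\Uppsi^+_{\theta\bm c_0+(1-\theta)\bm c_1}(\theta\lambda_0+(1-\theta)\lambda_1)$, whence $\lamstr(\theta\bm c_0+(1-\theta)\bm c_1)\ge\theta\lamstr(\bm c_0)+(1-\theta)\lamstr(\bm c_1)$. The main obstacle I anticipate is the comparison step in part (b): making precise that the presence of the coupling matrix $\bm M$ does not spoil the contact-point argument requires using the cooperativity ($m_{ij}\ge 0$ for $i\ne j$) together with irreducibility so that the ``touching zero in one component'' scenario propagates — this is where (P2b) and Definition~1.1 do the real work, and it is the same mechanism already used in the proof of \cref{T1.7}.
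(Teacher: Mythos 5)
Your proposal is correct and, for part (a), follows the paper's argument essentially verbatim: normalize at a point, apply the Sirakov Harnack inequality for cooperative systems (with constants uniform in $n$ because $\lambda_n$ is bounded), get uniform $\Sob^{2,p}$ bounds, diagonalize, and recover strict positivity of the limit from the preserved normalization. The two places where you deviate are worth noting. In part (b), the paper obtains $\lamstr\le\tilde\lambda$ in one line: any $\bm f\in\Uppsi^+(\lambda)$ restricts to an element of $\Uppsi^+_{B_n}(\lambda)$, so $\lambda\le\lambda_n$ directly from the definition \cref{EA.1}; your contact-point comparison of $t\bm f$ with $\bPsi_n$ proves the same inequality but is the harder route --- it is essentially the uniqueness argument inside \cref{TA.1}, and if you run it you must be careful about which version of the maximum principle applies when the zeroth-order coefficient $c_k+\lambda_n$ changes sign (the paper handles this in the appendix by splitting into positive and negative parts and using the small-measure ABP-type estimate, not just \hyperlink{P2b}{(P2b)}). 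In part (c), the paper proves concavity of the Dirichlet eigenvalues $\lambda_{B_n}$ via the interpolation $f_k=\varphi_{1,k}^{\theta}\varphi_{2,k}^{1-\theta}$ (\cref{LA.2}) and then passes to the limit using (b); you apply the identical Young's-inequality computation directly to global supersolutions in $\Uppsi^+$, which is slightly cleaner in that it does not need (b) at all --- the computation goes through for supersolutions because the weights $\theta g_k/f_{0,k}$ and $(1-\theta)g_k/f_{1,k}$ are positive. Both routes are sound.
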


\begin{proof}
By \eqref{E2.12}, for each $n\ge n_0$, the function
$\bPsi_n=(\Psi_{n,1},\dotsc,\Psi_{n,N})$ satisfies 
\begin{equation*}
(\bcA\bPsi_{n})_k(x) \,=\, -\lambda_{n}(\bPsi_n)_{k}(x)\,,
\quad x\in B_n\,,\quad k\in\cS\,.
\end{equation*}
Let $\sK\subset B_n$ be any compact set, and without loss of generality,
assume that $0\in\sK$.
Scale $\bPsi_n$, so that $\min\bigl\{\Psi_{n,1}(0), \dotsc, \Psi_{n, N}(0)\bigr\}=1$.
Applying Harnack's inequality \cite[Theorem~2]{Sirakov} (see also \cite{AGM93,BS04}), we 
obtain
\begin{equation*}
\sup_{y\in\sK}\,\max_{k\in\cS}\,\Psi_{n,k}(y) \,\le\, C_{\mathrm{H}} \,,
\end{equation*}
for some constant $C_{\mathrm{H}}$ independent of $n$.
Thus, by \cite[Theorem~9.11]{GilTru}, it follows that for any domain $Q\subset \sK$
and any $p>d$, there exists a constant $\kappa_1$ such that
\begin{equation*}
\bnorm{\bPsi_n}_{\Sob^{2,p}(Q\times\cS)} \,\le\, \kappa_{1}\quad\forall\,n\ge n_0\,.
\end{equation*}
Hence, by a standard diagonalization argument, we can extract a subsequence
$\{\bPsi_{n_k}\}$ such that
\begin{equation*}
\bPsi_{n_k}\to \bPsi^*\quad \text{in\ } \Sobl^{2,p}(\Rd\times\cS)\,, \quad
\text{and}\quad \bPsi_{n_k}\to \bPsi^*\quad \text{in\ }
\cC^{1, \alpha}_{\text{loc}}(\Rd\times\cS)
\end{equation*}
for some $\bPsi^*\in \Sobl^{2,p}(\Rd\times\cS)$.
Moreover, we have
\begin{equation}\label{EL2.3B}
\bm\cA\bPsi^* \,=\, -\Tilde{\lambda}\bPsi^*\quad\text{in}\,\, \Rd\,.
\end{equation}
Since $\min_{k\in\cS}\Psi_{k}^*(0) \ge 1$, another application of Harnack's
inequality shows that $\bPsi^* > 0$ in $\Rd\times\cS$.
This gives us \eqref{EL2.2A} and hence the proof of part (a) is complete.

We continue with part (b).
It is clear from \cref{EL2.3B} and 
\cref{E1.9} that $\Tilde{\lambda} \le \lamstr$.
Again, from the definition in \cref{EA.1}, we have $\lambda_{n} \ge \lamstr$
for all $n\in\NN$. This implies that $\Tilde{\lambda} \ge \lamstr$.
Therefore, we obtain $\Tilde{\lambda} = \lamstr$. This proves part (b).

Part (c) follows from \cref{LA.2} and (b).
\end{proof}

\begin{remark}
Since $\tilde\lambda=-\infty$ implies $\lamstr=-\infty$,
it follows from the proof of \cref{L2.3} that $\Tilde\lambda=\lamstr\in(-\infty, \infty]$. 
\end{remark}

Now we can prove \cref{T1.2}

\begin{proof}[\bf Proof of Theorem~\ref{T1.2}]
In view of \cref{L2.3} we only need to consider the case $\lambda < \lamstr$.
It follows from \cref{TA.3} that for any bounded domain
$D$ we have $\lambda_D(\bm\cA + \lambda) > 0$ where $\lambda_D(\bm\cA + \lambda)$
is the Dirichlet principal eigenvalue of $\bm\cA+\lambda$ in $D$ (see \cref{A-Eigen}).
Let $n\ge n_0$.
Let $D_n\Subset B_n\setminus B_{n-1}$ be a ball of radius $\nicefrac{1}{4}$.
Then using \cref{LA.3} we can find a positive function $\widetilde{\bPsi}_n$
satisfying
$(\bm\cA+\lambda)\widetilde{\bPsi}_n = -\Ind_{D_n\times\cS}$ in $B_n\times\cS$.
We scale $\widetilde{\bPsi}_n$ so that
$\min\,\bigl\{\widetilde{\Psi}_{n,1}(0),\dotsc, \widetilde{\Psi}_{n, N}(0)\bigr\} = 1$.
Now using Harnack's inequality \cite{Sirakov}, it is easy to show that the functions
$\{\Tilde{\bPsi}_n\}$ are locally uniformly bounded, and therefore
by standard elliptic estimates they are locally uniformly bounded in $\Sob^{2,p}$.
Thus we can extract a subsequence and find an eigenfunction $\bPsi$ solving
$\bm\cA\bPsi + \lambda\bPsi=0$.
Again, we have $\bPsi> 0$ by the strong maximum principle.
This completes the proof.
\end{proof}

At this point we recall that for the remaining part we need \cref{H1.1}, which
is enforced without any further mention. Next we produce a proof of \cref{T1.4}

\begin{proof}[\bf Proof of \cref{T1.4}]
Let $(\bPsi, \lambda)$ be an eigenpair with $\lambda<\lamstr$ and $\widetilde{\bLg}$
be the corresponding twisted operator. Suppose, on the contrary, that  
$\widetilde{\bLg}$ is recurrent. Let $\Phi_k=\frac{\Psi^*_k}{\Psi_k}$. Using \cref{L2.2} we 
then get
$$ \widetilde{\bLg}\bPhi \,=\, (\lambda-\lamstr)\bPhi \,<\, 0\quad \text{in\ } \Rd\times\cS\,.$$
Since $\widetilde{\bLg}$ is recurrent, applying \cref{P2.1} we see that $\bPhi$ is 
constant, i.e., for some $c>0$ we have $\bPhi=(c,c,\ldots, c)$.  This also gives us
$(\lambda-\lamstr)=0$ which is a contradiction. This completes the proof.
\end{proof}

Next we show that minimal growth at infinity is equivalent to the monotonicity property
on the right.

\begin{proof}[\bf Proof of \cref{T1.8}]
First we show that (b)$\,\Rightarrow\,$(a). Consider a
function $\bm{h}\in \sB^+_0(\Rd\times\cS)$. From
the definition of $\lamstr$ in \eqref{E1.9} it follows that 
$\lamstr(\bm{c}+\bm{h})\le \lamstr(\bm{c})$. Now suppose, to the contrary, that 
$\lamstr(\bm{c}+\bm{h})= \lamstr(\bm{c})$. Let $\widehat\bPsi$ be a principal eigenfunction 
corresponding to $\lamstr(\bm{c}+\bm{h})$. Then
\begin{equation}\label{ET1.8A}
\bLg\widehat\bPsi + (\bm{c}+\lamstr)\widehat\bPsi\,\le\,
\bLg\widehat\bPsi + (\bm{c}+\bm{h}+\lamstr)\widehat\bPsi\,=\, 0\quad \text{in\ }
\Rd\times\cS.
\end{equation}
Since $\bPsi^*$ has minimal growth property at infinity, it follows from the proof
of \cref{T1.7}  that $\widehat\bPsi= \kappa \bPsi^*$ for some $\kappa>0$.
From \eqref{ET1.8A} we then get $\bm{h}\widehat\bPsi=0$, which contradicts
to the fact that $\bm{h}\neq 0$. Hence we must have $\lamstr(\bm{c}+\bm{h})< 
\lamstr(\bm{c})$. Thus we get (a).

Next, we show that (a)$\,\Rightarrow\,$(b). We construct a principal eigenfunction $\bPsi^*$
with the minimal growth at infinity. Let $\bm{f}=\Ind_{B_1\times\cS}$ and
$\bPsi_n\in\Sobl^{2, p}(B_n\times\cS)\cap \cC(\Bar{B}_n\times\cS)$ be the unique solution 
of 
\begin{equation}\label{ET1.8B}
\begin{split}
\bLg\bPsi_n + (\bm{c}+\lamstr)\bPsi_n &\,=\, - \bm{f} \quad \text{in\ } B_n\times\cS,
\\
\bPsi_n &\,>\, 0 \quad \text{in\ } B_n\times\cS,
\\
\bPsi_n &\,=\, 0 \quad \text{on\ } \partial{B}_n\times\cS.
\end{split}
\end{equation}
Existence of $\bPsi_n$ follows from \cref{LA.3}. Let
$$\beta_n\,\df\,\max_{j\in\cS}\,\max_{\Bar B_1}\, (\bPsi_n)_j\,.$$
We claim that $\beta_n\to\infty$ as $n\to\infty$. Arguing
by contradiction, suppose that $\{\beta_{n_k}\}$ is bounded for some subsequence $\{n_k\}$.
Let
$$
\kappa_n\,\df\,\sup\,\{t\,\colon \bPsi^*-t\bPsi_n>0\; \text{in\ } \Bar{B}_1\times\cS\}\wedge 1\,,
$$
where $(\bPsi^*,\lamstr)$ denotes the principal eigenpair. Note that, by this
hypothesis, we have $\inf_{n_k} \kappa_{n_k}>0$. Letting $\bm{v}_n=\kappa_n\bPsi_n$,
we note that $\bm{v}_{n_k}\le \bPsi^*$ in $\Bar{B}_1\times\cS$, and 
$$
\bLg (\bPsi^*-\bm{v}_{n_k}) + (\bm{c}+\lamstr)(\bPsi^*-\bm{v}_{n_k}) \,=\,0\quad 
\text{in\ } 
(B_n\setminus \Bar{B}_1)\times\cS.
$$
Since $\lambda_{B_n\cap B^c_1}>0$,  it follows from \eqref{ELA.3A} that 
$\bm{v}_{n_k}\le \bPsi^*$ in $B_n\times\cS$. Now using standard elliptic pde
estimates we can find a subsequence of $\bm{v}_{n_k}$ converging to 
some non-negative $\bm{v}$, and using \eqref{ET1.8B} we have
$$\bLg\bm{v} + (\bm{c}+\lamstr)\bm{v} \,=\, - \kappa\bm{f}\,\le\, 0 \quad \text{in\ } 
\Rd\times\cS,$$
where $\kappa>0$ is obtained along some subsequential limit of $\{\kappa_{n_k}\}$.
By the strong maximum principle, either $\bm{v}=0$ or $\bm{v}>0$ in $\Rd$. The former 
is not possible as $f\neq 0$. So we must have $\bm{v}>0$. Letting 
$h_k=\kappa\frac{f_k}{v_k}$
we obtain from above that
$$\bLg\bm{v} + (\bm{c}+ \bm{h}+\lamstr)\bm{v} \,=\, 0 \quad \text{in\ } \Rd\times\cS\,.$$
From \eqref{E1.9} we then have $\lamstr(\bm{c}+ \bm{h})\ge \lamstr$ which contradicts
the hypothesis of monotonicity on the right. Therefore, we must have 
$\beta_n\to\infty$ as $n\to\infty$. In this case we have $\kappa_n\to 0$ as $n\to\infty$. 
Also, we note that for all large $n$ we have 
\begin{equation}\label{ET1.8C}
\min_{j}\,\min_{\Bar B_1}\, (\Psi^*_j-(\bm{v}_n)_j)\,=\,0\,.
\end{equation}
As before, we can pass to the limit in \eqref{ET1.8B} (after multiplying both sides by
$\kappa_n$) to obtain a non-negative solution $\bm{v}$ of 
$$\bLg\bm{v} + (\bm{c}+\lamstr)\bm{v} \,=\, 0 \quad \text{in\ } \Rd\times\cS\,.$$
In view of \eqref{ET1.8C}, we must have
\begin{equation*}
\min_{j}\,\min_{\Bar B_1}\, (\Psi^*_j-v_j) \,=\, 0\,,
\end{equation*} 
which means $\bm{v}>0$ in $\Rd\times\cS$.

To complete the proof, it is enough to show that $\bm{v}$ has minimal growth at infinity. 
Consider a positive $\bPhi\in\Sobl^{2,p}((K\times\cS_1)^c)$, $p>d$, satisfying
\begin{equation*}\bLg\bPhi + (\bm{c}+\lamstr)\bPhi \,\le\,  0 \quad \text{in\ } 
(K\times\cS_1)^c\,.
\end{equation*}
Let $B$ be a ball large enough so that $B\Supset K\cup B_1$.
Choose $\varrho$ large enough to that
$$\sup_n\, \sup_j\, \sup_{\Bar B}\, \bigl((\bm{v}_n)_j-\varrho\Phi_j\bigr) \,\le\, 0\,.$$
As earlier, applying the maximum principle we see that $\bm{v}_n\le \varrho\bPhi$ in 
$B_n\times\cS$. Passing to limit, as $n\to\infty$, we obtain $\bm{v}\le\varrho\bPhi$.
This completes the proof.
\end{proof}

We continue with the proof of \cref{T1.5}.

\begin{proof}[\bf Proof of \cref{T1.5}]
First we show that (a)$\,\Rightarrow\,$(b). Take $\bm{h}\in\sB^+_0(\Rd\times\cS)$.
We need to
show that $\lamstr(\bm{c}+\bm{h})<\lamstr(\bm{c})$. Suppose, on the contrary, that
$\lamstr(\bm{c}+\bm{h})=\lamstr(\bm{c})$. Let $\widehat\bPsi$ be a principal eigenfunction 
corresponding to the eigenvalue $\lamstr(\bm{c}+\bm{h})$, that is,
$$\bLg\widehat\bPsi + (\bm{c}+\bm{h}+\lamstr(c))\widehat\bPsi \,=\, 0
\quad \text{in\ } \Rd\times\cS\,.$$
Let $\Phi_k\df\frac{\widehat\Psi_k}{\Psi^*_k}$. Then from \cref{L2.2} we obtain
\begin{equation}\label{ET1.5A}
\widetilde\bLg^{\bm{\psi}^*}\bPhi\,=\, -\bm{h}\bPhi\,\le\, 0\quad \text{in\ } 
\Rd\times\cS\,.
\end{equation}
Since $\widetilde\bLg^{\bm{\psi}^*}$ is recurrent, it follows from \cref{P2.1} that
$\bPhi$ is a constant. This implies from \eqref{ET1.5A} that $\bm{h}\bPhi=0$, which
contradicts the fact that $\bm{h}\neq 0$. Thus we must have 
$\lamstr(\bm{c}+\bm{h})<\lamstr(\bm{c})$.

Next, we prove that (b)$\,\Rightarrow\,$(a). From \cref{T1.8} we know that monotonicity property
on the right is equivalent to the minimal growth at infinity of the principal 
eigenfunction. Therefore, we assume that the principal eigenpair $(\bPsi^*, \lamstr)$
has minimal growth property at infinity and show that (a) holds.
Suppose, on the contrary, that $\widetilde\bLg^{\bm\psi^*}$ is not recurrent.
In view of step 2 of the proof of \cref{T1.1}, 
we can find a non-zero bounded solution $\bm{u}$ of 
$$\widetilde\bLg^{\bm\psi^*}\bm{u} \,=\, 0\quad \text{in\ } (B\times\{i\})^c,
\quad \bm{u}=0\quad \text{in\ } \Bar{B}\times\{i\},$$
for some ball $B$ and some $i\in\cS$. We may also choose $B$ small enough so that 
$\bm{M}$ is
irreducible in $\Rd\setminus B$.
Scale $\bm{u}$
in such a fashion that $\sup_j\sup_{\Rd\,}u_j=1$. Since $\bm{u}$ is non-zero, this
supremum value is not attained in $\Rd\times\cS$. 
Define $\Phi_k\df\Psi^*_k\,(1-u_k)$.  Then $\bPhi$ is continuous and positive in 
$\Rd\times\cS$. Furthermore, $\bPhi\in\Sobl^{2, p}((B\times\{i\})^c)$ for $p>d$,
and applying \cref{L2.2} we also have
$$\bLg\bPhi \,=\, (1-\bm{u})\bLg\bPsi^* - \bPsi^*\,\widetilde\bLg^{\bm\psi^*}\bm{u}
\,=\, -(\bm{c}+\lamstr)\bPhi \quad \text{in\ } \bigl(B\times\{i\}\bigr)^c.$$
Since $\sup_j\sup_{\Rd\,}u_j=1$ there is no constant $\kappa>0$ satisfying
$(1-\bm{u})\bPsi^*=\bPhi\ge \kappa\bPsi^*$ in $\Rd\times\cS$. This contradicts
the minimal growth hypothesis of $\bPsi^*$. Hence $\widetilde\bLg^{\bm\psi^*}$ 
must be recurrent, and this implies (a).

Finally, simplicity of $\lamstr$ follows from \cref{T1.7}.
\end{proof}

We continue with the proof of \cref{T1.6}.

\begin{proof}[\bf Proof of \cref{T1.6}]
First we show that if (b) holds then there exists a principal eigenfunction
$\bPsi^*$ such that for some Lyapunov function $\bm\Lyap$ we have
\begin{equation}\label{ET1.6A}
\widetilde\bLg^{\bm\psi^*}\bm\Lyap \, \le\, \kappa_0\,\Ind_{K\times\cS} -\kappa_1 \bm\Lyap 
\quad \text{in\ } \Rd\times\cS\,,
\end{equation}
for some constants $\kappa_0, \kappa_1$ and a compact set $K$. Let 
$\bm{h}\in\sB^+_0(\Rd\times\cS)$ be such that $\lamstr(\bm{c}-\bm{h})>\lamstr(\bm{c})$,
and $(\bPsi_{\bm h}, \lamstr(\bm{c}-\bm{h}))$ be a principal eigenpair
with respect to  the potential $\bm{c}-\bm{h}$. 
Let $2\delta=\lamstr(\bm{c}-\bm{h})-\lamstr(\bm{c})$ and $\sB$ be such that 
$\sup_{\sB^c\times\cS} \bm{h}(x,i)\le \delta$.

Recall that 
$(\bPsi_n,\lambda_n)$ is the Dirichlet principal eigenpair in the ball $B_n$, that is,
\begin{equation*}
\bcA\bPsi_{n} \,=\, -\lambda_{n}\,\bPsi_n\,,
\quad \text{in\ } B_n\times\cS, \quad \text{and}\quad \bPsi_n=0\quad \text{on\ } 
\partial B_n\times\cS\,.
\end{equation*}
Define
\begin{equation*}
\kappa_n \,\df\, \sup\,\{\kappa>0\,\colon \bPsi_{\bm h}>\kappa\bPsi_n
\text{\ in\ } B_n\}\,.
\end{equation*}
Thus $\kappa_n\bPsi_n$ must touch $\bPsi_{\bm h}$ at some point from below.
We claim that any such point must lie in $\Bar\sB\times\cS$.
Note that for $x\in\sB^c$ we have 
\begin{equation*}
(\bLg(\bPsi_{\bm h}-\kappa_n\bPsi_n))_k +(c_k-\lambda_n)((\bPsi_{\bm{h}})_k
-\kappa_n(\bPsi_{n})_k) \,\le\, 0\,,
\end{equation*}
for all $k$, for all $n$ large so that $\lamstr(\bm{c}-\bm{h})>\lambda_n+\delta$. If 
$\kappa_n\bPsi_n$ 
touches
$\bPsi_{\bm h}$ in $B_n\setminus\sB$,
then by strong maximum principle some component of $\bPsi_{\bm h}-\kappa_n\bPsi_n$
must vanish in $B_n\setminus\sB$ which is not
possible since $\bPsi_{\bm h}-\kappa_n\bPsi_n> 0$ on $\partial{B}_n\times\cS$.
This proves the claim.
Therefore, using Harnack's inequality \cite{Sirakov}, we take limits as $n\to\infty$,
and see that $\kappa_n\bPsi_n$ converges
to the principal eigenfunction $\bPsi^*$ which touches $\bPsi_{\bm h}$ from below in
$\Bar{\sB}\times\cS$. In particular, we have $\bPsi^*\le \bPsi_{\bm h}$.

Now define $\Lyap_k(x) \df \frac{\bPsi_{\bm{h}}(x, k)}{\bPsi^*(x, k)}$. It is
then evident that $\bm\Lyap\ge 1$.
Then applying \cref{L2.2} we deduce that
\begin{equation*}
\widetilde\bLg^{\bm\psi^*} {\bm\Lyap} \,=\, (\bm{h} - \lamstr(\bm c - \bm h) + 
\lamstr)\bm\Lyap 
\le \kappa_0 \Ind_{\sB\times\cS} -\delta\bm\Lyap\, \quad \text{in\ }
\Rd\times\cS\,,
\end{equation*}
for some constant $\kappa_0$. This proves \eqref{ET1.6A}.

We continue by showing that (a)$\,\Rightarrow\,$(c). In fact, we show
that the Lyapunov condition \eqref{ET1.6A}
implies (c).  Suppose, on the contrary, that
for some $\bm{h}\in\sB^+_0(\Rd\times\cS)$ we have $\lamstr(\bm{c}-\bm{h})=
\lamstr(\bm{c})$.
Without any loss of generality, we may assume $\bm{h}$ is compactly 
supported.
Let $(\bm\xi_n, \beta_n)$ be the sequence of Dirichlet eigenpairs
corresponding to the potential $\bm{c}-\bm{h}$ in $B_n$. That is
\begin{equation}\label{ET1.6B}
\begin{split}
\bLg\bm\xi_n + (\bm{c}-\bm{h}+\beta_n)\bm\xi_n &\,=\, 0 \quad \text{in\ } B_n\times\cS,
\\
\bm\xi_n &\,>\, 0 \quad \text{in\ } B_n\times\cS,
\\
\bm\xi_n &\,=\, 0 \quad \text{on\ } \partial{B}_n\times\cS.
\end{split}
\end{equation}
Then we have $\beta_n\to\lamstr$.
Let $(\bm{u}_n)_k \df\frac{(\bm\xi_n)_k}{\Psi^*_k}$. Using \cref{L2.2} and 
\eqref{ET1.6B} we then
obtain that
\begin{equation}\label{ET1.6C}
\widetilde\bLg^{\bm\psi^*} \bm{u}_n\,=\, (\lamstr-\beta_n + \bm{h}) \bm{u}_n
\quad \text{in\ } B_n\times\cS\,.
\end{equation}
Recall the compact set $K$ from \eqref{ET1.6A}, and scale $\bm{u}_n$ to satisfy
$\max_{j}\max_K (\bm{u}_n)_j=1$. Let $\bm{u}$ be the limit of $\bm{u}_n$,
which exists due to Harnack inequality \cite{Sirakov} and \eqref{ET1.6C}. 
We claim that $\bm{u}\le \bm\Lyap^\alpha$ in $\Rd\times\cS$, for any
$\alpha\in (0,1)$, where 
$\bm\Lyap^\alpha=(\Lyap_1^\alpha, \Lyap_2^\alpha, \ldots, \Lyap^\alpha_N)$.
To prove the claim, first note that in $(K\times\cS)^c$ we have
\begin{equation}\label{ET1.6D}
\widetilde\bLg^{\bm\psi^*}\,\bm\Lyap^\alpha\,\le\, -\alpha \kappa_1 \bm\Lyap^\alpha.
\end{equation}
Indeed, for $\alpha\in(0, 1)$, we have from H\"{o}lder's inequality that
$$
\Lyap^\alpha_k \,=\, \Lyap^\alpha_k\, \Lyap^{\alpha(\alpha-1)}_i\, 
\Lyap^{\alpha(1-\alpha)}_i \,\le\, 
\alpha\, \Lyap_k\, \Lyap^{(\alpha-1)}_i + (1-\alpha) \Lyap^{\alpha}_i\quad
\text{for all\ } i, k\in\cS\,.
$$
Therefore, in $(K\times\cS)^c$, we get
\begin{align*}
\left(\widetilde\bLg^{\bm\psi^*}\,\bm\Lyap^\alpha\right)_i
&\,=\, \alpha \Lyap^{\alpha-1}_i (\widetilde\bLg^{\bm\psi^*}\,\bm\Lyap)_i
- \alpha \Lyap^{\alpha-1}_i \sum_{k\neq i} \tilde{m}_{ik} (\Lyap_k-\Lyap_i)
+ \alpha (\alpha-1) \Lyap_i^{\alpha-2}\,\grad\Lyap_i a_i\cdot\grad\Lyap_i
\\
&\mspace{100mu} + \sum_{k\neq i} \tilde{m}_{ik} (\Lyap^\alpha_k-\Lyap^\alpha_i)
\\
&\,\le\, -\alpha\kappa_1 \Lyap^\alpha_i + 
\sum_{k\neq i} \tilde{m}_{ik} (\Lyap^\alpha_k-\alpha\Lyap_k \Lyap_i^{\alpha-1})
- (1-\alpha) \sum_{k\neq i} \tilde{m}_{ik} \Lyap_i^\alpha
\\
&\,\le\, -\alpha\kappa_1 \Lyap^\alpha_i\,.
\end{align*}
Next, enlarge $K$  to contain support of $\bm{h}$. Now suppose that 
$\bm{u}_n>\bm\Lyap^\alpha$ 
at some point in $B_n\times\cS$. Since $\bm\Lyap\ge 1$ and 
$\max_j\max_K (\bm{u}_n)_j= 1$, there should be a point in 
$K^c\times\cS$ where $\bm{u}_n>\bm\Lyap^\alpha$. Choose $\kappa\in (0, 1)$ so that 
$\kappa \bm{u}_n\le \bm\Lyap^\alpha$ in $B_n\times\cS$ and 
$\kappa \bm{u}_n(x_0, i_0)=\bm\Lyap^\alpha(x_0, i_0)$ for some $x_0\in B_n\cap K^c$
and some $i_0\in\cS$. Using \eqref{ET1.6C} and \eqref{ET1.6D}, we obtain
\begin{equation*}
\widetilde\bLg^{\bm\psi^*} (\bm\Lyap^\alpha-\kappa \bm{u}_n)
\,\le\, -\alpha \kappa_1 \bm\Lyap^\alpha - (\lamstr - \beta_n + \bm{h})\kappa \bm{u}_n
\,\le\, \bigl(-\alpha\kappa_1  + |\lamstr-\beta_n|\bigr) \kappa \bm{u}_n \,<\, 0
\end{equation*}
for all large enough $n$.
Thus, writing $\bm\zeta= \bm\Lyap^\alpha-\kappa \bm{u}_n$, we see from above that
$$ \trace\bigl(a_{i_0}\grad^2 \zeta_{i_0}\bigr)
+ (b_{i_0} + 2a_{i_0}\grad\psi^*_{i_0})\cdot \grad \zeta_{i_0}
+  \widetilde{m}_{i_0i_0}(x)\zeta_{i_0}
\,\le\,\bigl(\widetilde\bLg^{\bm\psi^*}\bm\zeta\bigr)_{i_0}\,\le\, 0\quad \text{in\ } B_n\cap K^c.$$
Since $\zeta_{i_0}(x_0)=0$,  we must have 
$\zeta_{i_0}=0$ in $B_n\cap K^c$ by the strong maximum principle.
But this is not possible since $\zeta_{i_0}>0$ on
$\partial B_n$. Therefore, we must have $\bm{u}_n\le\bm\Lyap^\alpha$ in $B_n\times\cS$. 
Now letting $n\to\infty$, we establish the claim of $\bm{u}\le \bm\Lyap^\alpha$ in $\Rd\times\cS$, 
for any $\alpha\in (0,1)$.

Letting $\alpha\to 0$
we get $\bm{u}\le 1$. This of course, implies that $\bm{u}$ attains its maximum in
$K\times\cS$. Since 
$$\widetilde\bLg^{\bm\psi^*} \bm{u}\,=\, \bm{h}\bm{u}\,\ge\, 0\quad \text{in\ } \Rd\times\cS\,,$$
we must have $\bm{u}=(1, 1,\ldots, 1)$ by the strong maximum principle. But this
implies $\bm{h}\bm{u}=0$ which is not possible since $\bm{h}\neq 0$. Therefore,
the original hypothesis
that $\lamstr(\bm{c}-\bm{h})=\lamstr(\bm{c})$ cannot be correct. This 
establishes the inequality $\lamstr(\bm{c}-\bm{h})>\lamstr(\bm{c})$, giving us (c).

It is obvious that (c)$\,\Rightarrow\,$(b). Next suppose (b) holds. Then as have shown 
in \eqref{ET1.6A}, a Lyapunov function exists for $\widetilde\bLg^{\bm\psi^*}$. Applying
the preceding argument we see that $\lamstr(\bm{c}-\bm{h})>\lamstr(\bm{c})$ for every 
$\bm{h}\in\sB^+_0(\Rd\times\cS)$. 
Since $\RR\ni s\mapsto \lamstr(\bm{c}+s\bm{h})$ is decreasing and 
concave (\cref{L2.3}), we get that $\lamstr(\bm{c}+\bm{h})<\lamstr(\bm{c})$ for all 
$\bm{h}\in\sB^+_0(\Rd\times\cS)$. 
From \cref{T1.5}
we then see that $\widetilde\bLg^{\bm\psi^*}$ is recurrent and hence regular.
Combining this with \eqref{ET1.6A}
we get (a). This completes the proof.
\end{proof}
The remaining part of this section is devoted to the proof of \cref{T1.9}.
Let us first introduce the {\it twisted switching diffusion process}. Given
an eigenpair $(\Psi, \lambda)$ we recall the twisted operator from \eqref{E-tbLg},
which was defined as
\begin{equation}\label{E2.23}
\widetilde\bLg^{\bm\psi} \bm{f} (x) \,\df\, \bm{L}^{\bm\psi}\bm{f}(x)
+ \widetilde{\bm{M}}(x)\bm{f}(x)\,, \quad x\in\Rd\,.
\end{equation}
The corresponding {\it twisted switching diffusion} is
defined by
\begin{equation}\label{Sw-diff}
\begin{aligned}
\D \Tilde X_t &\,=\, b(\Tilde X_t, \Tilde S_t)\,\D t
+ 2 a(\Tilde X_t, \Tilde S_t) \grad\bm\psi (\Tilde X_t, \Tilde S_t) \D{t} +
\upsigma(\Tilde X_t, \Tilde S_t)\, \D \widetilde W_t\,,\\
\D \Tilde S_t &\,=\, \int_{\RR} \Tilde h(\Tilde X_t,\Tilde S_{t^-},z)
\widetilde\wp(\D t, \D z)
\end{aligned}
\end{equation}
for $t\ge 0$, where
\begin{itemize}
\item[(i)]
$\widetilde W$ is a $d$-dimensional standard Wiener process;
\item[(ii)]
$\widetilde\wp(\D t, \D z)$ is a Poisson random measure on $\RR_{+}\times\RR$
with intensity $\D t\times \D z$;
\item[(iii)]
$\widetilde\wp(\cdot,\cdot)$ and $\widetilde W(\cdot)$ are independent;
\item[(iv)]
The function $\Tilde h\colon\Rd\times\cS\times\RR \to \RR$ is defined by
\begin{equation*}
\Tilde h(x,i,z)\,\df\,\begin{cases}
j - i & \text{if}\,\, z\in \Vt_{ij}(x),\\
0 & \text{otherwise},
\end{cases}   
\end{equation*}
where for $i,j\in\cS$ and fixed $x$, $\Vt_{ij}(x)$ are left closed right open
disjoint intervals of $\RR$ having length
$\widetilde{m}_{ij}(x)=\frac{\Psi_j(x)}{\Psi_i(x)}m_{ij}(x)$ for $j\neq i$, and
\begin{equation*}
\widetilde{m}_{ii}(x) \,=\, -\sum_{j\neq i} \widetilde{m}_{ij}(x)\,.
\end{equation*}
\end{itemize}
In what follows, we use the notation $\widetilde\Prob$ and $\widetilde\Exp$
to denote the law of the twisted switching diffusion and the expectation with respect
to this probability measure, respectively.
It is clear that the extended generator of $(\tilde{X}, \tilde{S})$ is given by 
\eqref{E2.23}.
We write the generator simply as $\widetilde\bLg$ when the dependence on $\bm\psi$ is 
clear. Below we recall the probabilistic definition of 
recurrence and transience of a regime switching diffusion. For
a process $\{(X_t,S_t)\}_{t\ge0}$ on $\Rd\times\cS$,
denote by $\uptau(A)$ the first exit time
from the set $A\subset\RR^{d}\times\cS$, defined by
\begin{equation*}
\uptau(A) \,\df\, \inf\,\bigl\{t>0\,\colon (X_t,S_t)\not\in A\bigr\}\,.
\end{equation*}
The open ball of radius $r$ in $\RR^{d}$, centered at the origin,
is denoted by $B_{r}$, and we let $\uptau_{r}\df \uptau(B_{r})$,
$\uuptau_{r}\df \uptau(B^{c}_{r})$,  and
$\uuptau_{r}^{i}\df \uptau\bigl((\sB_{r}\times\{ i \})^{c}\bigr)$ for $i\in\cS$.

\begin{definition}
The regime switching diffusion is said to be \emph{recurrent} if for any ball
$\sB\Subset\Rd$ and $j\in\cS$ we have
\begin{equation*}
\Prob_{x,i}(\uuptau(\sB\times\{j\})<\infty) \,=\, 1\,,
\end{equation*}
for all $(x,i)\in\Rd\times\cS$.
It is said to be \emph{transient} if for all $(x,i)\in\Rd\times\cS$ 
we have
\begin{equation*}
\Prob_{x,i}\left(\lim_{t\to\infty} |\Tilde{X}(t)|=\infty\right) \,=\, 1\,.
\end{equation*}
Given a set $A$ and $j\in\cS$, we define the hitting time to $A\times\{j\}$ by
$$\uuptau(A, j)\,\df\, \inf\,\bigl\{t>0\,\colon (X_t, S_t)\in A\times\{j\}\bigr\}\,.$$ 
\end{definition}
It is well known that a regime switching diffusion is either transient or recurrent 
\cite[Chapter~5]{book}, \cite[Chapter~3]{YZ10}
(the irreducibility of $\bm{M}$ is crucial for this statement).
Furthermore, a regime switching diffusion is transient if and only if for every $\bm 
g\in\cC_c(\Rd\times\cS)$ we have
$$\Exp_{x,i}\left[\int_0^\infty \bm g(X_t, S_t) \D{t}\right] \,<\, \infty$$
for all $(x,i)$ (cf. \cite[Proposition~3.19]{YZ10}). Furthermore, as discussed in 
\cref{R1.1}, the recurrence of $(X, S)$ is equivalent to the recurrence of its
extended generator $\bLg$ in the sense of \cref{D1.2}.


The next lemma is the heart of the proof of \cref{T1.9}. It is basically a change of 
measure type result.
This should be compared with \cite[Lemma~2.3]{ABS19}.
In the case of scalar equation this change of measure 
is a straight-forward application of Girsanov's transformation.
But in the case of a regime switching diffusion
such a transformation is not known for the general model given above.
The Girsanov transformation for a simpler
setting can be found in \cite[Theorem~3.2]{Chan99}.

\begin{lemma}\label{L2.4}
For any $\bm g\in\cC_{c}(\Rd\times\cS)$, and any eigenpair $(\bPsi,\lambda)$,
that is, a pair satisfying $\bm\cA\bPsi+\lambda\bPsi=0$, we have
\begin{equation}\label{EL2.4A}
\Exp_{x,k} \left[ \E^{\int_0^{T} (\bm c(X_s,S_s)+\lambda) \D{s}}\,
\bm g(X_{T}, S_{T}) \bPsi(X_{T}, S_{T})\,\right]
\,=\, \bPsi(x,k)\,\widetilde\Exp_{x,k}^{\bm\psi}
\left[\bm g(\Tilde X_{T}, \Tilde S_{T})\Ind_{\{T < \uptau_\infty\}}\,\right]\,,
\end{equation}
for all $(x,k)\in\Rd\times\cS$,
where $\widetilde\Exp_{x,k}^{\bm\psi}$ is the expectation operator with respect to
the law of the twisted process \cref{Sw-diff}, and
$\uptau_\infty \df \lim_{n\to\infty}\uptau_{n}$ is the explosion time.
\end{lemma}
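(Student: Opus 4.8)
The plan is to reduce the identity to a family of parabolic Dirichlet problems on the balls $B_n$ and to use the conjugation identity of \cref{L2.2} as the substitute for Girsanov's transformation. Fix $T>0$. For large $n$, let $\uptau_n\df\inf\{t>0\colon X_t\notin B_n\}$ and let $\widetilde\uptau_n$ be the corresponding exit time of the twisted process \eqref{Sw-diff}, so that $\uptau_\infty=\lim_n\widetilde\uptau_n$ is the explosion time appearing in the statement. Because $\bm\psi=\log\bPsi$ with $\bPsi\in\Sobl^{2,p}(\Rd\times\cS)$, $p>d$, the embedding $\Sobl^{2,p}\hookrightarrow\cC^{1,\alpha}$ shows the twisted drift $b_k+2a_k\grad\psi_k$ is continuous, and together with $\widetilde{\bm M}$ it is bounded on $B_n$; since $\bm a$ is uniformly elliptic there, the martingale problem for $\widetilde\bLg^{\bm\psi}$ localized to $B_n\times\cS$ is well posed. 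By the Feynman--Kac formula and interior $L^p$ parabolic regularity,
\[
\widetilde v_n(t,x,k)\,\df\,\widetilde\Exp^{\bm\psi}_{x,k}\bigl[\bm g(\Tilde X_t,\Tilde S_t)\,\Ind_{\{t<\widetilde\uptau_n\}}\bigr]
\]
is the unique bounded solution of $\partial_t\widetilde v_n=\widetilde\bLg^{\bm\psi}\widetilde v_n$ in $(0,T)\times B_n\times\cS$ with $\widetilde v_n=0$ on $(0,T)\times\partial B_n\times\cS$ and $\widetilde v_n(0,\cdot)=\bm g$; moreover $\widetilde v_n(t,\cdot)\in\Sobl^{2,p}(B_n\times\cS)$ and $\partial_t\widetilde v_n\in L^p_{\mathrm{loc}}$ for $t>0$.

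Then I would conjugate by $\bPsi$: set $(w_n)_k(t,x)\df\Psi_k(x)\,(\widetilde v_n)_k(t,x)$. Applying \cref{L2.2} for each fixed $t\in(0,T)$, with $\widehat\bPsi$ replaced by $w_n(t,\cdot)$ and $\bPhi$ by $\widetilde v_n(t,\cdot)$, and using the eigenvalue equation in the form $(\bLg\bPsi)_k=-(c_k+\lambda)\Psi_k$, one obtains
\[
\bigl(\bLg w_n\bigr)_k+(c_k+\lambda)(w_n)_k\,=\,\Psi_k\bigl(\widetilde\bLg^{\bm\psi}\widetilde v_n\bigr)_k\,=\,\Psi_k\,\partial_t(\widetilde v_n)_k\,=\,\partial_t(w_n)_k\,,
\]
so $w_n$ solves $\partial_t w_n=\bLg w_n+(\bm c+\lambda)w_n$ in $(0,T)\times B_n\times\cS$, with $w_n=0$ on the lateral boundary and $w_n(0,\cdot)=\bPsi\bm g$. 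Applying the It\^o--Krylov formula \cite[p.~122]{Krylov} to $s\mapsto\E^{\int_0^s(\bm c(X_r,S_r)+\lambda)\,\D r}\,w_n(T-s,X_s,S_s)$ on $[0,T\wedge\uptau_n]$ — its bounded-variation part vanishes since $-\partial_t w_n+\bLg w_n+(\bm c+\lambda)w_n=0$, and along the stopped path $w_n$ is evaluated either in the interior of the parabolic cylinder or on $\partial B_n$ at a strictly positive time, where it equals $0$ — gives
\[
w_n(T,x,k)\,=\,\Exp_{x,k}\Bigl[\E^{\int_0^T(\bm c(X_s,S_s)+\lambda)\,\D s}\,\bigl(\bm g\bPsi\bigr)(X_T,S_T)\,\Ind_{\{T<\uptau_n\}}\Bigr]\,.
\]
Since $w_n=\bPsi\,\widetilde v_n$, this is precisely \eqref{EL2.4A} with $\Ind_{\{T<\uptau_\infty\}}$ replaced by $\Ind_{\{T<\widetilde\uptau_n\}}$ and with an extra indicator $\Ind_{\{T<\uptau_n\}}$ inserted on the left.

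It remains to let $n\to\infty$. On the right $\widetilde\uptau_n\uparrow\uptau_\infty$, so $\Ind_{\{T<\widetilde\uptau_n\}}\to\Ind_{\{T<\uptau_\infty\}}$, and dominated convergence with the bound $\norm{\bm g}_\infty$ gives $\widetilde v_n(T,x,k)\to\widetilde\Exp^{\bm\psi}_{x,k}\bigl[\bm g(\Tilde X_T,\Tilde S_T)\Ind_{\{T<\uptau_\infty\}}\bigr]$. On the left, hypotheses \hyperlink{A1}{(A1)}--\hyperlink{A3}{(A3)} make $(X,S)$ conservative, so $\uptau_n\uparrow\infty$ and $\Ind_{\{T<\uptau_n\}}\uparrow1$ a.s.; when $\bm g\ge0$ the integrand is nonnegative, whence by monotone convergence the left side of \eqref{EL2.4A} is the increasing limit of $w_n(T,x,k)=\bPsi(x,k)\widetilde v_n(T,x,k)\le\bPsi(x,k)\norm{\bm g}_\infty$, so it is finite and equals $\bPsi(x,k)\widetilde\Exp^{\bm\psi}_{x,k}\bigl[\bm g(\Tilde X_T,\Tilde S_T)\Ind_{\{T<\uptau_\infty\}}\bigr]$. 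For general $\bm g\in\cC_c(\Rd\times\cS)$ one applies this to the positive and negative parts of each component and uses linearity, which gives \eqref{EL2.4A}.

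The conjugation step is the conceptual heart of the argument: it is where the explicit form \eqref{E-tbLg} of the twisted operator is forced, and where \cref{L2.2} plays the role of Girsanov's theorem. The remaining points — the Feynman--Kac representations under only local boundedness and measurability of $\bm b$ and $\bm c$, well-posedness of the localized martingale problem for the twisted operator whose drift is merely continuous, and the justification of the It\^o--Krylov formula up to $\partial B_n$ — are technical but routine, handled by localization to $B_n$ together with interior $L^p$ parabolic estimates and Harnack's inequality for cooperative systems.
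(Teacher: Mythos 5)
Your proposal is correct and follows essentially the same route as the paper: solve the parabolic Dirichlet problem for the twisted operator on $B_n$, conjugate by $\bPsi$ via \cref{L2.2} to obtain the parabolic equation for $\bLg+\bm c+\lambda$, apply the It\^o--Krylov formula to both processes, and let $n\to\infty$. The only cosmetic difference is the direction of the first step (you define the twisted-process functional probabilistically and identify it with the PDE solution, while the paper solves the PDE via \cite{LSU-86} and then derives the probabilistic representation), and your explicit treatment of the two different exit times and of the sign of $\bm g$ in the limiting step is, if anything, slightly more careful than the paper's appeal to monotone convergence.
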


\begin{proof}
Let $\bm g\in\cC_{c}^{2}(\Rd\times\cS)$ with $\supp{\bm g}\subset\sB\times\cS$
for some ball $\sB\subset\Rd$.
Select $n_{0}\in\NN$ sufficiently large such that
$\sB\times\cS\subset B_n\times\cS$ for all $n\ge n_{0}$.
Now fix any $n\ge n_{0}$.
By \cite[Theorem~1.1, p.~573]{LSU-86}, there exist
$\bm\phi\in\Sob^{1,1,2}\bigl((0,T)\times B_n\times\cS\bigr)$ satisfying 
\begin{equation}\label{EL2.4B}
\begin{aligned}
\frac{\partial{\phi}_{k}}{\partial t}(t, x) - (\widetilde\bLg\bm\phi)_k(x,t) &\,=\, 0\,,
\\
\phi_{k}(x,0) &\,=\, \bm g (x,k)\,,
\\
\phi_{k}(x,t) &\,=\, 0 \quad\text{on\ } \partial{B_n}\times\cS\times [0,T]\,,
\end{aligned}
\end{equation}
for all $k\in\cS$,
where 
\begin{equation}\label{EL2.4C}
\begin{aligned}
(\widetilde\bLg\bm\phi)_k\,=\,(\widetilde\bLg^{\bm\psi}\bm\phi)_k &\,=\, 
\trace\left(a_k\nabla^{2}\phi_{k}\right)
+ \bigl\langle b_k, \nabla \phi_{k}\bigr\rangle
+ 2\langle \grad\psi_k,a_{k}\grad\phi_k\rangle\\
&\mspace{150mu}+ \sum_{j\ne k} m_{kj}\frac{\Psi_{j}}{\Psi_{k}}\bigl(\phi_j - \phi_k\bigr)\,,
\quad \forall\, k\in\cS\,.
\end{aligned}
\end{equation}
By the Gagliardo--Nirenberg--Sobolev inequality, we have
$\bm\phi\in L^{2^{*}}\bigl((0,T)\times B_n\times\cS\bigr)$.
Then considering \cref{EL2.4B} as an equation in $\phi_{k}$, it is clear from 
\cite[Theorem~9.2.5]{WYW-06} and \cite[Theorem~3.4, p.~89]{ALAIN-82}
that $\phi_{k}\in\Sob^{1,2,2^{*}}\bigl((0,T)\times B_n\bigr)$ for all $k\in\cS$.
Repeating the above argument it is easy to see that
$\bm\phi\in\Sob^{1,2,p}\bigl((0,T)\times B_n\times\cS\bigr)
\cap\cC\bigl([0,T]\times\Bar{\sB}_{n}\times\cS\bigr)$, $p\ge d$.
Now using \cref{EL2.4B}, and applying the It\^{o}--Krylov formula \cite[p.~122]{Krylov} on
$\bm\phi(\Tilde X_t,\Tilde S_t, T-t)$, it follows that
\begin{equation}\label{EL2.4D}
\begin{aligned}
\bm\phi(x,k,T) &\,=\, \widetilde\Exp_{x,k}^{\bm\psi}
\left[\bm\phi\bigl(\Tilde X_{T\wedge\uptau_n},
\Tilde S_{T\wedge\uptau_n}, T - T\wedge\uptau_n\bigr)\right] \\
&\,=\, \widetilde\Exp_{x,k}^{\bm\psi} \left[\bm g(\Tilde X_{T},\Tilde S_{T})
\Ind_{\{T < \uptau_n\}}\right]\,.
\end{aligned}
\end{equation}
Rewriting \cref{EL2.4C}, we obtain
\begin{equation}\label{EL2.4E}
\Psi_{k}\widetilde\Lg_{k}\bm\phi \,=\,
\Psi_{k}\bigl[\trace\left(a_k\nabla^{2}\phi_{k}\right)
+ \bigl\langle b_k, \nabla \phi_{k}\bigr\rangle
+ 2\langle \grad\psi_k,a_{k}\grad\phi_k\rangle \bigr]
+ \sum_{j\ne k} m_{kj}\bigl(\Psi_{j}\phi_j - \Psi_{j}\phi_k\bigr)
\end{equation}
for all $k\in\cS$.
Let $\Hat{\phi}_{k} = \phi_{k}\Psi_{k}$. Then, using \cref{L2.1}, we get
\begin{align*}
(\bLg\Hat{\bm\phi})_k \,=\, (-\lambda - c_{k})\phi_{k}\Psi_{k}
+ \Psi_{k}(\widetilde\bLg\bm\phi)_k\,. 
\end{align*}
Therefore, from \cref{EL2.4B}, we deduce that
\begin{equation}\label{EL2.4F}
\begin{aligned}
\frac{\partial{\Hat{\phi}}_{k}}{\partial t}(x,t)
& \,=\,\Psi_{k}(x)(\widetilde\bLg\bm{\phi})_k(x,t) \\
& \,=\, (\bLg\Hat{\bm\phi})_k(x,t)
+ ( c_{k} + \lambda )\Hat{\phi}_{k}(x,t)\,, \quad \forall\, k\in\cS\,.
\end{aligned}
\end{equation}
Thus, since we have $\Hat{\bm\phi}(x,k,0) = \bm g(x,k)\bPsi(x,k)$,
and $\Hat{\bm\phi}(x,k,t) = 0$ on $\partial{B_n}\times\cS\times [0,T]$
for all $k\in\cS$, it follows from \eqref{EL2.4F} by
an application of the It\^{o}--Krylov formula, that
\begin{align}\label{EL2.4G}
\bm\phi(x,k,T)\bPsi(x,k) &\,=\, \Exp_{x,k}\left[\E^{\int_{0}^{T\wedge\uptau_{n}}
\left(\bm c(X_{t},S_{t}) + \lambda \right)\,\D{t}}
\bm\phi(X_{T\wedge\uptau_{n}},S_{T\wedge\uptau_{n}}, T - T\wedge\uptau_{n})
\bPsi(X_{T\wedge\uptau_{n}},S_{T\wedge\uptau_{n}})\right]\nonumber\\
&\,=\, \Exp_{x,k}\left[\E^{\int_{0}^{T}\left(\bm c(X_{t},S_{t}) + \lambda \right)\,\D{t}}
\bm g(X_{T},S_{T})\bPsi(X_{T},S_{T})\Ind_{\{T < \uptau_{n}\}}\right].
\end{align}
Now combining \cref{EL2.4D,EL2.4G,EL2.4E}, we obtain 
\begin{equation}\label{EL2.4H}
\bPsi(x,k)\,\widetilde\Exp_{x,k}^{\bm\psi}
\left[\bm g(\Tilde X_{T},\Tilde S_{T}) \Ind_{\{T < \uptau_n\}}\right]
\,=\, \Exp_{x,k}\left[\E^{\int_{0}^{T}\left(\bm c(X_{t},S_{t}) + \lambda \right)\,\D{t}}
\bm g(X_{T},S_{T})\bPsi(X_{T},S_{T})\Ind_{\{T < \uptau_{n}\}}\right].
\end{equation}
Applying the monotone convergence theorem to take limits as $n\to\infty$ in \cref{EL2.4H},
we obtain \cref{EL2.4A} for $\bm g\in\cC_{c}^{2}(\Rd\times\cS)$.
A standard approximation argument shows that \cref{EL2.4A} also holds
for $\bm g\in\cC_{c}(\Rd\times\cS)$.
This completes the proof. 
\end{proof}


Finally, we prove \cref{T1.9}.
For a ball $\sB$, centered at $0$, we denote by
$\uuptau$ the first hitting time to $\sB\times\cS$.

\begin{proof}[\bf Proof of \cref{T1.9}]
First we show that (a)$\,\Rightarrow\,$(b).
From \cref{T1.5} we know that $(\Tilde{X}, \Tilde{S})$ is recurrent.
Thus for some compactly support $\bm{g}\in C^+_c(\Rd\times\cS)$ we have
\begin{equation*}
\widetilde\Exp_{x, k}
\left[\int_0^\infty \bm{\xi}(\Tilde{X}_t,\Tilde{S}_t)\, \D{t}\right] \,=\, \infty
\end{equation*}
for some $(x, k)$, where 
\begin{equation*}
\xi_k(x) \,\df\, \frac{g_k(x)}{\Psi^*_k(x)}\,, \quad k\in\cS\,.
\end{equation*} 
With no loss of generality we assume that $(x, k)=(0, 1)$. For $\alpha>0$ we define 
$\bm{F}_\alpha(x,i) \df \bm{c}(x,i)+\lamstr-\alpha$ and
\begin{equation*}
\Gamma_\alpha \,\df\,
\Exp_{0,1}\left[\int_0^\infty \E^{\int_0^t \bm{F}_\alpha(X_s, S_s)\, \D{s}} 
g(X_t, S_t)\right]\,.
\end{equation*}
Then as shown in \cite[Lemma~2.7]{ABS19},
we have $\Gamma_\alpha<\infty$ for all $\alpha>0$, and 
using \cref{L2.4}
we get $\Gamma_\alpha\to \infty$ as $\alpha\to 0$. Let 
$\bPhi^\alpha_n\in\cC_0(B_n\times\cS)\cap\Sobl^{2,p}(B_n\times\cS)$ 
be the unique solution to
\begin{equation}\label{ET1.9A}
\Lg_k\bPhi^\alpha_n + F_{\alpha, k} \Phi^\alpha_{n, k} \,=\,  - \Gamma^{-1}_\alpha g_k 
\quad \text{in\ } B_n,
\end{equation}
for all $k$. The existence follows from \cref{LA.3}. Again following the arguments in 
\cite[Lemma~2.7]{ABS19}  and the Harnack inequality in \cite[Theorem~2]{Sirakov},
we can show that the family $\{\bPhi^\alpha_n\}_{n\ge n_0}$ is locally
uniformly bounded in $\Sob^{2,p}$-norm and therefore, we can extract a subsequence 
converging to a positive $\bPhi^\alpha\in\Sobl^{2,p}(\Rd)$, $p>d$, satisfying
\begin{equation}\label{ET1.9B}
\Lg_k\bPhi^\alpha + F_{\alpha, k} \Phi^\alpha_{ k} \,=\,  - \Gamma^{-1}_\alpha g_k \quad 
\text{in\ } \Rd\,,
\end{equation}
for all $k$. In addition, we also have $\bPhi^{\alpha}(0, 1)=1$ for all $\alpha>0$. 
Let $\sB$ be a ball satisfying $\supp(\bm{g})\Subset \sB\times\cS$.
Then applying the It\^{o}--Krylov formula to \eqref{ET1.9A} we obtain
\begin{equation*}
\bPhi^\alpha_n(x, k) \,=\,
\Exp_{x,k}\left[\E^{\int_0^{T\wedge\uuptau}\bm{F}_\alpha(X_t, S_t)\, 
\D{t}} \bPhi^\alpha_n(X_{T\wedge\uuptau}, S_{T\wedge\uuptau})
\Ind_{\{T\wedge\uuptau<\uptau_n\}}\right], \quad \text{for\ } x\in B_n\setminus\Bar\sB\,.
\end{equation*}
As shown in \cite[Lemma~2.7]{ABS19}, we can let $T\to\infty$ first and then $n\to\infty$ to 
arrive at 
\begin{equation}\label{ET1.9C}
\bPhi^\alpha(x, k) \,=\,
\Exp_{x, k}\left[\E^{\int_0^{\uuptau}\bm{F}_\alpha(X_t, S_t)\, \D{t}} 
\bPhi^\alpha(X_{\uuptau}, S_{\uuptau})
\Ind_{\{\uuptau<\infty\}}\right], \quad \text{for\ } x\in \sB^c.
\end{equation}
Since $\bPhi^\alpha(0, 1)=1$, using Harnack's inequality  and the
Sobolev estimate we can extract a subsequence of $\{\bPhi^\alpha\}_{\alpha\in (0, 1)}$.
converging  to $\bPsi^*$ in $\Sobl^{2,p}(\Rd)$ as $\alpha\to 0$. It is then evident from 
\eqref{ET1.9B} that 
\begin{equation*}
(\bLg\bPsi^*)_k + (c_k(x)+\lamstr) \Psi^*_{ k} \,=\,  0 \quad \text{in\ } \Rd\,,\quad 
\forall\, k\in\cS\,,
\end{equation*}
and passing the limit in \eqref{ET1.9C} with the help of monotone convergence theorem we 
obtain 
\begin{equation}\label{ET1.9D}
\bPsi^*(x, k) \,=\,
\Exp_{x, k}\left[\E^{\int_0^{\uuptau}(\bm{c}(X_t, S_t)+\lamstr)\, \D{t}} 
\bPsi^*(X_{\uuptau}, S_{\uuptau})
\Ind_{\{\uuptau<\infty\}}\right], \quad \text{for\ } x\in \sB^c.
\end{equation}
Thus we get \eqref{ET2.7A}. This gives us (b).

Next we show that (b)$\,\Rightarrow\,$(a). Let $\bm{h}\in\sB^+_0(\Rd\times\cS)$, and
suppose, on the contrary, that $\lamstr(\bm{c}+\bm{h})=\lamstr(\bm{c})=\lamstr$.
Let $\widehat\bPsi> 0$ be an principal eigenfunction with potential $\bm{c}+\bm{h}$.
Then 
\begin{equation}\label{ET1.9E}
(\bLg\widehat\bPsi)_k + (c_k + \lamstr)\widehat\Psi_k \,\le\,
(\bLg\widehat\bPsi)_k + (c_k + h_k +\lamstr)\widehat\Psi_k\,=\,  0, \quad \text{in\ } 
\Rd\,, \quad \forall\, k\in\cS\,.
\end{equation}
Let $\sB$ be the ball given in (b) and $\uuptau$ be the first hitting time to $\sB\times\cS$. 
Then using It\^{o}--Krylov formula and Fatou's lemma to \eqref{ET1.9E} we obtain that 
\begin{equation}\label{ET1.9F}
\widehat\bPsi(x, k)\,\ge\, \Exp_{x,k}
\left[\E^{\int_0^{\uuptau}(\bm{c}(X_t,S_t)+\lamstr)\, 
\D{t}} \widehat\bPsi(X_{\uuptau}, S_{\uuptau})
\Ind_{\{\uuptau<\infty\}}\right], \quad \text{for\ } x\in \sB^c,\, k\in\cS\,.
\end{equation}
Define
\begin{equation*}
\kappa \,\df\, \min_{k}\, \min_{\Bar\sB}\frac{\widehat\Psi_k(x)}{\Psi^*_k}.
\end{equation*} 
Then using \eqref{ET1.9D} and \eqref{ET1.9F} we have $\widehat\bPsi\ge \kappa \bPsi^*$
and $\min_{k}\min_{\Bar\sB}(\widehat\Psi_k-\kappa\Psi^*)=0$. 
Since $\bLg(\widehat\bPsi-\kappa\bPsi^*)\le 0$, it follows from the strong maximum principle 
that $\widehat\bPsi=\kappa\bPsi^*$. From \eqref{ET1.9E} this also
gives us $\bm{h}\widehat\bPsi=0$, which contradicts the fact that $\bm{h}\neq 0$. 
Hence $\lamstr(\bm{c}+\bm{h})<\lamstr(\bm{c})$, establishing (a).
\end{proof}

\appendix
\section{The Dirichlet eigenvalue problem in bounded domains}\label{A-Eigen}
In this section we consider the principal eigenvalue problem in a
smooth bounded domain $D\subset\Rd$.
Some of the results obtained below can also
be found in \cite{BS04} (see Theorems~13.1 and 13.2 there).
Without any loss of generality we may assume that $0\in D$.
For this section, the only hypotheses we require
are summarized in the following assumption.

\begin{assumption}\label{AA.1}
The following hold.
\begin{itemize}
\item[(i)]
$\bm a\in\bigl(\cC(\overline{D}\times\cS)\bigr)^{d\times d}$,
and, for some $\Lambda > 0$, we have
\begin{equation*}
\Lambda^{-1} \Id \,\le\, a_{k}(x) \,\le\, \Lambda \Id
\quad\forall\,x\in\Bar D\,,\ \forall\,k\in\cS\,.
\end{equation*}
\item[(ii)]
$\bm b\colon D\times\cS\to\Rd$, $m_{ij}\colon D\to\RR$, $i,j\in\cS$,
and $\bm c\colon D\times\cS\to\RR$ are bounded, Borel measurable functions.
\item[(iii)]
$\bm{M}$ is irreducible in $D$, that is,
\cref{irred} holds.
\end{itemize}
\end{assumption}

For $\lambda\in\RR$, consider the set
\begin{equation*}
\Uppsi_{D}^+(\lambda) \,\df\, \bigl\{\bPhi\in \Sobl^{2,d}(D\times\cS)\cap C(\bar{D}\times\cS)
\,\colon\, \bPhi> 0 \text{\ in\ } D\times\cS\,,
\ (\bcA\bPhi)_{k}(x) + \lambda\bPhi_{k}(x)\,\le \,0
\text{\ in\ }  D \ \ \forall \,  k\in\cS \bigr\}\,.
\end{equation*}
We define the generalized Dirichlet principal eigenvalue $\lambda_D$ of $\bm\cA$
in the domain $D$ by
\begin{equation}\label{EA.1}
\lambda_D \,\df\, \sup\,\bigl\{\lambda\in\RR\,\colon
\Uppsi_{D}^+(\lambda)\ne \varnothing\bigr\}\,.
\end{equation}

\begin{theorem}\label{TA.1}
There exists a unique pair
$(\bm\varphi, \rho)\in \cC_{0}(\overline{D}\times\cS)
\cap\Sobl^{2,p}(D\times\cS)\times\RR$, $p>d$,
satisfying
\begin{equation}\label{ETA.1A}
\begin{split}
\bcA \bm\varphi_{D} &\,=\, -\rho\, \bm\varphi_{D} \quad \mbox{in\ }
D\times\cS\,,\\
\bm\varphi_{D} &\,=\, 0\quad \text{on\ } \partial{D}\times\cS\,,\\
\bm\varphi_{D} &\,>\, 0 \quad \text{in\ } D\times\cS\,.
\end{split}
\end{equation}
In addition $\rho=\lambda_D$.
\end{theorem}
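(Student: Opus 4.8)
The plan is to produce the eigenpair $(\bm\varphi,\rho)$ by a Krein--Rutman argument applied to the resolvent of $\bcA$, and then to identify $\rho$ with $\lambda_D$ via a sliding comparison. First I would fix $\mu>0$ large enough that $c_k(x)-\mu<0$ on $D\times\cS$, which is possible by \cref{AA.1}\,(ii). Since $\bcA-\mu$ is cooperative with nonpositive potential, \cref{L2.1} shows that for every $\bm f\in\bigl(L^d(D)\bigr)^N$ the problem $(\mu-\bcA)\bm u=\bm f$ in $D\times\cS$, $\bm u=0$ on $\partial D\times\cS$, has a unique solution $\bm u=:T_\mu\bm f\in\Sobl^{2,d}(D\times\cS)\cap\cC_0(\overline D\times\cS)$, and that $\bm f\ge0$ forces $T_\mu\bm f\ge0$. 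By the $W^{2,p}$ estimates of \cite{GilTru} and the compact embedding $W^{2,p}(D)\hookrightarrow\cC^1(\overline D)$ for $p>d$, $T_\mu$ is a compact positive operator on the ordered Banach space of continuous functions on $\overline D\times\cS$ that vanish on the boundary and are comparable to $\dist(\cdot,\partial D)$ there (equivalently one invokes the version of the Krein--Rutman theorem valid for cones with empty interior). The crucial point is \emph{strong positivity}: if $\bm f\gneq0$, then the strong maximum principle \hyperlink{P2a}{\rm(P2a)}, together with the irreducibility of $\bm M$ from \cref{AA.1}\,(iii) to propagate positivity across all components, and Hopf's lemma at $\partial D$, show that $T_\mu\bm f$ lies in the quasi-interior of the cone. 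Krein--Rutman then gives that $r\df r(T_\mu)>0$ is the unique eigenvalue of $T_\mu$ admitting a positive eigenfunction, that it is simple, and that the eigenfunction $\bm\varphi$ is unique up to a positive scalar; setting $\rho\df\mu-1/r$ yields a pair $(\bm\varphi,\rho)$ as in \cref{ETA.1A}, unique in the required sense.

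It remains to show $\rho=\lambda_D$. One inequality is immediate: $\bm\varphi\in\Uppsi_D^+(\rho)$ since $\bm\varphi>0$ in $D\times\cS$, $\bm\varphi\in\Sobl^{2,d}(D\times\cS)\cap\cC(\overline D\times\cS)$, and $(\bcA\bm\varphi)_k+\rho\varphi_k=0\le0$; hence $\rho\le\lambda_D$ by \cref{EA.1}. For the reverse inequality, fix $\lambda$ with $\Uppsi_D^+(\lambda)\neq\varnothing$ and $\bPhi\in\Uppsi_D^+(\lambda)$, and argue that $\lambda\le\rho$. Since $\bm\varphi$ vanishes on $\partial D$ and each $\Phi_k$, after moving the cooperative coupling terms to the right-hand side, is a nonnegative $W^{2,p}$ supersolution of a scalar uniformly elliptic equation, Hopf's lemma makes both $\varphi_k$ and $\Phi_k$ comparable to $\dist(\cdot,\partial D)$ near the boundary, so $t^*\df\sup\{t>0:\bPhi\ge t\bm\varphi\text{ in }D\times\cS\}$ is finite and positive, and $\bm w\df\bPhi-t^*\bm\varphi\ge0$ attains the value $0$ of the (extended) ratio $w_k/\varphi_k$ at some $(x_0,k_0)$.

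A direct computation from the two (in)equalities gives
\begin{equation*}
(\bLg\bm w)_{k_0}+(c_{k_0}+\lambda)w_{k_0}\,\le\,t^*(\rho-\lambda)\varphi_{k_0}\qquad\text{in }D\,.
\end{equation*}
Assume $\lambda>\rho$, so the right-hand side is strictly negative. If $x_0\in D$, then $w_{k_0}$ is a nonnegative $W^{2,p}$ supersolution of a scalar equation with strictly negative source vanishing at the interior point $x_0$; comparison on a small ball with the corresponding Dirichlet problem (equivalently, the strong minimum principle) forces $w_{k_0}(x_0)>0$, a contradiction. If $x_0\in\partial D$, then $\Phi_{k_0}(x_0)=t^*\varphi_{k_0}(x_0)=0$ and the equality of the boundary ratios forces $\partial_\nu w_{k_0}(x_0)=0$, which contradicts Hopf's lemma for $w_{k_0}$ unless $w_{k_0}\equiv0$; but if $w_{k_0}\equiv0$ the left-hand side above equals $\sum_{j\ne k_0}m_{k_0j}w_j\ge0$ while the right-hand side is strictly negative, again a contradiction. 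Hence $\lambda\le\rho$ for every admissible $\lambda$, so $\lambda_D\le\rho$, and therefore $\rho=\lambda_D$.

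The main obstacle is the first step: arranging the functional-analytic setup so that Krein--Rutman applies --- the positive cone in $\cC_0(\overline D\times\cS)$ has empty interior, so one must either work in a space adapted to the boundary weight $\dist(\cdot,\partial D)$ or use the empty-interior form of the theorem --- and, above all, verifying strong positivity of $T_\mu$, which is precisely where the strong maximum principle and the irreducibility of $\bm M$ are indispensable and which simultaneously delivers the simplicity of $\rho$ and the strict positivity of every component of $\bm\varphi$. By comparison, the sliding arguments used to identify $\rho$ with $\lambda_D$ are routine once the Hopf-lemma boundary behaviour is in hand.
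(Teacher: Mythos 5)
Your proposal is correct in substance but follows a genuinely different route from the paper's at both stages, so a comparison is in order. For existence, the paper simply shifts the potential to make it nonpositive and invokes Sweers \cite{Sweer} (Remark~1.3 and Corollary~2.1 there), whereas you rebuild the Krein--Rutman argument by hand on the resolvent $T_\mu=(\mu-\bcA)^{-1}$; this is essentially the same mathematics (Sweers' result is itself of Krein--Rutman type), and your discussion of the empty-interior cone and of strong positivity via \hyperlink{P2a}{(P2a)}, irreducibility, and Hopf's lemma identifies the right obstacles. (Minor slip: $T_\mu\bm\varphi=r\bm\varphi$ gives $\bcA\bm\varphi=(\mu-\nicefrac{1}{r})\bm\varphi$, so $\rho=\nicefrac{1}{r}-\mu$, not $\mu-\nicefrac{1}{r}$.) For the identification $\rho=\lambda_D$ and the uniqueness of positive (super)solutions, the paper also runs a sliding argument, but with the opposite normalization and a different tool: it considers $\bm u_t=t\bm\varphi_D-\bm w$, chooses $t$ small so that $\bm u_t\le0$ on a large compact $K\Subset D$, and then kills $\bm u_t^+$ on the thin residual set $D\setminus K$ using the small-measure (ABP-type) maximum principle of \cite[Theorem~1]{Sirakov}, before pushing $t$ up to a critical value and concluding with the strong maximum principle and irreducibility. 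This completely avoids boundary-derivative considerations for the competitor $\bPhi\in\Uppsi_D^+(\lambda)$, which lies only in $\Sobl^{2,d}(D\times\cS)\cap C(\overline D\times\cS)$ and need not vanish on $\partial D$. Your version instead touches from below and resolves the boundary case with Hopf's lemma; this works, but note that the touching point at the boundary is realized only as a liminf of the ratio along a sequence, and $w_{k_0}$ has no classical normal derivative there, so ``$\partial_\nu w_{k_0}(x_0)=0$'' must be replaced by a uniform barrier estimate $w_{k_0}\ge c\,\dist(\cdot,\partial D)$ near $x_0$ (valid for continuous strong supersolutions on smooth domains with bounded coefficients), which contradicts $\varphi_{k_0}\le C\dist(\cdot,\partial D)$ and the vanishing liminf. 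With that fix, and observing as you do that a boundary point with $\Phi_{k_0}(x_0)>0$ cannot produce a finite infimum of the ratio, your argument is complete. The paper's thin-domain device buys a proof with minimal boundary-regularity demands on the supersolution; yours buys a self-contained eigenvalue construction that delivers simplicity of $\rho$ from the outset.
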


\begin{proof}
Since $\bm c$ is bounded, using $\bigl(\norm{\bm c}_{L^\infty(D\times\cS)} - \bm c\bigr)$
as the coefficient of the zeroth order term,
the existence of unique solution
$(\bm\varphi_{D},\rho) \in
\cC_{0}(\overline{D}\times\cS)\cap\Sobl^{2,p}(D\times\cS)\times\RR$, $p>d$, to \cref{ETA.1A}
follows from \cite[Remark~1.3 and Corollary 2.1]{Sweer}.
Uniqueness of $\bm\varphi_{D}$ is of course only up to a multiplicative constant.

We claim that if $\bm w\in \cC(\overline{D}\times\cS)\cap\Sobl^{2,p}(D\times\cS)$,
with $\bm w > 0$ in $D$, satisfies
\begin{equation*}
\bcA \bm w \,\le\,  -\rho\, \bm{w} \text{\ in\ } D\times\cS\,,
\end{equation*}
then $\bm w = t\bm \varphi_{D}$ for some constant $t > 0$.
This clearly implies that $\rho = \lambda_D$.

In order to prove the claim, we define $\bm u_{t} \,\df \, t\bm\varphi_{D} - \bm w$.
Let $K \subset D$ be a compact set such that $|D\setminus K| < \varepsilon$,
for some small number $\varepsilon > 0$.
Then for some suitable choice of $t > 0$ (small enough),
we have $\bm u_{t} \le 0$ in $K\times\cS$.
Also, for all $k\in\cS$ we have
\begin{equation*}
(\bLg \bm u_{t})_k(x) - (c_{k}(x)+\rho)^{-} (\bm{u}_{t})_{k}(x) 
\,\ge\, - (c_{k}(x)+\rho)^{+} (\bm{u}_{t})_{k}(x)\,.
\end{equation*}
Now choosing $\varepsilon$ sufficiently small, and
applying \cite[Theorem~1]{Sirakov} on the domain $D\setminus  K$
we see that
$$\sup_{D\setminus K}\,\max_{k\in\cS}\, (\bm{u}_{t})_{k}
\,\le\, \theta_0\,\sup_{D\setminus K}\,\max_{k\in\cS}\,(\bm{u}_{t})_{k}^{+}$$
for some $\theta_0\in(0,1)$.
This is possible only if $\bm u_t \le 0$ in $D\setminus K$.
Thus, $\bm u_t\le 0$ in $D\times\cS$, and it satisfies
\begin{equation*}
\trace (a_{k}\grad^2 (\bm{u}_{t})_{k})(x) + b_{k}(x)\cdot \grad (\bm{u}_{t})_{k}(x)
- (c_{k}(x) + m_{k,k}(x) + \rho)^{-} (\bm{u}_{t})_{k}(x) \,\ge\,  0\,.
\end{equation*}
Therefore, we must either have $(\bm{u}_{t})_{k}=0$ or $(\bm{u}_{t})_{k}<0$ in $D$
by the strong maximum principle \cite[Theorem~9.6]{GilTru}.
If $(\bm{u}_{t})_{k} < 0$ for some $k\in\cS$, the irreducibility condition in \cref{AA.1}\,(iii)
implies that $(\bm{u}_{t})_{j} < 0$ for all $j\in\cS$.
Thus we either have $\bm u_{t}=0$ or $\bm u_{t} < 0$ in $D\times\cS$.
Suppose that $\bm u_{t} < 0$ in $D\times\cS$. 
Define
\begin{equation*}
\mathfrak{t} \,\df\, \sup\,\{t>0 \,\colon\, \bm u_t < 0 \quad \text{in\ } D\times\cS\}\,.
\end{equation*}
By the above argument, $\mathfrak{t}>0$, and by the strong maximum principle,
we must have either $\bm u_\mathfrak{t}=0$
or $\bm u_\mathfrak{t}< 0$.
If $\bm u_\mathfrak{t} < 0$, then for some $\delta>0$ we have
$\bm u_{\mathfrak{t}+\delta} < 0$ in $K\times\cS$,
and repeating the argument above, we obtain $\bm u_{\mathfrak{t} + \delta} < 0$
in $D\times\cS$.
This contradicts the definition of $\mathfrak{t}$. So the only possibility is
$\bm u_\mathfrak{t} = 0$. This indeed implies that $\rho = \lambda_{D}$
and completes the proof.
\end{proof}

\Cref{TA.2,TA.3} which follow,
concern the strict monotonicity of the principal eigenvalue with respect
to the potential and the domain.
We denote the eigenvalue as $\lambda_D(\bm c)$ when we want to explicitly indicate
its dependence on the potential $\bm c$.

\begin{theorem}\label{TA.2}
If two potentials satisfy $\bm c\lneq \bm c'$,
then $\lambda_D(\bm c) > \lambda_D(\bm c')$.
\end{theorem}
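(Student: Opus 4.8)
The plan is to extract the non-strict monotonicity directly from the variational characterization \eqref{EA.1}, and then to bootstrap it to a strict inequality using the rigidity established within the proof of \cref{TA.1}. First I would apply \cref{TA.1} with the potential $\bm c'$ to obtain the Dirichlet principal eigenpair $(\bPsi', \lambda_D(\bm c'))$, with $\bPsi'\in\cC_0(\overline D\times\cS)\cap\Sobl^{2,p}(D\times\cS)$, $p > d$, $\bPsi' > 0$ in $D\times\cS$, and
\begin{equation*}
(\bLg + \bm c')\bPsi' \,=\, -\lambda_D(\bm c')\,\bPsi' \quad\text{in\ } D\times\cS\,.
\end{equation*}
The decisive (trivial) computation is then
\begin{equation}\label{ETA.2A}
(\bLg + \bm c)\bPsi' + \lambda_D(\bm c')\,\bPsi' \,=\, -(\bm c' - \bm c)\,\bPsi' \quad\text{in\ } D\times\cS\,,
\end{equation}
and since $\bm c\lneq\bm c'$ and $\bPsi' > 0$ in $D\times\cS$, the right-hand side is $\lneq 0$; in particular it is $\le 0$, so $\bPsi'\in\Uppsi_D^+\bigl(\lambda_D(\bm c')\bigr)$ with respect to the potential $\bm c$, and \eqref{EA.1} gives $\lambda_D(\bm c)\ge\lambda_D(\bm c')$.

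To upgrade this to a strict inequality I would argue by contradiction, assuming $\lambda_D(\bm c) = \lambda_D(\bm c') =: \rho$. Then \eqref{ETA.2A} exhibits $\bPsi'$ as a positive function in $\cC(\overline D\times\cS)\cap\Sobl^{2,p}(D\times\cS)$ satisfying $(\bLg + \bm c)\bPsi'\le -\rho\,\bPsi'$ in $D\times\cS$, which is precisely the hypothesis of the uniqueness claim proved inside \cref{TA.1} (applied with the operator $\bLg + \bm c$, whose Dirichlet principal eigenvalue is $\rho$). That claim forces $\bPsi' = t\,\bm\varphi_D$ for some $t > 0$, where $\bm\varphi_D$ is the Dirichlet principal eigenfunction of $\bLg + \bm c$; consequently $(\bLg + \bm c)\bPsi' + \rho\,\bPsi' = 0$ in $D\times\cS$. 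Combined with \eqref{ETA.2A} this gives $(\bm c' - \bm c)\bPsi'\equiv 0$ in $D\times\cS$, which is impossible since $\bm c' - \bm c\gneq 0$ and $\bPsi' > 0$ there. Hence $\lambda_D(\bm c) > \lambda_D(\bm c')$.

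I do not expect a genuine obstacle; the only step calling for attention is verifying that $\bPsi'$ really satisfies the hypotheses of the rigidity statement inside \cref{TA.1} — positivity in $D$, the regularity $\cC(\overline D\times\cS)\cap\Sobl^{2,p}(D\times\cS)$, and the subsolution inequality with the zeroth-order shift being exactly $\rho = \lambda_D(\bm c)$ — all of which come for free from \cref{TA.1} applied to $\bm c'$ together with the standing assumption $\lambda_D(\bm c) = \lambda_D(\bm c')$. If one prefers not to reopen that internal claim, the contradiction can instead be produced by the strong maximum principle: take the largest $t > 0$ with $t\,\bm\varphi_D\le\bPsi'$ in $D\times\cS$, so that $\bm\zeta\df\bPsi' - t\,\bm\varphi_D\ge 0$ vanishes at some interior point and, by \eqref{ETA.2A}, satisfies $(\bLg + \bm c)\bm\zeta + \rho\,\bm\zeta = -(\bm c' - \bm c)\bPsi'\le 0$; applying \cite[Theorem~9.6]{GilTru} componentwise (using $m_{ij}\ge 0$ for $i\ne j$) and then the irreducibility of $\bm M$ in $D$ from \cref{AA.1}\,(iii) forces $\bm\zeta\equiv 0$, and \eqref{ETA.2A} again yields the contradiction.
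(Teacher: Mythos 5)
Your proof is correct and follows essentially the same route as the paper: deduce $\lambda_D(\bm c)\ge\lambda_D(\bm c')$ from the definition \cref{EA.1}, then, assuming equality, invoke the rigidity claim inside the proof of \cref{TA.1} to force $\bPsi'=t\bm\varphi_D$ and derive the contradiction $(\bm c'-\bm c)\bPsi'\equiv 0$. You merely make explicit two steps the paper leaves terse (the computation behind the non-strict inequality and the final contradiction), which is fine.
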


\begin{proof}
Let $\bm\varphi_{\bm c}$ and $\bm\varphi_{\bm c'}$ denote the principal eigenfunctions
corresponding to $\bm c$ and $\bm c'$, respectively.
It is clear from \cref{EA.1} that $\lambda_D(\bm c) \ge \lambda_D(\bm c')$.
Suppose that $\lambda_D(\bm c)=\lambda_D(\bm c')$. Then, we obtain
\begin{equation*}
\bcA\, \bm\varphi_{\bm c'}(x) \,\le\,  -\lambda_{D}(\bm c) \bm\varphi_{\bm c'}(x)\,,
\quad\text{in\ } D\times\cS\,.
\end{equation*}
Now, it follows from the proof of \cref{TA.1}  that
$\bm\varphi_{\bm c'} = t\bm\varphi_{\bm c}$ for some positive constant $t$.
But this contradicts the fact that $\bm c\lneq \bm c'$.
Therefore, we have $\lambda_D(\bm c) > \lambda_D(\bm c')$.
\end{proof}

\begin{theorem}\label{TA.3}
If $D_{1}\subsetneq D_{2}$, then $\lambda_{D_1} > \lambda_{D_2}$.
\end{theorem}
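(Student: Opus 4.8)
The plan is to reduce the strict domain-monotonicity to the rigidity already contained in \cref{TA.1}: any positive $\bm w$ on a domain $D$ satisfying $\bcA\bm w + \lambda_D\bm w\le 0$ must be a positive scalar multiple of the Dirichlet principal eigenfunction $\bm\varphi_D$. The proof thus mirrors that of \cref{TA.2}, with ``enlarging the potential'' replaced by ``shrinking the domain''.

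First I would record the non-strict inequality $\lambda_{D_1}\ge\lambda_{D_2}$. Let $(\bm\varphi_{D_2},\lambda_{D_2})$ be the Dirichlet principal eigenpair of $\bcA$ in $D_2$ furnished by \cref{TA.1}, so that $\bm\varphi_{D_2}\in\cC_0(\overline{D}_2\times\cS)\cap\Sobl^{2,p}(D_2\times\cS)$ for $p>d$, with $\bm\varphi_{D_2}>0$ in $D_2\times\cS$ and $\bcA\bm\varphi_{D_2}=-\lambda_{D_2}\bm\varphi_{D_2}$ in $D_2\times\cS$. Since $D_1\subsetneq D_2$, the restriction $\bm w\df\bm\varphi_{D_2}|_{\overline{D}_1}$ lies in $\cC(\overline{D}_1\times\cS)\cap\Sobl^{2,d}(D_1\times\cS)$, is positive in $D_1\times\cS$, and satisfies $(\bcA\bm w)_k + \lambda_{D_2}w_k = 0\le 0$ in $D_1$ for each $k\in\cS$. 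Hence $\bm w\in\Uppsi^+_{D_1}(\lambda_{D_2})$, and \cref{EA.1} gives $\lambda_{D_1}\ge\lambda_{D_2}$.

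Next, I would argue by contradiction, assuming $\lambda_{D_1}=\lambda_{D_2}$ and writing $\lambda\df\lambda_{D_1}$. Then $\bm w$ is a positive element of $\cC(\overline{D}_1\times\cS)\cap\Sobl^{2,p}(D_1\times\cS)$ with $\bcA\bm w=-\lambda_{D_1}\bm w$, in particular $\bcA\bm w\le-\lambda_{D_1}\bm w$ in $D_1\times\cS$, so the rigidity claim established inside the proof of \cref{TA.1} (applied on $D_1$, with $\rho=\lambda_{D_1}$) forces $\bm w=t\,\bm\varphi_{D_1}$ for some constant $t>0$. To reach a contradiction it suffices to exhibit a point $z\in\partial D_1\cap D_2$: the set $D_2\setminus\overline{D}_1$ is nonempty (otherwise $D_2\subseteq\mathrm{int}(\overline{D}_1)=D_1$, contradicting $D_1\subsetneq D_2$), and any path inside the connected open set $D_2$ joining a point of $D_1$ to a point of $D_2\setminus\overline{D}_1$ must meet $\partial D_1$ at some such $z$. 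At that $z$ we have $\bm\varphi_{D_1}(z)=0$ componentwise by the Dirichlet boundary condition in \cref{TA.1}, whereas $\bm w(z)=\bm\varphi_{D_2}(z)>0$ componentwise since $z\in D_2$. This is incompatible with $\bm w=t\,\bm\varphi_{D_1}$, $t>0$; hence $\lambda_{D_1}>\lambda_{D_2}$.

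I do not anticipate a genuine obstacle: every analytic input is a direct invocation of \cref{TA.1} (and the rigidity argument in its proof), and the only auxiliary ingredient is the elementary topological fact that the boundary of the smaller domain meets the interior of the larger one, which is immediate for the smooth bounded domains under consideration. If one wished to avoid even this geometric observation, an alternative would be to extend $\bm\varphi_{D_1}$ by $0$ to $\overline{D}_2$ and verify that this extension is a viscosity subsolution of $\bcA\cdot+\lambda_{D_1}\cdot\le0$ on $D_2$, but the restriction argument above is cleaner and stays closer in spirit to \cref{TA.2}.
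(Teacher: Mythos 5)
Your proposal is correct and follows essentially the same route as the paper's proof: establish $\lambda_{D_1}\ge\lambda_{D_2}$ from the definition \cref{EA.1} by restricting $\bm\varphi_{D_2}$ to $D_1$, invoke the rigidity claim inside the proof of \cref{TA.1} to force $\bm\varphi_{D_2}=t\,\bm\varphi_{D_1}$ on $D_1$ when the eigenvalues coincide, and derive a contradiction from $\bm\varphi_{D_1}=0$ on $\partial D_1$ versus $\bm\varphi_{D_2}>0$ in $D_2$. Your write-up merely makes explicit the topological fact that $\partial D_1\cap D_2\neq\varnothing$, which the paper leaves implicit.
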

\begin{proof}
Let $\bm \varphi_1$ and $\bm\varphi_2$ denote the principal eigenfunctions corresponding
to $\lambda_{D_1}$ and $\lambda_{D_2}$, respectively.
From the definition in \cref{EA.1}, it follows that $\lambda_{D_1} \ge \lambda_{D_2}$.
If $\lambda_{D_1} = \lambda_{D_2}$, then
\begin{equation*}
\bcA\, \bm\varphi_2 \,\le\,  -\lambda_{D_1} \bm\varphi_2
\quad\text{in\ } D_2\times\cS\,.
\end{equation*}
As in the proof of \cref{TA.1}, this implies $\bm\varphi_2 = t \bm\varphi_1$
on $D_1\times\cS$
for some $t > 0$. This contradicts the fact that $\bm\varphi_2 > 0$ in $D_{2}\times\cS$,
because $D_{1}\subsetneq D_{2}$ and $\bm\varphi_1 = 0$ on $\partial{D}_{1}\times\cS$. 
Thus, we must have $\lambda_{D_1} > \lambda_{D_2}$.
\end{proof}

Next, we address the continuity properties of the principal eigenvalue with respect
to the domain $D$.
We say that a domain $D$ has the \emph{exterior sphere property} of radius
$r>0$, if every point of $\partial D$ can be touched from outside of $D$ with a ball
of radius $r$.
 We need the following
boundary estimate. For a proof, see \cite[Lemma 6.1]{AB-19}.

\begin{lemma}\label{LA.1}
Suppose that $\norm{\bm w}_{L^\infty(D\times\cS)} \le 1$, and it satisfies
\begin{equation*}
\trace (a_{k}\grad^2 w_{k}) + \delta\abs{\grad w_{k}} \,\ge\, L
\text{\ \ in\ } D\quad \forall\, k\in\cS\,, \quad\text{and\ \ } \bm w = 0
\text{\ \ on\ } \partial{D}\times\cS\,,
\end{equation*}
where $D$ has the exterior sphere property of radius $r>0$.
Then for $s\in(0,1)$, there exist constants $M$, and $\varepsilon$,
depending only on
$\delta$, $L$, $r$, and $s$, such that
\begin{equation*}
\max_{k\in\cS}\, w^+_{k}(x)\,\le\, M \dist(x,\partial D)^s, \quad \text{for all $x$ such 
that\ }
\dist(x, \partial D)<\varepsilon\,.
\end{equation*}
\end{lemma}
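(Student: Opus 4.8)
The plan is to prove \cref{LA.1} by a classical barrier argument, carried out componentwise. Since the hypothesis on $w_k$ involves no other component, I fix $k\in\cS$, write $w\df w_k$ and $a\df a_k$ (uniformly elliptic with constant $\Lambda$ as in \cref{AA.1}\,(i)), and take $w\in\Sobl^{2,d}(D)$ so that $\trace(a\grad^2 w)+\delta\abs{\grad w}\ge L$ holds a.e. Rewriting this as $\trace(a\grad^2 w)+\bm b\cdot\grad w\ge L$ with $\bm b(x)\df\delta\,\grad w(x)/\abs{\grad w(x)}$ where $\grad w(x)\ne0$ and $\bm b(x)\df0$ otherwise, we see that $\abs{\bm b}\le\delta$ and $w$ is a subsolution of a uniformly elliptic operator with bounded drift and no zeroth-order term. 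Since the estimate is purely local near $\partial D$, I will work in collars of fixed width $2r$.

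Next I would construct a single radial barrier that serves at every boundary point. Fix $\beta>0$ so large that $(\beta+2)\Lambda^{-1}\ge\Lambda d+2\delta r+1$; this forces $\beta$ of order $\Lambda^2 d$. Given $\xi_0\in\partial D$, pick an exterior ball $B_r(y_0)$ touching $\partial D$ at $\xi_0$; after a translation set $y_0=0$, so that $r\le\abs x\le 2r$ on $\Bar\Omega_0$, where $\Omega_0\df D\cap B_{2r}$. Put $\Phi(x)\df M_0\bigl(r^{-\beta}-\abs x^{-\beta}\bigr)$, with $M_0>0$ to be chosen. A direct computation gives $\grad\Phi=M_0\beta\abs x^{-\beta-2}x$ and, using $\langle a\zeta,\zeta\rangle\ge\Lambda^{-1}\abs\zeta^2$ and $\trace a\le\Lambda d$, $\trace(a\grad^2\Phi)\le-M_0\beta\abs x^{-\beta-2}\bigl[(\beta+2)\Lambda^{-1}-\Lambda d\bigr]$ on $\Bar\Omega_0$; combining this with $\abs x\le 2r$ and the choice of $\beta$ yields
\[
\trace(a\grad^2\Phi)+\delta\abs{\grad\Phi}\,\le\,-M_0\beta\abs x^{-\beta-2}\,\le\,-M_0\beta(2r)^{-\beta-2}\quad\text{in}\ \Omega_0\,.
\]
Now fix $M_0$ large, depending only on $\Lambda,d,\delta,r,L$, so that $-M_0\beta(2r)^{-\beta-2}\le L$ and $M_0\bigl(r^{-\beta}-(2r)^{-\beta}\bigr)\ge1$. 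Since $\Phi\ge0$ on $\Bar B_{2r}\setminus B_r$, $\Phi\ge1$ on $\partial B_{2r}$, $w\le1$ and $w=0$ on $\partial D$, the difference $w-\Phi$ is $\le0$ on $\partial\Omega_0$ and satisfies $\trace(a\grad^2(w-\Phi))+\bm b\cdot\grad(w-\Phi)\ge L-\bigl(\trace(a\grad^2\Phi)+\delta\abs{\grad\Phi}\bigr)\ge0$ in $\Omega_0$; the Aleksandrov--Bakelman--Pucci maximum principle (e.g.\ \cite[Theorem~9.1]{GilTru}) then gives $w\le\Phi$ in $\Omega_0$. Crucially, $\beta$ and $M_0$ do not depend on $\xi_0$.

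Finally I would read off the boundary estimate. For $x_0\in D$ with $d_0\df\dist(x_0,\partial D)<r$, choose a nearest point $\xi_0\in\partial D$ and apply the previous step with the exterior ball at $\xi_0$. By the triangle inequality $\abs{x_0-y_0}\le\abs{x_0-\xi_0}+r=r+d_0<2r$, so $x_0\in\Omega_0$, and since $t\mapsto r^{-\beta}-t^{-\beta}$ is increasing and $1-(1+t)^{-\beta}\le\beta t$,
\[
w(x_0)\,\le\,\Phi(x_0)\,=\,M_0\bigl(r^{-\beta}-\abs{x_0-y_0}^{-\beta}\bigr)\,\le\,M_0\bigl(r^{-\beta}-(r+d_0)^{-\beta}\bigr)\,\le\,M_0\beta r^{-\beta-1}d_0\,.
\]
Setting $M\df M_0\beta r^{-\beta-1}$ and $\varepsilon\df\min\{1,r\}$, and using that $d_0<1$ implies $d_0\le d_0^{\,s}$ for $s\in(0,1)$, we obtain $w_k^+(x_0)\le M\,\dist(x_0,\partial D)^{s}$ whenever $\dist(x_0,\partial D)<\varepsilon$; maximizing over $k\in\cS$ completes the proof. (In fact the barrier gives the stronger Lipschitz bound $w_k^+(x_0)\le M d_0$; the exponent $s<1$ is used only to absorb this linear bound for $d_0<1$.) The one delicate point is the uniform choice of $\beta$: because $\Phi$ is convex its Hessian contributes a positive term $\sim M_0\beta\,d\Lambda\,\abs x^{-\beta-2}$ to $\trace(a\grad^2\Phi)$, which must be dominated by the $(\beta+2)\Lambda^{-1}$ term; once $\beta$ is fixed this way, and because the exterior-ball radius $r$ is the same at every boundary point, all constants are controlled uniformly and everything else is routine bookkeeping.
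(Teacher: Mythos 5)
Your barrier argument is correct: the reduction to a scalar subsolution with bounded drift $\abs{\bm b}\le\delta$, the radial barrier $\Phi(x)=M_0(r^{-\beta}-\abs{x}^{-\beta})$ anchored at the exterior ball, the verification that $\trace(a\grad^2\Phi)+\delta\abs{\grad\Phi}\le L$ once $\beta$ dominates $\Lambda(\Lambda d+2\delta r+1)-2$, and the comparison via ABP all check out, and the final Lipschitz bound $w^+(x_0)\le M\dist(x_0,\partial D)$ indeed implies the stated H\"older bound for $\dist(x_0,\partial D)<1$. The paper itself gives no proof of \cref{LA.1} -- it only cites \cite[Lemma~6.1]{AB-19} -- so your self-contained argument is the standard route one would expect that reference to follow; the only cosmetic discrepancy is that your constants $M,\varepsilon$ also depend on the ellipticity constant $\Lambda$ and the dimension $d$ (via $\beta$ and $M_0$), which the lemma's statement omits from its list but which are fixed ambient data under \cref{AA.1}.
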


\begin{theorem}\label{TA.4}
Let $\{D_n\}_{n\in\NN}$ be a decreasing sequence of smooth domains whose intersection
is denoted as $D$, and which have the exterior sphere property of radius $r$
uniformly in $n\in\NN$. Then $\lambda_{D_n}\to \lambda_D$, as $n\to\infty$.
\end{theorem}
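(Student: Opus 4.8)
The plan is to combine the monotonicity of the Dirichlet principal eigenvalue in the domain, established in \cref{TA.3}, with a compactness argument for the associated eigenfunctions, using \cref{LA.1} to control their boundary behaviour uniformly in $n$. Since $D\subseteq D_{n+1}\subseteq D_n\subseteq D_1$ for every $n$, \cref{TA.3} gives $\lambda_{D_1}\le\lambda_{D_n}\le\lambda_{D_{n+1}}\le\lambda_D$, so that $\lambda_{D_n}$ increases to some limit $\bar\lambda\in(-\infty,\lambda_D]$. It then remains to show $\bar\lambda=\lambda_D$, and by the uniqueness statement in \cref{TA.1} this will follow once we produce a positive function $\bm\varphi\in\cC_{0}(\overline{D}\times\cS)\cap\Sobl^{2,p}(D\times\cS)$, $p>d$, solving $\bcA\bm\varphi=-\bar\lambda\bm\varphi$ in $D\times\cS$.

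Let $\bm\varphi_n$ denote the Dirichlet principal eigenfunction on $D_n$ furnished by \cref{TA.1}, normalised so that $\norm{\bm\varphi_n}_{L^\infty(D_n\times\cS)}=1$. Writing the eigenvalue equation componentwise,
$$\trace\bigl(a_k\grad^2\varphi_{n,k}\bigr)+b_k\cdot\grad\varphi_{n,k}
\,=\,-\bigl(\lambda_{D_n}+c_k+m_{kk}\bigr)\varphi_{n,k}-\sum_{j\ne k}m_{kj}\varphi_{n,j}\,,$$
and using the quasi-monotonicity $m_{kj}\ge0$ for $j\ne k$ together with $0\le\varphi_{n,j}\le1$, the uniform bound $\lambda_{D_1}\le\lambda_{D_n}\le\lambda_D$, and the boundedness of $\bm b,\bm c,\bm M$ on $\overline{D}_1\times\cS$, we obtain a constant $C_0$, \emph{independent of $n$}, such that $\trace(a_k\grad^2\varphi_{n,k})+\delta\abs{\grad\varphi_{n,k}}\ge-C_0$ in $D_n$ for all $k\in\cS$, where $\delta\df\max_{k\in\cS}\sup_{\overline{D}_1}\abs{b_k}$. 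Since the $D_n$ have the exterior sphere property of radius $r$ uniformly in $n$, \cref{LA.1} applied on each $D_n$ (with $\bm\varphi_n$ vanishing on $\partial D_n$) yields, for a fixed $s\in(0,1)$, constants $M,\varepsilon>0$ that do not depend on $n$ such that
$$\max_{k\in\cS}\varphi_{n,k}(x)\,\le\,M\,\dist(x,\partial D_n)^s\qquad\text{whenever }\dist(x,\partial D_n)<\varepsilon\,.$$

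Next I would pass to the limit. On any compact subset of $D$ we have $0\le\bm\varphi_n\le1$, and since the $\bm\varphi_n$ solve an equation with coefficients fixed on $\overline{D}_1$ and right-hand side $-\lambda_{D_n}\bm\varphi_n$ bounded uniformly, standard interior elliptic estimates and a diagonal argument give a subsequence converging in $\Sobl^{2,p}(D\times\cS)\cap\cC^{1,\alpha}_{\text{loc}}(D\times\cS)$ to some $\bm\varphi$ with $0\le\bm\varphi\le1$ and $\bcA\bm\varphi=-\bar\lambda\bm\varphi$ in $D\times\cS$. To see that $\bm\varphi\not\equiv0$, note that by the displayed boundary estimate there is a fixed $\varepsilon_0\in(0,\varepsilon)$ such that $\max_k\varphi_{n,k}<\tfrac12$ on $\{x\in D_n\colon\dist(x,\partial D_n)<\varepsilon_0\}$; hence the normalisation forces points $x_n\in D_n$ with $B_{\varepsilon_0}(x_n)\subseteq D_n$ and $\max_k\varphi_{n,k}(x_n)\ge\tfrac12$. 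Since $B_{\varepsilon_0}(x_n)\subseteq D_m$ for all $m\le n$, any subsequential limit $x_*$ of $\{x_n\}$ satisfies $\overline{B_{\varepsilon_0/2}(x_*)}\subseteq D$, and Harnack's inequality \cite{Sirakov} on a ball about $x_*$ gives $\bm\varphi\ge c>0$ there. Consequently $\bm\varphi>0$ in $D\times\cS$ by the strong maximum principle and the irreducibility of $\bm M$. Finally, for the boundary values, fix $\xi\in\partial D$; as $\bigcap_nD_n=D$ and the sequence is decreasing, $\xi\in\partial D_n$ for all $n$ large, so $\dist(x,\partial D_n)\le\abs{x-\xi}$ for every $x\in D$, and the uniform estimate gives $\max_k\varphi_{n,k}(x)\le M\abs{x-\xi}^s$ for such $n$ whenever $\abs{x-\xi}<\varepsilon$; letting $n\to\infty$ yields $\bm\varphi(x)\to0$ as $x\to\xi$. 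Thus $\bm\varphi$ extends continuously to $\overline{D}\times\cS$ with $\bm\varphi=0$ on $\partial D\times\cS$, so $(\bm\varphi,\bar\lambda)$ is a Dirichlet principal eigenpair on $D$ and \cref{TA.1} gives $\bar\lambda=\lambda_D$, completing the proof.

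The main obstacle is the uniform boundary control. One must verify that the hypotheses of \cref{LA.1} hold with constants independent of $n$ — which is exactly where the cooperative structure and the two-sided bound $\lambda_{D_1}\le\lambda_{D_n}\le\lambda_D$ are used — and then convert the decay estimate, which is naturally expressed in terms of $\dist(\cdot,\partial D_n)$, into one valid in a neighbourhood of $\partial D$; this last conversion rests on the elementary observation that every boundary point of $D$ eventually belongs to $\partial D_n$. A secondary point requiring care is the non-degeneracy of the limiting eigenfunction, which is why we normalise in $L^\infty(D_n\times\cS)$ and combine \cref{LA.1} with Harnack's inequality rather than normalising at a fixed interior point.
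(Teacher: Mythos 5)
Your proof is correct and follows essentially the same route as the paper's: monotone convergence of $\lambda_{D_n}$ via \cref{TA.3}, $L^\infty$-normalisation of the eigenfunctions, the uniform boundary decay from \cref{LA.1} combined with interior elliptic estimates to extract a positive limit eigenpair on $D$, and the uniqueness statement of \cref{TA.1} to identify the limit eigenvalue with $\lambda_D$. You merely supply details the paper leaves implicit (the uniform verification of the hypotheses of \cref{LA.1}, the non-degeneracy of the limit via the interior maximising points, and the observation that each $\xi\in\partial D$ lies in $\partial D_n$ for large $n$), all of which check out.
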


\begin{proof}
From \cref{TA.3} it is clear that $\lambda_{D_n}$ is a increasing sequence
which is bounded above by $\lambda_D$.
Thus, $\lambda_{D_n}$ converges to some number $\Tilde\lambda\le \lambda_D$.
We normalize the eigenfunctions so that
$\norm{\varphi_{D_n}}_{L^\infty(D_{n}\times\cS)} = 1$.
Now, using \cref{LA.1} and the standard interior estimate,
it can be easily seen that the family
$\{\bm\varphi_{D_n}\}$ is equicontinuous and each limit point
$\bm\phi\in \cC(\overline{D}\times\cS)\cap\Sobl^{2,p}(D\times\cS)$ is a
nonnegative solution to
\begin{equation*}
\bcA\, \bm\phi \,=\, -\Tilde{\lambda}\, \bm\phi  \text{\ \ in\ } D\times\cS\,.
\end{equation*}
By the strong maximum principle, we must have $\bm\phi > 0$ in $D\times\cS$.
Thus, the equality $\Tilde\lambda=\lambda_D$ follows from the proof of \cref{TA.1}.
\end{proof}

The next result shows that $\lambda_D$ is convex with respect to the potential $c$.

\begin{lemma}\label{LA.2}
It holds that
\begin{equation}\label{ELA.2A}
\lambda_D(\theta \bm c_{1} + (1-\theta) \bm c_2)
\,\ge\, \theta \lambda_D(\bm c_{1}) + (1-\theta) \lambda_D(\bm c_{2})
\quad\forall\,\theta\in [0,1]\,.
\end{equation}
\end{lemma}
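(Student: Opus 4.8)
The plan is to realize $\lambda_D(\bm c_\theta)$, with $\bm c_\theta\df\theta\bm c_1+(1-\theta)\bm c_2$, as the supremum in \cref{EA.1} and exhibit one admissible competitor, namely the componentwise weighted geometric mean of the two principal eigenfunctions. Concretely, \cref{TA.1} supplies principal eigenpairs $(\bm\varphi_1,\lambda_D(\bm c_1))$ and $(\bm\varphi_2,\lambda_D(\bm c_2))$ with $\bm\varphi_i\in\cC_{0}(\overline D\times\cS)\cap\Sobl^{2,p}(D\times\cS)$, $p>d$, and $\bm\varphi_i>0$ in $D$; I would set $\varphi_k\df\varphi_{1,k}^{\theta}\,\varphi_{2,k}^{1-\theta}$ for $k\in\cS$ and $\bm\varphi\df(\varphi_1,\dotsc,\varphi_N)$. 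Since each $\varphi_{i,k}$ is positive in $D$ and lies in $\Sobl^{2,p}(D)\cap\cC^{1,\alpha}_{\text{loc}}(D)$, composing with the smooth map $t\mapsto t^{\theta}$ on $(0,\infty)$ and applying the product rule give $\bm\varphi\in\Sobl^{2,p}(D\times\cS)$, while continuity on $\overline D$ and the condition $\bm\varphi=0$ on $\partial D\times\cS$ are immediate from the corresponding properties of $\bm\varphi_1,\bm\varphi_2$. Hence it suffices to check the supersolution inequality
\begin{equation*}
\bigl((\bLg+\bm c_\theta)\bm\varphi\bigr)_k + \bigl(\theta\lambda_D(\bm c_1)+(1-\theta)\lambda_D(\bm c_2)\bigr)\varphi_k \,\le\, 0 \qquad\text{a.e.\ in\ }D\,,\ k\in\cS\,,
\end{equation*}
for then $\bm\varphi\in\Uppsi_D^+\bigl(\theta\lambda_D(\bm c_1)+(1-\theta)\lambda_D(\bm c_2)\bigr)$, and \cref{EA.1} yields \cref{ELA.2A}. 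The endpoints $\theta\in\{0,1\}$ are trivial, so assume $\theta\in(0,1)$.

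For the second-order part I would pass to logarithms. Writing $\psi_{i,k}\df\log\varphi_{i,k}$ and recalling $L_k$ from \cref{E-Lk}, the a.e.\ identity $L_k\varphi_{i,k}=\varphi_{i,k}\bigl(L_k\psi_{i,k}+\grad\psi_{i,k}\cdot a_k\grad\psi_{i,k}\bigr)$ holds; since $\log\varphi_k=\theta\psi_{1,k}+(1-\theta)\psi_{2,k}$, linearity of the first-order operator $L_k$ together with convexity of the nonnegative quadratic form $v\mapsto v\cdot a_k v$ gives
\begin{equation*}
\frac{L_k\varphi_k}{\varphi_k}\,=\,L_k\log\varphi_k+\grad\log\varphi_k\cdot a_k\grad\log\varphi_k\,\le\,\theta\,\frac{L_k\varphi_{1,k}}{\varphi_{1,k}}+(1-\theta)\,\frac{L_k\varphi_{2,k}}{\varphi_{2,k}}\,.
\end{equation*}
For the coupling part, for $j\ne k$ one has $\varphi_j/\varphi_k=(\varphi_{1,j}/\varphi_{1,k})^{\theta}(\varphi_{2,j}/\varphi_{2,k})^{1-\theta}$, so the weighted arithmetic--geometric mean inequality $x^\theta y^{1-\theta}\le\theta x+(1-\theta)y$ yields $\varphi_j/\varphi_k\le\theta\,\varphi_{1,j}/\varphi_{1,k}+(1-\theta)\,\varphi_{2,j}/\varphi_{2,k}$. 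This is precisely the step that uses cooperativity: because $m_{kj}\ge0$ for $j\ne k$, multiplying by $m_{kj}$, summing over $j\ne k$, and adding $m_{kk}=-\sum_{j\ne k}m_{kj}$ preserves the inequality, so
\begin{equation*}
\frac{\sum_{j}m_{kj}\varphi_j}{\varphi_k}\,\le\,\theta\,\frac{\sum_{j}m_{kj}\varphi_{1,j}}{\varphi_{1,k}}+(1-\theta)\,\frac{\sum_{j}m_{kj}\varphi_{2,j}}{\varphi_{2,k}}\,.
\end{equation*}

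Finally, adding the two displays and the identity $c_{\theta,k}=\theta c_{1,k}+(1-\theta)c_{2,k}$, and dividing the eigenvalue relations $(\bLg+\bm c_i)\bm\varphi_i=-\lambda_D(\bm c_i)\bm\varphi_i$ by the positive $\varphi_{i,k}$, I would obtain
\begin{equation*}
\frac{\bigl((\bLg+\bm c_\theta)\bm\varphi\bigr)_k}{\varphi_k}\,\le\,\theta\,\frac{\bigl((\bLg+\bm c_1)\bm\varphi_1\bigr)_k}{\varphi_{1,k}}+(1-\theta)\,\frac{\bigl((\bLg+\bm c_2)\bm\varphi_2\bigr)_k}{\varphi_{2,k}}\,=\,-\theta\lambda_D(\bm c_1)-(1-\theta)\lambda_D(\bm c_2)\,,
\end{equation*}
which is exactly the required supersolution inequality. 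I do not expect a genuine obstacle here: the argument is the standard log-convexity / geometric-mean trick, and the only points needing care are the a.e.\ validity of the chain rule for $\log\varphi_{i,k}$ and for the powers $\varphi_{i,k}^{\theta}$ — guaranteed by the local positivity and $\Sobl^{2,p}\cap\cC^{1,\alpha}_{\text{loc}}$ regularity of the Dirichlet eigenfunctions — together with the sign condition $m_{kj}\ge0$, which is the cooperativity hypothesis already in force.
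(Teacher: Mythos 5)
Your proposal is correct and follows essentially the same route as the paper: the paper's proof also takes the componentwise weighted geometric mean $f_k=\varphi_{1,k}^{\theta}\varphi_{2,k}^{1-\theta}$, bounds the coupling terms by Young's (weighted AM--GM) inequality using $m_{kj}\ge0$, bounds the second-order part by the same log-convexity computation you spell out, and concludes from \cref{EA.1}. The only difference is presentational—you make explicit the chain-rule/regularity details that the paper dismisses as "straightforward."
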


\begin{proof}
Let $\bm \varphi_i$ denote the principal eigenfunction with respect
to the potential $\bm c_i$, $i=1,2$. Define
$f_k \,\df\, \varphi_{1, k}^\theta \varphi^{(1-\theta)}_{2, k}$.
Then, by Young's inequality we have
\begin{equation*}
\sum_{j\neq k}m_{kj}\frac{f_{j}}{f_{k}}
\,=\, \sum_{j\neq k}m_{kj}\frac{\varphi_{1, j}^\theta \varphi^{(1-\theta)}_{2, j}}
{\varphi_{1, k}^\theta \varphi^{(1-\theta)}_{2, k}}
\,\le\, \sum_{j\neq k}m_{kj}\left(\theta\frac{\varphi_{1, j}}{\varphi_{1, k}}
+ (1-\theta) \frac{\varphi_{2, j}}{\varphi_{2, k}}\right).
\end{equation*}
Also, it is straightforward to show that
\begin{equation*}
\frac{1}{f_{k}}\trace(a_{k}\grad^2 f_{k})
\,\le\, \frac{\theta}{\varphi_{1,k}}\trace(a_{k}\grad^2 \varphi_{1,k})
+ \frac{(1 - \theta)}{\varphi_{2,k}}\trace(a_{k}\grad^2 \varphi_{2,k})
\quad \forall\,k\in\cS\,.
\end{equation*}
Thus,  we obtain
\begin{equation*}
\frac{1}{f_k} \left(\Lg_k\bm f + \left(\theta c_{1,k}
+ (1-\theta) c_{2,k}\right)f_{k} \right)
\,\le\,  \frac{\theta}{\varphi_{1,k}} \left(\Lg_k\bm\varphi_{1}
+  c_{1,k}\,\varphi_{1,k}\right)
+ \frac{(1-\theta)}{\varphi_{2,k}} \left(\Lg_k\bm\varphi_{2}
+  c_{2,k}\,\varphi_{2,k}\right).
\end{equation*}
for all $k\in\cS$.
Simplifying the above inequality, we obtain
\begin{equation*}
\Lg_k\bm f + \left(\theta \bm c_{1,k} + (1-\theta) \bm c_{2,k}\right)f_{k}
\,\le \, -\bigl(\theta \lambda_D(\bm c_{1}) + (1-\theta) \lambda_D(\bm c_{2})\bigr)
f_{k}\quad\forall\,\, k\in\cS\,.
\end{equation*}
In view of \eqref{EA.1}, this implies \cref{ELA.2A}.
\end{proof}

We conclude the Appendix with the following result.

\begin{lemma}\label{LA.3}
Suppose that $\lambda_D>0$. Then for any $\bm f\lneq 0$ in $D\times\cS$,
there exists a unique positive solution $\bm\varphi$ satisfying
$\bcA \bm\varphi = \bm f$ in $D\times\cS$, with
$\bm\varphi=0$ on $\partial D\times \cS$.
\end{lemma}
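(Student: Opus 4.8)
The plan is to deduce both parts of the lemma from the \emph{generalized maximum principle} for $\bcA$, which is at our disposal precisely because $\lambda_D>0$; existence is then a Fredholm‑alternative argument, and positivity comes from the strong maximum principle together with the irreducibility of $\bm M$. Let $(\bm\varphi_D,\lambda_D)$ be the Dirichlet principal eigenpair of \cref{TA.1}. The maximum principle I would use is: if $\bm w\in\Sobl^{2,d}(D\times\cS)\cap\cC(\Bar D\times\cS)$ satisfies $\bcA\bm w\ge0$ in $D\times\cS$ and $\bm w\le0$ on $\partial D\times\cS$, then $\bm w\le0$ in $D\times\cS$. I would prove it by the very scheme used for the ``claim'' inside the proof of \cref{TA.1}: replacing $\bm w$ by $\bm w-\tau\bm\varphi_D$ makes the inequality strict, $\bcA(\bm w-\tau\bm\varphi_D)\ge\tau\lambda_D\bm\varphi_D>0$, while keeping the boundary values $\le0$, and one then runs the continuation argument in $\tau$, pairing the small‑measure ABP estimate of \cref{L2.1} (\cite[Theorem~1]{Sirakov}) with the scalar strong maximum principle applied to the reduced equation
\begin{equation*}
\trace\bigl(a_k\grad^2 w_k\bigr)+b_k\cdot\grad w_k+\bigl(m_{kk}-c_k^-\bigr)w_k
\,=\,(\bcA\bm w)_k-\sum_{j\ne k}m_{kj}w_j-c_k^+ w_k
\end{equation*}
(note the zeroth‑order coefficient $m_{kk}-c_k^-\le0$, and that when $w_k\le0$ the right‑hand side is $\ge(\bcA\bm w)_k$). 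Applying this maximum principle to $\pm(\bm\varphi_1-\bm\varphi_2)$ immediately gives the uniqueness assertion of the lemma.

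For existence — say for $\bm f\in L^p(D\times\cS)$ with $p>d$, which covers every instance needed in the paper — I would fix $s>\max_{k\in\cS}\norm{c_k}_{L^\infty(D)}$, so that $\bcA-s$ has nonpositive potential. Then, by \cref{L2.1} applied with each $D_i=D$ and zero boundary data together with the interior and boundary $\Sob^{2,p}$ estimates (the domain being smooth and the coefficients bounded), the Dirichlet problem $(\bcA-s)\bm u=\bm g$ in $D\times\cS$, $\bm u=0$ on $\partial D\times\cS$, defines a bounded operator $R_s\colon L^p(D\times\cS)\to\Sob^{2,p}(D\times\cS)\cap\cC_0(\Bar D\times\cS)$. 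Composing with the compact embedding $\Sob^{2,p}(D\times\cS)\hookrightarrow\hookrightarrow L^p(D\times\cS)$, the operator $sR_s$ is compact on $L^p(D\times\cS)$, hence $I+sR_s$ is Fredholm of index $0$. Since $\bm\varphi$ solves $\bcA\bm\varphi=\bm f$ in $D\times\cS$ with $\bm\varphi=0$ on $\partial D\times\cS$ if and only if $(I+sR_s)\bm\varphi=R_s\bm f$, and since $(I+sR_s)\bm\varphi=0$ forces $\bcA\bm\varphi=0$ with zero boundary data, hence $\bm\varphi\equiv0$ by the uniqueness just established, the operator $I+sR_s$ is an isomorphism of $L^p(D\times\cS)$. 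Thus $\bm\varphi\df(I+sR_s)^{-1}(R_s\bm f)$ is the unique solution, and bootstrapping in $\bm\varphi=R_s(\bm f-s\bm\varphi)$ places it in $\Sobl^{2,p}(D\times\cS)\cap\cC_0(\Bar D\times\cS)$.

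It remains to prove positivity. Since $\bcA(-\bm\varphi)=-\bm f\gneq0$ in $D\times\cS$ and $-\bm\varphi=0$ on $\partial D\times\cS$, the maximum principle gives $\bm\varphi\ge0$ in $D\times\cS$. Feeding $\bm\varphi\ge0$ and $\bm f\le0$ into the reduced equation above yields, for each $k$, a scalar inequality $\trace(a_k\grad^2\varphi_k)+b_k\cdot\grad\varphi_k+(m_{kk}-c_k^-)\varphi_k\le0$ with nonpositive zeroth‑order coefficient; by the strong maximum principle each $\varphi_k$ is therefore either identically $0$ or strictly positive in $D$, and they cannot all vanish because $\bm f\not\equiv0$. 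If $\cS_1\df\{k\in\cS\colon\varphi_k\equiv0\}$ were nonempty, the irreducibility of $\bm M$ in \cref{AA.1}\,(iii) would furnish $i\in\cS_1$ and $j\in\cS\setminus\cS_1$ with $\babs{\{x\in D\colon m_{ij}(x)>0\}}>0$; but then, as $\varphi_i\equiv0$, the $i$‑th equation reduces to $f_i=\sum_{l\ne i}m_{il}\varphi_l\ge m_{ij}\varphi_j$, which is strictly positive on a set of positive measure, contradicting $f_i\le0$ and $\varphi_j>0$ in $D$. Hence $\cS_1=\varnothing$ and $\bm\varphi>0$ in $D\times\cS$.

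The step I expect to be the main obstacle is the maximum principle itself: the behaviour near $\partial D$ forces one to run the continuation‑in‑$\tau$ device together with the small‑measure ABP estimate, rather than a purely local strong maximum principle, since the set where $\bm w$ is positive may accumulate on $\partial D$ at points where $\bm w$ vanishes non‑transversally — this is exactly the difficulty already met, and handled, in the proof of \cref{TA.1}. Once that is secured, the remaining ingredients (Fredholm theory for existence, the strong maximum principle and irreducibility for positivity) are routine.
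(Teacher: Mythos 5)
Your proof is correct, but it departs from the paper's route in two places, both substantively. For the refined maximum principle (``$\bcA\bm w\ge0$ in $D\times\cS$ and $\bm w\le0$ on $\partial D\times\cS$ forces $\bm w\le0$''), the paper first proves the Berestycki--Nirenberg--Varadhan-type characterization \eqref{ELA.3A} of $\lambda_D$ as an infimum over sign-changing supersolutions, and the maximum principle drops out immediately: any counterexample $\bm u$ would belong to $F(D\times\cS)$ and certify $\lambda_D\le0$. You instead prove the maximum principle directly by transplanting the continuation-in-$\tau$ device from the proof of \cref{TA.1}, using $\bm w-\tau\bm\varphi_D$ and the small-measure ABP estimate of \cite[Theorem~1]{Sirakov} on $D\setminus K$ to drive $\tau^*$ to zero; this works, is a shade more self-contained for the purposes of \cref{LA.3} alone, but does not produce the characterization \eqref{ELA.3A}, which the paper records anyway as a statement of independent interest. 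For existence, the paper appeals to a monotone iteration, while you set up the Fredholm alternative: $R_s\colon L^p\to\Sob^{2,p}\cap\cC_0$ for $\bcA-s$ with $s>\max_k\norm{c_k}_\infty$ (well-defined by \cref{L2.1} since the shifted potential is nonpositive), compactness of $sR_s$ via $\Sob^{2,p}\hookrightarrow\hookrightarrow L^p$, and injectivity of $I+sR_s$ from the maximum principle already established. Both routes are standard once the maximum principle is secured; the Fredholm route has the minor advantage of not requiring one to construct ordered sub/supersolutions explicitly. Your positivity argument (sign from the maximum principle, then the reduced scalar inequality with zeroth-order coefficient $m_{kk}-c_k^-\le0$ plus the irreducibility of $\bm M$ to rule out partial vanishing) fills in a step the paper leaves implicit, and is correct.
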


\begin{proof}
The proof is quite standard and uses the refined maximum principle. 
The latter follows from
the following characterization of $\lambda_D$.
Let $F(D\times\cS)$ denote the collection of all
functions in $\cC(\Bar D\times\cS)\cap\Sobl^{2,d}(D\times\cS)$
which have non-positive values on $\partial D\times\cS$
and are positive at some point in $D\times\cS$.
Then
\begin{equation}\label{ELA.3A}
\lambda_D \,=\, \inf\,\bigl\{\lambda\in\RR \,\colon \exists\, \bPsi\in F(D\times\cS)
\text{\ such that\ }
\bcA \bPsi + \lambda \bPsi \,\ge \,0 \text{\ in\ }  D\times\cS\bigr\}\,.
\end{equation}
Let $\lambda'$ denote the right hand side of \cref{ELA.3A}. 
It follows from \cref{TA.1}
that $\lambda'\le\lambda_D$. Now suppose that for some $\bPsi\in F(D\times \cS)$ we have
\begin{equation*}
\bcA\, \bPsi \,\ge \,-\lambda\,\bPsi
\end{equation*}
in $D\times\cS$ with $\lambda < \lambda_D$.
Since $\bm\varphi_{D}$ in \cref{TA.1} is a super-solution
to $\bcA + \lambda$, repeating a similar argument as in \cref{TA.1},
we see that $\bm\varphi_{D} = t\bPsi$ for some $t>0$ which contradicts
the fact that
$\lambda < \lambda_D$. Thus $\lambda' = \lambda_D$.

As a consequence of the above characterization we have a maximum principle
which can be stated as follows:
if $\bm u$ is a solution to $\cA \bm u\ge 0$ and $\bm u\le 0$ on $\partial D\times \cS$,
then  $\bm u\le 0$ in $D\times\cS$.
Now it is standard to apply a monotone iteration to find a solution
$\bm\varphi$ as stated in the lemma.
\end{proof}

\subsection*{Acknowledgement}
The authors would like to thank Anindya Goswami for the helpful discussions.
The research of Ari Arapostathis was supported
in part by the National Science Foundation through grant DMS-1715210,
in part by the Army Research Office through grant W911NF-17-1-001,
and in part by Office of Naval Research through grant N00014-16-1-2956
and was approved for public release under DCN\# 43-7339-20.
The research of Anup Biswas was supported in part by a SwarnaJayanti fellowship and DST-SERB grants EMR/2016/004810, 
MTR/2018/000028.


\begin{bibdiv}
\begin{biblist}

\bib{Merton}{book}{
      author={Merton, R.~C.},
       title={Continuous-time finance},
   publisher={Blackwell, Cambridge},
     address={Cambridge},
        date={1990},
}

\bib{BDY09}{article}{
      author={Bercu, Bernard},
      author={Dufour, Fran\c{c}ois},
      author={Yin, G.~George},
       title={Almost sure stabilization for feedback controls of
  regime-switching linear systems with a hidden {M}arkov chain},
        date={2009},
        ISSN={0018-9286},
     journal={IEEE Trans. Automat. Control},
      volume={54},
      number={9},
       pages={2114\ndash 2125},
      review={\MR{2567938}},
}

\bib{DKR94}{article}{
      author={Di~Mazi, G.~B.},
      author={Kabanov, Yu.~M.},
      author={Runggal\cprime~der, V.~I.},
       title={Mean-square hedging of options on a stock with {M}arkov
  volatilities},
        date={1994},
        ISSN={0040-361X},
     journal={Teor. Veroyatnost. i Primenen.},
      volume={39},
      number={1},
       pages={211\ndash 222},
      review={\MR{1348196}},
}

\bib{ZY03}{article}{
      author={Zhou, Xun~Yu},
      author={Yin, G.},
       title={Markowitz's mean-variance portfolio selection with regime
  switching: a continuous-time model},
        date={2003},
        ISSN={0363-0129},
     journal={SIAM J. Control Optim.},
      volume={42},
      number={4},
       pages={1466\ndash 1482},
      review={\MR{2044805}},
}

\bib{YKI04}{article}{
      author={Yin, G.},
      author={Krishnamurthy, Vikram},
      author={Ion, Cristina},
       title={Regime switching stochastic approximation algorithms with
  application to adaptive discrete stochastic optimization},
        date={2004},
        ISSN={1052-6234},
     journal={SIAM J. Optim.},
      volume={14},
      number={4},
       pages={1187\ndash 1215},
      review={\MR{2112970}},
}

\bib{SZ94}{book}{
      author={Sethi, Suresh~P.},
      author={Zhang, Qing},
       title={Hierarchical decision making in stochastic manufacturing
  systems},
      series={Systems \& Control: Foundations \& Applications},
   publisher={Birkh\"{a}user Boston, Inc., Boston, MA},
        date={1994},
        ISBN={0-8176-3735-4},
      review={\MR{1301778}},
}

\bib{GHM10}{article}{
      author={Gershgorin, B.},
      author={Harlim, J.},
      author={Majda, A.~J.},
       title={Test models for improving filtering with model errors through
  stochastic parameter estimation},
        date={2010},
        ISSN={0021-9991},
     journal={J. Comput. Phys.},
      volume={229},
      number={1},
       pages={1\ndash 31},
      review={\MR{2558902}},
}

\bib{MH12}{book}{
      author={Majda, Andrew~J.},
      author={Harlim, John},
       title={Filtering complex turbulent systems},
   publisher={Cambridge University Press, Cambridge},
        date={2012},
        ISBN={978-1-107-01666-8},
      review={\MR{2934167}},
}

\bib{YZ10}{book}{
      author={Yin, G.~George},
      author={Zhu, Chao},
       title={Hybrid switching diffusions},
      series={Stochastic Modelling and Applied Probability},
   publisher={Springer, New York},
        date={2010},
      volume={63},
        ISBN={978-1-4419-1104-9},
        note={Properties and applications},
      review={\MR{2559912}},
}

\bib{Amann04}{incollection}{
      author={Amann, H.},
       title={Maximum principles and principal eigenvalues},
        date={2005},
   booktitle={Ten mathematical essays on approximation in analysis and
  topology},
   publisher={Elsevier B. V., Amsterdam},
       pages={1\ndash 60},
      review={\MR{2162975}},
}

\bib{BMS99}{article}{
      author={Birindelli, I.},
      author={Mitidieri, \`E.},
      author={Svirs, G.},
       title={Existence of the principal eigenvalue for cooperative elliptic
  systems in a general domain},
        date={1999},
        ISSN={0374-0641},
     journal={Differ. Uravn.},
      volume={35},
      number={3},
       pages={325\ndash 333, 429},
      review={\MR{1726799}},
}

\bib{Cantrell86}{article}{
      author={Cantrell, Robert~Stephen},
      author={Schmitt, Klaus},
       title={On the eigenvalue problem for coupled elliptic systems},
        date={1986},
        ISSN={0036-1410},
     journal={SIAM J. Math. Anal.},
      volume={17},
      number={4},
       pages={850\ndash 862},
      review={\MR{846393}},
}

\bib{Cantrell88}{incollection}{
      author={Cantrell, Robert~Stephen},
       title={On the simplicity and uniqueness of positive eigenvalues
  admitting positive eigenfunctions for weakly coupled elliptic systems},
        date={1988},
      volume={18},
       pages={269\ndash 275},
        note={Nonlinear Partial Differential Equations Conference (Salt Lake
  City, UT, 1986)},
      review={\MR{951937}},
}

\bib{Hess83}{article}{
      author={Hess, Peter},
       title={On the eigenvalue problem for weakly coupled elliptic systems},
        date={1983},
     journal={Arch. Rational Mech. Anal.},
      volume={81},
      number={2},
       pages={151–159},
      review={\MR{682266}},
}

\bib{Sweer}{article}{
      author={Sweers, Guido},
       title={Strong positivity in {$C(\overline\Omega)$} for elliptic
  systems},
        date={1992},
        ISSN={0025-5874},
     journal={Math. Z.},
      volume={209},
      number={2},
       pages={251\ndash 271},
      review={\MR{1147817}},
}

\bib{ABS19}{article}{
      author={Arapostathis, Ari},
      author={Biswas, Anup},
      author={Saha, Subhamay},
       title={Strict monotonicity of principal eigenvalues of elliptic
  operators in {$\mathbb{R}^d$} and risk-sensitive control},
        date={2019},
     journal={J. Math. Pures Appl. (9)},
      volume={124},
       pages={169\ndash 219},
      review={\MR{3926044}},
}

\bib{FM95}{article}{
      author={Fleming, Wendell~H.},
      author={McEneaney, William~M.},
       title={Risk-sensitive control on an infinite time horizon},
        date={1995},
        ISSN={0363-0129},
     journal={SIAM J. Control Optim.},
      volume={33},
      number={6},
       pages={1881\ndash 1915},
      review={\MR{1358100}},
}

\bib{DV76}{article}{
      author={Donsker, M.~D.},
      author={Varadhan, S. R.~S.},
       title={On the principal eigenvalue of second-order elliptic differential
  operators},
        date={1976},
        ISSN={0010-3640},
     journal={Comm. Pure Appl. Math.},
      volume={29},
      number={6},
       pages={595\ndash 621},
      review={\MR{0425380}},
}

\bib{DVIII}{article}{
      author={Donsker, M.~D.},
      author={Varadhan, S. R.~S.},
       title={Asymptotic evaluation of certain {M}arkov process expectations
  for large time. {III}},
        date={1976},
        ISSN={0010-3640},
     journal={Comm. Pure Appl. Math.},
      volume={29},
      number={4},
       pages={389\ndash 461},
      review={\MR{0428471}},
}

\bib{Kaise-06}{article}{
      author={Kaise, Hidehiro},
      author={Sheu, Shuenn-Jyi},
       title={On the structure of solutions of ergodic type {B}ellman equation
  related to risk-sensitive control},
        date={2006},
     journal={Ann. Probab.},
      volume={34},
      number={1},
       pages={284\ndash 320},
  url={https://doi-org.ezproxy.lib.utexas.edu/10.1214/009117905000000431},
      review={\MR{2206349}},
}

\bib{Girardin}{article}{
      author={Girardin, L\'{e}o},
       title={Non-cooperative {F}isher-{KPP} systems: traveling waves and
  long-time behavior},
        date={2018},
        ISSN={0951-7715},
     journal={Nonlinearity},
      volume={31},
      number={1},
       pages={108\ndash 164},
         url={https://doi.org/10.1088/1361-6544/aa8ca7},
      review={\MR{3746634}},
}

\bib{ABG19}{article}{
      author={Arapostathis, Ari},
      author={Biswas, Anup},
      author={Ganguly, Debdip},
       title={Certain {L}iouville properties of eigenfunctions of elliptic
  operators},
        date={2019},
        ISSN={0002-9947},
     journal={Trans. Amer. Math. Soc.},
      volume={371},
      number={6},
       pages={4377\ndash 4409},
         url={https://doi.org/10.1090/tran/7694},
      review={\MR{3917226}},
}

\bib{Ichihara-11}{article}{
      author={Ichihara, Naoyuki},
       title={Recurrence and transience of optimal feedback processes
  associated with {B}ellman equations of ergodic type},
        date={2011},
        ISSN={0363-0129},
     journal={SIAM J. Control Optim.},
      volume={49},
      number={5},
       pages={1938\ndash 1960},
      review={\MR{2837506}},
}

\bib{Ichihara-15}{article}{
      author={Ichihara, Naoyuki},
       title={The generalized principal eigenvalue for
  {H}amilton-{J}acobi-{B}ellman equations of ergodic type},
        date={2015},
        ISSN={0294-1449},
     journal={Ann. Inst. H. Poincar\'{e} Anal. Non Lin\'{e}aire},
      volume={32},
      number={3},
       pages={623\ndash 650},
      review={\MR{3353703}},
}

\bib{Berestycki-15}{article}{
      author={Berestycki, Henri},
      author={Rossi, Luca},
       title={Generalizations and properties of the principal eigenvalue of
  elliptic operators in unbounded domains},
        date={2015},
     journal={Comm. Pure Appl. Math.},
      volume={68},
      number={6},
       pages={1014\ndash 1065},
         url={https://doi-org.ezproxy.lib.utexas.edu/10.1002/cpa.21536},
      review={\MR{3340379}},
}

\bib{AB18}{article}{
      author={Arapostathis, Ari},
      author={Biswas, Anup},
       title={Infinite horizon risk-sensitive control of diffusions without any
  blanket stability assumptions},
        date={2018},
        ISSN={0304-4149},
     journal={Stochastic Process. Appl.},
      volume={128},
      number={5},
       pages={1485\ndash 1524},
         url={https://doi.org/10.1016/j.spa.2017.08.001},
      review={\MR{3780687}},
}

\bib{Hasminski-60}{article}{
      author={Has$'$minski\u{\i}, R.~Z.},
       title={Ergodic properties of recurrent diffusion processes and
  stabilization of the solution of the {C}auchy problem for parabolic
  equations},
        date={1960},
     journal={Theory Probab. Appl.},
      volume={2},
       pages={179\ndash 196},
      review={\MR{0133871}},
}

\bib{GA99}{article}{
      author={Grigor\cprime~yan, Alexander},
       title={Analytic and geometric background of recurrence and non-explosion
  of the {B}rownian motion on {R}iemannian manifolds},
        date={1999},
        ISSN={0273-0979},
     journal={Bull. Amer. Math. Soc. (N.S.)},
      volume={36},
      number={2},
       pages={135\ndash 249},
      review={\MR{1659871}},
}

\bib{PRS05}{article}{
      author={Pigola, Stefano},
      author={Rigoli, Marco},
      author={Setti, Alberto~G.},
       title={Maximum principles on {R}iemannian manifolds and applications},
        date={2005},
        ISSN={0065-9266},
     journal={Mem. Amer. Math. Soc.},
      volume={174},
      number={822},
       pages={x+99},
      review={\MR{2116555}},
}

\bib{XZ18}{article}{
      author={Xi, Fubao},
      author={Zhu, Chao},
       title={On the martingale problem and {F}eller and strong {F}eller
  properties for weakly coupled {L}\'{e}vy type operators},
        date={2018},
        ISSN={0304-4149},
     journal={Stochastic Process. Appl.},
      volume={128},
      number={12},
       pages={4277\ndash 4308},
         url={https://doi.org/10.1016/j.spa.2018.02.005},
      review={\MR{3906985}},
}

\bib{Komatsu}{article}{
      author={Komatsu, Takashi},
       title={Markov processes associated with certain integro-differential
  operators},
        date={1973},
        ISSN={0388-0699},
     journal={Osaka Math. J.},
      volume={10},
       pages={271\ndash 303},
         url={http://projecteuclid.org/euclid.ojm/1200694303},
      review={\MR{359017}},
}

\bib{DWS75}{article}{
      author={Stroock, Daniel~W.},
       title={Diffusion processes associated with {L}\'{e}vy generators},
        date={1975},
     journal={Z. Wahrscheinlichkeitstheorie und Verw. Gebiete},
      volume={32},
      number={3},
       pages={209\ndash 244},
         url={https://doi.org/10.1007/BF00532614},
      review={\MR{433614}},
}

\bib{book}{book}{
      author={Arapostathis, A.},
      author={Borkar, V.~S.},
      author={Ghosh, M.~K.},
       title={Ergodic control of diffusion processes},
      series={Encyclopedia of Mathematics and its Applications},
   publisher={Cambridge University Press},
     address={Cambridge},
        date={2012},
      volume={143},
      review={\MR{2884272}},
}

\bib{Pinsky}{book}{
      author={Pinsky, Ross~G.},
       title={Positive harmonic functions and diffusion},
      series={Cambridge Studies in Advanced Mathematics},
   publisher={Cambridge University Press, Cambridge},
        date={1995},
      volume={45},
        ISBN={0-521-47014-5},
      review={\MR{1326606}},
}

\bib{PW66}{article}{
      author={Protter, M.~H.},
      author={Weinberger, H.~F.},
       title={On the spectrum of general second order operators},
        date={1966},
        ISSN={0002-9904},
     journal={Bull. Amer. Math. Soc.},
      volume={72},
       pages={251\ndash 255},
      review={\MR{190527}},
}

\bib{N84}{article}{
      author={Nussbaum, Roger~D.},
       title={Positive operators and elliptic eigenvalue problems},
        date={1984},
        ISSN={0025-5874},
     journal={Math. Z.},
      volume={186},
      number={2},
       pages={247\ndash 264},
      review={\MR{741305}},
}

\bib{NP92}{incollection}{
      author={Nussbaum, Roger~D.},
      author={Pinchover, Yehuda},
       title={On variational principles for the generalized principal
  eigenvalue of second order elliptic operators and some applications},
        date={1992},
      volume={59},
       pages={161\ndash 177},
        note={Festschrift on the occasion of the 70th birthday of Shmuel
  Agmon},
      review={\MR{1226957}},
}

\bib{Berestycki-94}{article}{
      author={Berestycki, H.},
      author={Nirenberg, L.},
      author={Varadhan, S. R.~S.},
       title={The principal eigenvalue and maximum principle for second-order
  elliptic operators in general domains},
        date={1994},
        ISSN={0010-3640},
     journal={Comm. Pure Appl. Math.},
      volume={47},
      number={1},
       pages={47\ndash 92},
      review={\MR{1258192}},
}

\bib{Pinsky-85}{article}{
      author={Pinsky, Ross~G.},
       title={On the convergence of diffusion processes conditioned to remain
  in a bounded region for large time to limiting positive recurrent diffusion
  processes},
        date={1985},
        ISSN={0091-1798},
     journal={Ann. Probab.},
      volume={13},
      number={2},
       pages={363\ndash 378},
      review={\MR{781410}},
}

\bib{Champagnat-16}{article}{
      author={Champagnat, Nicolas},
      author={Villemonais, Denis},
       title={Exponential convergence to quasi-stationary distribution and
  {$Q$}-process},
        date={2016},
        ISSN={0178-8051},
     journal={Probab. Theory Related Fields},
      volume={164},
      number={1-2},
       pages={243\ndash 283},
      review={\MR{3449390}},
}

\bib{Champagnat-17}{article}{
      author={Champagnat, Nicolas},
      author={Villemonais, Denis},
       title={Uniform convergence to the {$Q$}-process},
        date={2017},
        ISSN={1083-589X},
     journal={Electron. Commun. Probab.},
      volume={22},
       pages={Paper No. 33, 7},
      review={\MR{3663104}},
}

\bib{CH15}{article}{
      author={Cloez, Bertrand},
      author={Hairer, Martin},
       title={Exponential ergodicity for {M}arkov processes with random
  switching},
        date={2015},
        ISSN={1350-7265},
     journal={Bernoulli},
      volume={21},
      number={1},
       pages={505\ndash 536},
         url={https://doi.org/10.3150/13-BEJ577},
      review={\MR{3322329}},
}

\bib{HMS11}{article}{
      author={Hairer, M.},
      author={Mattingly, J.~C.},
      author={Scheutzow, M.},
       title={Asymptotic coupling and a general form of {H}arris' theorem with
  applications to stochastic delay equations},
        date={2011},
        ISSN={0178-8051},
     journal={Probab. Theory Related Fields},
      volume={149},
      number={1-2},
       pages={223\ndash 259},
         url={https://doi.org/10.1007/s00440-009-0250-6},
      review={\MR{2773030}},
}

\bib{XZ06}{article}{
      author={Xi, Fubao},
      author={Zhao, Liqin},
       title={On the stability of diffusion processes with state-dependent
  switching},
        date={2006},
        ISSN={1006-9283},
     journal={Sci. China Ser. A},
      volume={49},
      number={9},
       pages={1258\ndash 1274},
      review={\MR{2284209}},
}

\bib{TM16}{article}{
      author={Tong, Xin~T.},
      author={Majda, Andrew~J.},
       title={Moment bounds and geometric ergodicity of diffusions with random
  switching and unbounded transition rates},
        date={2016},
        ISSN={2522-0144},
     journal={Res. Math. Sci.},
      volume={3},
       pages={Paper No. 41, 33},
      review={\MR{3572549}},
}

\bib{Agmon83}{incollection}{
      author={Agmon, Shmuel},
       title={On positivity and decay of solutions of second order elliptic
  equations on {R}iemannian manifolds},
        date={1983},
   booktitle={Methods of functional analysis and theory of elliptic equations
  ({N}aples, 1982)},
   publisher={Liguori, Naples},
       pages={19\ndash 52},
      review={\MR{819005}},
}

\bib{Pinchover-88}{article}{
      author={Pinchover, Yehuda},
       title={On positive solutions of second-order elliptic equations,
  stability results, and classification},
        date={1988},
        ISSN={0012-7094},
     journal={Duke Math. J.},
      volume={57},
      number={3},
       pages={955\ndash 980},
      review={\MR{975130}},
}

\bib{Pinchover-89}{article}{
      author={Pinchover, Y.},
       title={Criticality and ground states for second-order elliptic
  equations},
        date={1989},
        ISSN={0022-0396},
     journal={J. Differential Equations},
      volume={80},
      number={2},
       pages={237\ndash 250},
         url={http://dx.doi.org/10.1016/0022-0396(89)90083-1},
      review={\MR{1011149}},
}

\bib{AGM93}{article}{
      author={Ghosh, Mrinal~K.},
      author={Arapostathis, Aristotle},
      author={Marcus, Steven~I.},
       title={Optimal control of switching diffusions with application to
  flexible manufacturing systems},
        date={1993},
        ISSN={0363-0129},
     journal={SIAM J. Control Optim.},
      volume={31},
      number={5},
       pages={1183\ndash 1204},
      review={\MR{1233999}},
}

\bib{Krylov}{book}{
      author={Krylov, N.~V.},
       title={Controlled diffusion processes},
      series={Applications of Mathematics},
   publisher={Springer-Verlag, New York-Berlin},
        date={1980},
      volume={14},
        ISBN={0-387-90461-1},
      review={\MR{601776}},
}

\bib{Sirakov}{article}{
      author={Sirakov, Boyan},
       title={Some estimates and maximum principles for weakly coupled systems
  of elliptic {PDE}},
        date={2009},
        ISSN={0362-546X},
     journal={Nonlinear Anal.},
      volume={70},
      number={8},
       pages={3039\ndash 3046},
      review={\MR{2509390}},
}

\bib{GilTru}{book}{
      author={Gilbarg, David},
      author={Trudinger, Neil~S.},
       title={Elliptic partial differential equations of second order},
     edition={Second},
      series={Grundlehren der Mathematischen Wissenschaften},
   publisher={Springer-Verlag, Berlin},
        date={1983},
      volume={224},
      review={\MR{737190}},
}

\bib{BS04}{article}{
      author={Busca, J\'{e}r\^{o}me},
      author={Sirakov, Boyan},
       title={Harnack type estimates for nonlinear elliptic systems and
  applications},
        date={2004},
        ISSN={0294-1449},
     journal={Ann. Inst. H. Poincar\'{e} Anal. Non Lin\'{e}aire},
      volume={21},
      number={5},
       pages={543\ndash 590},
      review={\MR{2086750}},
}

\bib{Chan99}{article}{
      author={Chan, Terence},
       title={Pricing contingent claims on stocks driven by {L}\'{e}vy
  processes},
        date={1999},
        ISSN={1050-5164},
     journal={Ann. Appl. Probab.},
      volume={9},
      number={2},
       pages={504\ndash 528},
         url={https://doi.org/10.1214/aoap/1029962753},
      review={\MR{1687394}},
}

\bib{LSU-86}{book}{
      author={Lady\v{z}enskaja, O.~A.},
      author={Solonnikov, V.~A.},
      author={Ural\cprime~ceva, N.~N.},
       title={Linear and quasilinear equations of parabolic type},
      series={Translations of Mathematical Monographs, Vol. 23},
   publisher={American Mathematical Society, Providence, R.I.},
        date={1968},
      review={\MR{0241822}},
}

\bib{WYW-06}{book}{
      author={Wu, Zhuoqun},
      author={Yin, Jingxue},
      author={Wang, Chunpeng},
       title={Elliptic \& parabolic equations},
   publisher={World Scientific Publishing Co. Pte. Ltd., Hackensack, NJ},
        date={2006},
        ISBN={981-270-026-9},
      review={\MR{2309679}},
}

\bib{ALAIN-82}{book}{
      author={Bensoussan, A.},
       title={Stochastic control by functional analysis methods},
      series={Studies in Mathematics and its Applications},
   publisher={North-Holland Publishing Company},
     address={Amsterdam},
        date={1982},
      volume={11},
        ISBN={0-444-86329-X},
      review={\MR{652685}},
}

\bib{AB-19}{article}{
      author={Arapostathis, A.},
      author={Biswas, A.},
       title={Risk-sensitive control for a class of diffusions with jumps},
        date={2019},
     journal={ArXiv e-prints},
      volume={1910.05004},
      eprint={https://arxiv.org/abs/1910.05004},
}

\end{biblist}
\end{bibdiv}

\end{document}